\documentclass[reqno]{amsart}

\usepackage{enumerate}
\usepackage[colorlinks=true]{hyperref}
\hypersetup{citecolor=blue}

\numberwithin{equation}{section}

\newtheorem{thm}{Theorem}[section]
\newtheorem{lem}[thm]{Lemma}
\newtheorem{prop}[thm]{Proposition}
\newtheorem{cor}[thm]{Corollary}
\newtheorem{defn}[thm]{Definition}
\theoremstyle{definition}
\newtheorem{rem}[thm]{Remark}

\newcommand\R{{\mathbb R}}

\newcommand\N{{\mathbb N}}

\newcommand\Tma{T_{\mathrm{max}}}
\newcommand\Comp{{\mathrm{c}}}
\newcommand\Eqdef{\stackrel{\text{\tiny def}}{=}}
\newcommand\Ens{{\mathcal E}}
\newcommand\Map{{\mathcal F}}

\newcommand\dist{{\mathrm{d}}}

\newcommand\Cz{{C_0(\R^N )}}

\newcommand\Czo{{C_0(\Omega )}}

\newcommand\Md{{\mathcal M}}
\newcommand\esup{\mathop{\mathrm{ess\,sup}}}
\newcommand\Cbu{{C_{\mathrm{b, u}}(\R^N )}}

\newcommand\Loc{{\mathrm{loc}}}

\newcommand\goto{\mathop{\longrightarrow}}

\newcommand\MScN[1]{\href{http://www.ams.org/mathscinet-getitem?mr=#1}{\nolinkurl{(#1)}}}
\newcommand\DOI[1]{\href{http://dx.doi.org/#1}{(doi: \nolinkurl{#1})}}
\newcommand\LINK[1]{\href{#1}{(link: \nolinkurl{#1})}}

\newcommand\DI{u_0 }
\newcommand\DIb{v_0 }
\newcommand\DIbd{w_0 }

\newcommand\MUU{\mu }

\newcommand\CA[1]{A_{#1} }

\begin{document}

\title{Perturbations of self-similar solutions}

\def\shorttitle{Perturbations of self-similar solutions}

\author[T. Cazenave]{Thierry Cazenave$^1$}
\email{\href{mailto:thierry.cazenave@sorbonne-universite.fr}{thierry.cazenave@sorbonne-universite.fr}}

\author[F. Dickstein]{Fl\'avio Dickstein$^{1,2}$}
\email{\href{mailto:flavio@labma.ufrj.br}{flavio@labma.ufrj.br}}

\author[I. Naumkin]{Ivan Naumkin$^3$}
\email{\href{mailto:ivan.naumkin@iimas.unam.mx}{ivan.naumkin@iimas.unam.mx}}

\author[F. B.~Weissler]{Fred B.~Weissler$^4$}
\email{\href{mailto:weissler@math.univ-paris13.fr}{weissler@math.univ-paris13.fr}}

\address{$^1$Sorbonne Universit\'e \& CNRS, Laboratoire Jacques-Louis Lions,
B.C. 187, 4 place Jussieu, 75252 Paris Cedex 05, France}

\address{$^2$Instituto de Matem\'atica, Universidade Federal do Rio de Janeiro, Caixa Postal 68530, 21944--970 Rio de Janeiro, R.J., Brazil}

\address{$^3$Departamento de F\'\i sica Matem\'atica,
Instituto de Investigaciones en Matem\'aticas Aplicadas y en Sistemas, 
Universidad Nacional Aut\'onoma de M\'exico, 
Apartado Postal 20-126, M\'exico DF 01000, M\'exico}

\address{$^4$Universit\'e Paris 13, Sorbonne Paris  Cit\'e, LAGA CNRS UMR 7539, 99 Avenue J.-B. Cl\'e\-ment, F-93430 Villetaneuse, France}

\subjclass[2010] {Primary 35K58; Secondary 35K91, 35A01, 35A02, 35C06}

\keywords{Nonlinear heat equation, self-similar solutions, local existence, nonexistence, nonuniqueness}

\thanks{Research supported by the ``Brazilian-French Network in Mathematics"}
\thanks{Fl\'avio Dickstein was partially supported by CNPq (Brasil).}
\thanks{This work was prepared while Ivan Naumkin was visiting the Laboratoire J.A. Dieudonn\'e of the Universit\'e de Nice Sophia-Antipolis. 
He thanks the project ERC-2014-CdG 646.650 SingWave for its financial support, and the Laboratoire J.A. Dieudonn\'e for its kind hospitality. }

\begin{abstract}
We consider the nonlinear heat equation $u_t = \Delta u +  |u|^\alpha u$ with $\alpha >0$, either on $\R^N $, $N\ge 1$, or on a bounded domain with Dirichlet boundary conditions. 
We prove that in the Sobolev subcritical case $(N-2) \alpha <4$, for every $\mu \in \R$, if the initial value $\DI $ satisfies $\DI(x) = \mu  |x-x_0|^{-\frac {2} {\alpha }}$ in a neighborhood of some $x_0\in \Omega $ and is bounded outside that neighborhood, then there exist infinitely many solutions of the heat equation with the initial condition $u(0)= \DI$.  The proof uses a fixed-point argument to construct perturbations of self-similar solutions with initial value
$\mu  |x-x_0|^{-\frac {2} {\alpha }}$ on $\R^N $.

Moreover, if $\mu \ge \mu _0$ for a certain $ \mu _0( N, \alpha )\ge 0$, and $\DI\ge 0$, then there is no nonnegative local solution of the heat equation with the initial condition $u(0)= \DI$, but there are infinitely many sign-changing solutions.

\end{abstract}

\maketitle


\section{Introduction}

In this paper we study the local well-posedness for the nonlinear heat equation
\begin{equation} \label{Inlh} 
\begin{cases} 
u_t = \Delta u +  |u|^\alpha u \\
u ( 0, \cdot ) = \DI (\cdot )
\end{cases} 
\end{equation} 
where $u= u(t,x)$, $t\ge 0$, $x \in \Omega $ where $\Omega $ is a domain in $\R^N $ (possibly $\Omega =\R^N $), and $\alpha >0$.
In the case where $\Omega \not = \R^N $, we impose Dirichlet boundary conditions. 
There is already a vast literature devoted to this topic, and it is well known that this problem is locally well-posed in various function spaces, for example in $\Czo $, in $L^p (\Omega ) $ for $p\ge 1$, $p>\frac {N\alpha } {2}$, and in certain Sobolev and Besov spaces of both positive and negative order. 
On the other hand, this problem is not locally well-posed in $L^p (\Omega ) $ if $\alpha >\frac {2} {N}$ and $1\le p < \frac {N\alpha } 2{}$. We refer the reader to the book~\cite{QuittnerS} as a general reference to this subject. 

The present paper is concerned with the situation where the problem~\eqref{Inlh} is not locally well posed.
For example, regular initial values can yield multiple solutions which are continuous into $L^p (\Omega )$, $1\le p<\frac {N\alpha } {2}$, see~\cite{HarauxW, Baras, Terraneo, MatosT}.  Also, it was observed in~\cite{Weissler2, Weissler4, BarasP, GalaktionovV, QuittnerS, LaisterRS, LaisterRSVL, FujishimaI}, for these same values of $p$, that there are nonnegative  $\DI \in L^p (\Omega )$ for which there is no local-in-time nonnegative solution.
This phenomenon has been taken as evidence of non-local-well-posedness. Indeed, in all the spaces where~\eqref{Inlh} is locally well-posed, nonnegative initial values yield nonnegative solutions. 
Thus if there is a nonnegative initial value for which there is no nonnegative solution, one might think that there is no solution at all. Our recent paper~\cite{CDNW1} shows this to be incorrect. 
More precisely, in the case $\Omega =\R^N $ and $0<\alpha < \frac {4} {N-2}$, then for the initial value $\DI (x)= \mu  |x|^{- \frac {2} {\alpha }}$ with $\mu >0$ sufficiently large, there is no local nonnegative solution of~\eqref{Inlh}, but there exist nonetheless infinitely many global solutions of~\eqref{Inlh} which change sign.
In other words, local well-posedness fails, not because of nonexistence, but because of nonuniqueness. 

The purpose of this paper is to extend the results of~\cite{CDNW1} to a much broader context. 
In this previous work, the initial values studied are all homogeneous, of the form $\DI (x)= \mu  |x|^{- \frac {2} {\alpha }}$,  and the resulting solutions are all self-similar. 
In particular, these initial values are not in any space $L^p (\R^N ) $.
Here, we consider initial values, both on $\R^N $ and on a bounded domain $\Omega $, which exhibit a point singularity of the same form $\DI (x)= \mu  |x -x_0 |^{- \frac {2} {\alpha }}$ near some point $x_0\in \Omega $, but which have a general behavior away from $x_0$. 
For a wide class of such initial values, we prove that there are infinitely many local solutions of~\eqref{Inlh}. In some of these cases, the initial value is nonnegative and there is no local nonnegative solution of~\eqref{Inlh}.
In the case $\alpha >\frac {2} {N}$, this includes initial values that belong to $L^p (\Omega ) $ for all $1\le p<\frac {N\alpha } {2}$.

In order to state  our nonuniqueness results, we introduce some notation. 
Given a domain $\Omega \subset \R^N $, we denote by $ (e^{t \Delta _\Omega }) _{ t\ge 0 }$ the heat semigroup on $\Czo $, the completion of $C^\infty _\Comp (\Omega )$ in $L^\infty  (\Omega ) $, and by $G _\Omega  (t,x,y)$  the associated heat kernel. 
In the particular case $\Omega =\R^N $, we let  $ (e^{t \Delta _{\R^N } }) _{ t\ge 0 } = (e^{t \Delta  }) _{ t\ge 0 }$ and $G _{\R^N }  (t,x,y) = G (t,x,y) = (4\pi t)^{-\frac {N} {2}} e^{- \frac { |x-y|^2 } {4t}}$.
Note that $e^{t \Delta _\Omega }$ can be extended to $L^1 (\Omega ) +L^\infty  (\Omega ) $, and to measurable functions $\DI : \Omega \to [0,\infty )$ by setting
\begin{equation*} 
e^{t \Delta _\Omega } \DI (x)= \int _\Omega G_\Omega (t,x,y ) \DI (y) \,dy
\end{equation*} 
for $x\in \Omega $. In the latter case, the right-hand side is the integral of a nonnegative, measurable function, and therefore well defined, finite or infinite.
We also denote by $\Cbu$ the Banach space of  uniformly continuous, bounded functions $\R^N \to \R$, equipped with the sup norm. 

As consequences of the main results proved in the body of the paper, we have the following two results, respectively on $\R^N $ and on a bounded domain $\Omega $.

\begin{thm} \label{ePR1} 
Let $0< \alpha <\frac {4} {N-2}$, $\MUU \in \R$ and suppose $\DI \in L^\infty _\Loc (\R^N \setminus \{0\} ) $ 
is such that
\begin{enumerate}[{\rm (i)}] 

\item $\DI (x) =  \MUU   |x|^{-\frac {2} {\alpha }} $ in a neighborhood of $0$.

\item $\displaystyle \DI (x) \goto \noindent  _{  |x| \to \infty } 0$.

\end{enumerate} 
It follows that there exists a sequence $(u^m) _{ m\ge 1 }$ of distinct sign-changing solutions of~\eqref{Inlh},   $u^m \in C((0,T^m), \Cz) $, with $T^m >0$, and the initial condition is satisfied in the sense that $u^m(t) \to \DI$ in $L^p   _\Loc ( \R^N \setminus \{0\})$ for all $1\le p<\infty $. 

Furthermore, if  $\alpha >\frac {2} {N}$,  these solutions satisfy the integral equation
\begin{equation} \label{fNZ2} 
u(t) = e^{t\Delta } \DI + \int _0^t e^{ (t-s) \Delta }  |u(s)|^\alpha u(s)\, ds
\end{equation} 
where  the integral is convergent  in $L^r (\R^N ) + L^\infty  (\R^N ) $ for all $1\le  r < \frac {N\alpha } {2}$ and each term is in $C((0,T^m ), \Cz )$.  Moreover, $u^m - e^{t \Delta } \DI \in  C([0,T^m ), L^r (\R^N ) + L^\infty  (\R^N ) )$ for all $r \ge 1$ such that $\frac {N\alpha } {2(\alpha +1) } < r < \frac {N\alpha } {2}$. 
In addition, if $\DI\in C( \R^N \setminus \{ 0\} )$ then $u^m \in  C([0,T^m ), L^r (\R^N ) + L^\infty  (\R^N ) )$ for the same values of $r$.

Moreover, there exists $\MUU_0\ge 0$ such that if  $\MUU>\MUU_0$ and $\DI \ge 0$, then equation~\eqref{Inlh} has no local nonnnegative solution.
\end{thm}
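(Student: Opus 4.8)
The plan is to realize each of the solutions $u^m$ as a perturbation of one of the self-similar solutions carrying the singular datum $\MUU|x|^{-\frac{2}{\alpha}}$, as announced in the abstract. First I would split the datum as $\DI=\MUU|x|^{-\frac{2}{\alpha}}+\DIbd$ with $\DIbd\Eqdef\DI-\MUU|x|^{-\frac{2}{\alpha}}$. By hypothesis~(i) the remainder $\DIbd$ vanishes near $0$, so it carries no singularity, and by hypothesis~(ii) together with $\DI\in L^\infty_\Loc(\R^N\setminus\{0\})$ it is bounded and tends to $0$ at infinity; thus $\DIbd$ is a regular decaying datum (in $\Cz$ when $\DI$ is continuous). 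In the subcritical range $(N-2)\alpha<4$ the construction of~\cite{CDNW1}, revisited in the body of the paper, supplies for the given $\MUU\in\R$ an infinite family $(U^m)_{m\ge1}$ of distinct, sign-changing self-similar solutions of $u_t=\Delta u+|u|^\alpha u$ on $\R^N$ with common initial value $\MUU|x|^{-\frac{2}{\alpha}}$, each of the form $U^m(t,x)=t^{-\frac1\alpha}\Phi_m(x/\sqrt t)$ with a bounded profile $\Phi_m$ matching the prescribed singular tail and having $m$ sign changes; these will be my backgrounds.

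For each fixed $m$ I would look for a solution of~\eqref{Inlh} of the form $u^m=U^m+v$, so that the perturbation $v$ must solve the Duhamel equation $v(t)=e^{t\Delta}\DIbd+\int_0^t e^{(t-s)\Delta}G^m(v)(s)\,ds$ with $G^m(v)=|U^m+v|^\alpha(U^m+v)-|U^m|^\alpha U^m$. Using the elementary bound $|G^m(v)|\le C(|U^m|^\alpha+|v|^\alpha)|v|$, I would solve this by a contraction-mapping argument on a short interval $(0,T^m)$ in a space of continuous functions equipped with time-weighted norms adapted to the self-similar scaling of $U^m$, obtaining $v^m\in C([0,T^m),\,\cdot\,)$ with $v^m(0)=\DIbd$. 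Since $U^m(t)\to\MUU|x|^{-\frac{2}{\alpha}}$ locally away from the origin as $t\downarrow0$ while $v^m(t)\to\DIbd$, this yields $u^m(t)=U^m(t)+v^m(t)\to\DI$ in $L^p_\Loc(\R^N\setminus\{0\})$ for every $1\le p<\infty$; for $t>0$ both $U^m(t)$ and $v^m(t)$ are continuous and decaying, so $u^m\in C((0,T^m),\Cz)$. Because the $U^m$ are distinct and sign-changing and the perturbations are controlled, the same holds for the $u^m$.

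To obtain the integral equation and the regularity statements when $\alpha>\frac2N$, I would first record that $\MUU|x|^{-\frac2\alpha}\in L^r_\Loc$ exactly for $r<\frac{N\alpha}2$, so with~(ii) one has $\DI\in L^r(\R^N)+L^\infty(\R^N)$ for all $1\le r<\frac{N\alpha}2$ (here $\frac{N\alpha}2>1$ is precisely the assumption $\alpha>\frac2N$), making $e^{t\Delta}\DI$ meaningful and, for $t>0$, an element of $\Cz$. Subtracting the Duhamel identity for the self-similar background from that for $v^m$ gives $u^m-e^{t\Delta}\DI=\int_0^t e^{(t-s)\Delta}|u^m|^\alpha u^m\,ds$, which I would show converges in $L^r+L^\infty$ for $1\le r<\frac{N\alpha}2$ and defines, like $e^{t\Delta}\DI$, a curve in $C((0,T^m),\Cz)$; this establishes~\eqref{fNZ2} with each of its two terms in $C((0,T^m),\Cz)$. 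The membership $u^m-e^{t\Delta}\DI\in C([0,T^m),L^r+L^\infty)$ for $\frac{N\alpha}{2(\alpha+1)}<r<\frac{N\alpha}2$ then follows from smoothing estimates applied to the source $|u^m|^\alpha u^m\sim\MUU^{\alpha+1}|x|^{-\frac{2(\alpha+1)}\alpha}$ near the origin, which lies in $L^\rho_\Loc$ for $\rho<\frac{N\alpha}{2(\alpha+1)}$; the extra continuity up to $t=0$ when $\DI\in C(\R^N\setminus\{0\})$ is a bootstrap from the continuity of $e^{t\Delta}\DI$.

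For the final clause, fix the threshold $\MUU_0(N,\alpha)\ge0$ of~\cite{CDNW1} and suppose $\MUU>\MUU_0$ and $\DI\ge0$. Any nonnegative local solution would satisfy $u(t)\ge e^{t\Delta}\DI+\int_0^t e^{(t-s)\Delta}|u|^\alpha u\,ds$ with $\DI\ge\MUU|x|^{-\frac2\alpha}\mathbf 1_B$, where $B$ is the ball on which equality~(i) holds; comparison with the pure homogeneous datum then places us in the nonexistence regime of~\cite{CDNW1}, where the nonlinear feedback forces the Picard iterates to blow up instantaneously at the origin, a contradiction. Thus no nonnegative solution exists, even though the sign-changing $u^m$ constructed above do. The main obstacle is the contraction step: for $\MUU\ne0$ the potential $|U^m(t,x)|^\alpha$ behaves near the origin like $|\MUU|^\alpha|x|^{-2}$, the scale-invariant inverse-square borderline, so the linearized operator $v\mapsto\int_0^t e^{(t-s)\Delta}(|U^m|^\alpha v)\,ds$ gains no smallness from shrinking $T^m$ by crude estimates; making it genuinely contractive requires exploiting the self-similar scaling and the subcriticality $(N-2)\alpha<4$ through sharply chosen weighted space-time norms, and this is where the real work lies.
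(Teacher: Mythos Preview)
Your overall architecture matches the paper's: split $\DI=\MUU|x|^{-2/\alpha}+\DIbd$, pick a self-similar background $U^m$ from the family in~\cite{CDNW1}, and solve for the perturbation $v=u-U^m$ by a fixed-point argument. The derivation of the integral equation~\eqref{fNZ2} by adding the Duhamel formula for $U^m$ to that for $v$ is also how the paper proceeds, and the nonexistence clause via comparison with the homogeneous datum is likewise correct in spirit (the paper routes it through Corollary~\ref{eFR3}).

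The gap is in the contraction step, and your diagnosis of how to close it points in the wrong direction. You propose to exploit ``the self-similar scaling and the subcriticality $(N-2)\alpha<4$ through sharply chosen weighted space-time norms.'' But scaling-adapted weighted norms of Kato type are precisely the method of~\cite{CW}, and the paper explicitly observes that this only works for \emph{small} backgrounds $U$; it cannot handle arbitrary $\MUU$. Moreover, the subcriticality hypothesis plays \emph{no role} in the contraction: the paper's perturbation theorem (Theorem~\ref{ePSSZ1}) holds for every $\alpha>0$, and $(N-2)\alpha<4$ is used only to guarantee the existence of infinitely many self-similar profiles via Proposition~\ref{eRESS1}.

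What the paper actually does is different and relies essentially on the fact---which you note but do not exploit---that $\DIbd$ vanishes \emph{identically} on a ball $\{|x|<\delta\}$, not merely that it is bounded. The contraction is run in the set $\{|w|\le\Theta\}$ where
\[
\Theta(t,x)=K\Bigl(t^{m/2}+\sum_{j=1}^m t^{(j-1)/2}\chi_j(x)\Bigr),
\]
with $\chi_j$ a smooth cutoff supported in $\{|x|\gtrsim 2^{-j}\delta\}$. For $|x|\le 2^{-m}\delta$ this forces $|w(t,x)|\le Kt^{m/2}$, so the dangerous term $|U|^\alpha|w|\lesssim t^{-1}|w|$ becomes $\lesssim t^{m/2-1}$, which is integrable once $m\ge2$. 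The nested cutoffs $\chi_j$ propagate this vanishing outward scale by scale, and a direct computation (Lemmas~\ref{ePSS2}--\ref{ePSS4}) shows the Duhamel map preserves the bound $|w|\le\Theta$ and contracts in the metric $\|(w-z)/\Theta\|_{L^\infty}$. No smallness of $\MUU$, no scaling norms, and no subcriticality are needed---but the exact vanishing of $\DIbd$ near $0$ is essential, which is why the theorem is stated with hypothesis~(i) as an equality rather than $\DI-\MUU|x|^{-2/\alpha}\in L^\infty$.
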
 

\begin{thm} \label{ePR2} 
Let $0< \alpha <\frac {4} {N-2}$, $\MUU \in \R$, let $\Omega \subset \R^N $ be a bounded, smooth domain and $x_0\in \Omega $. 
Suppose $\DI \in L^\infty _\Loc ( \Omega  \setminus \{ x_0\} ) $  is such that
\begin{enumerate}[{\rm (i)}] 

\item $\DI (x) =  \MUU   |x - x_0 |^{-\frac {2} {\alpha }} $ for $ |x -x_0 |<\delta $, where $\delta >0$.

\item $\DI \in  L^\infty  (\Omega \cap \{  |x - x_0|>\delta  \}) $.

\end{enumerate} 
It follows that there exists a sequence $(u^m) _{ m\ge 1 }$ of distinct, sign-changing solutions of
\begin{equation} \label{NLHD} 
\begin{cases} 
u_t= \Delta u  +  |u |^\alpha u \\
u _{ |\partial \Omega  }=0 \\
u  (0) = \DI
\end{cases} 
\end{equation} 
where $u^m \in C((0,T^m), \Czo) $, with $T^m >0$, and the initial condition is satisfied in the sense that $u^m(t) \to \DI$ in $L^\infty    ( \Omega  \setminus \{  |x - x_0|<\varepsilon \})$ for all $\varepsilon >0$. 

Furthermore, if  $\alpha >\frac {2} {N}$,  these solutions satisfy the integral equation 
\begin{equation} \label{NLHID} 
u(t) = e^{t\Delta _\Omega } \DI + \int _0^t e^{(t-s) \Delta _\Omega }  |u(s)|^\alpha u(s) \, ds
\end{equation} 
where  the integral is convergent in $L^r (\Omega  ) )$ for all $1\le  r < \frac {N\alpha } {2}$, and  each term is in $C((0,T^m ), \Czo )$. Moreover, $u^m \in  C([0,T^m ), L^r (\Omega  ) )$ for all $1\le  r < \frac {N\alpha } {2}$.

In addition, there exists $\MUU_0\ge 0$ such that if  $\MUU>\MUU_0$ and $\DI \ge 0$, 
then equation~\eqref{Inlh} has no local nonnnegative solution.
\end{thm}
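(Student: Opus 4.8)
The plan is to realize each solution as a perturbation $u^m = U^m + v$ of a self-similar solution. Let $\mathcal U^m(t,x) = t^{-1/\alpha}\phi_m(x/\sqrt t)$, $m\ge 1$, be the infinitely many distinct, sign-changing self-similar solutions of $u_t=\Delta u+|u|^\alpha u$ on $\R^N$ with initial value $\mu|x|^{-2/\alpha}$ furnished by the construction underlying Theorem~\ref{ePR1} (whose existence is exactly what $(N-2)\alpha<4$ buys), and set $U^m(t,x)=\mathcal U^m(t,x-x_0)$. Hypothesis (i) makes $\DI$ agree with the singular profile $\mu|x-x_0|^{-2/\alpha}$ near $x_0$, while (ii) and the boundedness of $\Omega$ keep it bounded elsewhere, so the candidate perturbation
\[ v_0 := \DI - \mu\,|\,\cdot-x_0|^{-2/\alpha} \]
vanishes near $x_0$ and lies in $L^\infty(\Omega)$. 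Writing $u=U^m+v$, problem \eqref{NLHD} becomes $v_t=\Delta v+G^m(v)$ with $v|_{\partial\Omega}=-U^m|_{\partial\Omega}$, $v(0)=v_0$, where $G^m(v)=|U^m+v|^\alpha(U^m+v)-|U^m|^\alpha U^m$. Since $U^m$ carries the entire singularity, I look for $v$ bounded. Splitting off the inhomogeneous trace, let $b^m$ solve the linear Dirichlet problem with data $(0,-U^m|_{\partial\Omega})$; as $\mathrm{dist}(x_0,\partial\Omega)>0$ this trace is bounded and continuous on $[0,T]\times\partial\Omega$, so $b^m$ is bounded and, by interior regularity, $b^m(t)\to0$ exponentially fast in $1/t$ on compact subsets of $\Omega$, in particular near $x_0$. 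With $\sigma=v-b^m$ the problem reduces to the fixed-point equation
\[ \sigma(t) = e^{t\Delta_\Omega}v_0 + \int_0^t e^{(t-s)\Delta_\Omega}\,G^m\!\bigl(b^m(s)+\sigma(s)\bigr)\,ds, \]
to which the Dirichlet semigroup applies harmlessly because $v_0\in L^\infty(\Omega)$.

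The crux is the contraction estimate. Linearizing, $G^m$ is governed by the potential $(\alpha+1)|U^m|^\alpha$, which near $x_0$ is of borderline size $\sim t^{-1}$, so a naive bound on $\int_0^t e^{(t-s)\Delta_\Omega}(|U^m|^\alpha\,\cdot)\,ds$ yields only a scale-invariant $O(1)$ constant and smallness cannot be bought by shrinking $T$. I would instead pass to the self-similar variables $y=(x-x_0)/\sqrt t$, $\tau=\log t$, turning the linearization into the autonomous operator $\Delta_y+\tfrac12 y\cdot\nabla_y+\tfrac1\alpha+(\alpha+1)|\phi_m|^\alpha$, and exploit that in the subcritical range its spectral structure makes the associated integral operator invertible, so that the contraction closes once the genuinely nonlinear and data terms are controlled. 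This is the step I expect to be the main obstacle, and it is where $(N-2)\alpha<4$ is essential. The passage to a bounded domain adds two ingredients, both mild in these variables: the Dirichlet kernel is dominated by the Gaussian, $0\le G_\Omega\le G$, so all whole-space kernel estimates survive; and the rescaled domain $\{y:x_0+\sqrt t\,y\in\Omega\}$ exhausts $\R^N$ as $t\to0$, so the boundary recedes to infinity and $b^m$ together with the defect $G-G_\Omega$ contribute only corrections vanishing exponentially in $1/t$ near the singularity. The fixed point then produces, for each $m$, a bounded $\sigma$ on some $[0,T^m]$, hence $v=b^m+\sigma$ and $u^m=U^m+v$.

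It remains to read off the stated properties. For $t>0$ the profile $\phi_m$ is smooth and bounded, so $U^m(t)\in C(\bar\Omega)$; since $v=-U^m$ on $\partial\Omega$ by construction, $u^m(t)$ vanishes on $\partial\Omega$ and $u^m\in C((0,T^m),\Czo)$. Away from $x_0$ one has $U^m(t)\to\mu|\cdot-x_0|^{-2/\alpha}$ and $v(t)\to v_0$ uniformly, giving $u^m(t)\to\DI$ in $L^\infty(\Omega\setminus\{|x-x_0|<\varepsilon\})$; distinctness and the sign change are inherited from the $U^m$, which dominate the bounded perturbation near $x_0$ for small $t$. When $\alpha>\tfrac2N$ one has $\tfrac2\alpha<N$, so $\mu|\cdot-x_0|^{-2/\alpha}\in L^r(\Omega)$ precisely for $r<\tfrac{N\alpha}{2}$ and hence $\DI\in L^r(\Omega)$; then $e^{t\Delta_\Omega}\DI$ is well defined, and combining the mild identity for $\sigma$ with the whole-space integral equation satisfied by $U^m$ yields \eqref{NLHID}, the $L^r$-convergence of the nonlinear integral for $r<\tfrac{N\alpha}{2}$ following from the self-similar bound $|U^m(s)|^\alpha U^m(s)\sim s^{-(\alpha+1)/\alpha}$ and its $s$-integrability, and giving $u^m\in C([0,T^m),L^r(\Omega))$.

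Finally, for the nonexistence I argue by comparison localized at the singularity. Fix a ball $B=B_\delta(x_0)\subset\subset\Omega$ (shrinking $\delta$ if needed). If $u\ge0$ were a local nonnegative solution of \eqref{NLHD}, its restriction to $B$ would be a nonnegative supersolution with $u\ge0$ on $\partial B$ and $u(0)\ge\mu|\cdot-x_0|^{-2/\alpha}$, so by the comparison principle $u$ would dominate the solution $w$ of the Dirichlet problem on $B$ with zero boundary data and initial value $\mu|\cdot-x_0|^{-2/\alpha}$. Iterating the Duhamel formula for $w$ and using that $G_B$ dominates a fixed fraction of the Gaussian on $B$ shows that for $\mu$ larger than a threshold $\mu_0(N,\alpha)\ge0$ — the same mechanism as in the whole-space result of \cite{CDNW1} — the monotone iteration diverges for every $t>0$, so $w$, and hence $u$, cannot exist. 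This produces the asserted $\mu_0$ and completes the proof.
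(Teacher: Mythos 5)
Your overall architecture matches the paper's: perturb each of the infinitely many self-similar solutions $U^m$ from Proposition~\ref{eRESS1}, solve for a bounded correction by a fixed point, read off distinctness from the distinct profile values at the origin, and get nonexistence of nonnegative solutions from a Gaussian lower bound for the Dirichlet kernel on a ball (the paper does this via Proposition~\ref{eFR2} and Corollary~\ref{eFR3}, using van den Berg's estimates and Weissler's necessary condition $\sup_t(\alpha t)^{1/\alpha}\|e^{t\Delta_\Omega}u_0\|_{L^\infty}\le 1$). Your treatment of the boundary (subtracting a caloric function $b^m$ with trace $-U^m|_{\partial\Omega}$) differs from the paper's (which multiplies $U$ by a cutoff $\Psi$ equal to $1$ near $x_0$ and supported in $\Omega$, absorbing the bounded commutator terms $2\nabla U\cdot\nabla\Psi+U\Delta\Psi$ into the source); both are workable and this difference is minor.

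The genuine gap is the contraction itself. You correctly identify that the linearized potential $(\alpha+1)|U^m|^\alpha\sim t^{-1}$ near $(0,x_0)$ is scale-critical, so that smallness cannot be bought by shrinking $T$ — and then you leave the resolution as a proposal to pass to self-similar variables and invoke unspecified ``spectral structure'' of $\Delta_y+\tfrac12 y\cdot\nabla_y+\tfrac1\alpha+(\alpha+1)|\phi_m|^\alpha$. No function space, no spectral facts, and no invertibility statement are supplied, so the central step of the proof is missing. The paper closes the contraction by a completely different, and elementary, mechanism (Theorem~\ref{ePSSZ1} and Lemmas~\ref{ePSS2}--\ref{ePSS4}): it works in the class $\Ens=\{|w|\le\Theta\}$ where $\Theta(t,x)=K\bigl(t^{m/2}+\sum_{j=1}^m t^{(j-1)/2}\chi_j(x)\bigr)$ vanishes like $t^{m/2}$ on $\{|x-x_0|\le 2^{-m}\delta\}$, so that $|U|^\alpha|w|\le C\min\{t^{-1},|x-x_0|^{-2}\}\,\Theta$ is controlled by the integrable majorant $h$ of \eqref{fPSDZ17b2}, and the telescoping estimates $e^{t\Delta}\chi_j\le\chi_{j+1}+O(e^{-c/t})$ show the Duhamel operator maps this class back into itself with a small constant once $m$ is large and $T$ small. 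This is precisely where the hypothesis that $\DI$ equals $\mu|x-x_0|^{-2/\alpha}$ \emph{exactly} near $x_0$ (so that $v_0$ vanishes there and $e^{t\Delta_\Omega}|v_0|\le\tfrac K2\chi_1+\tfrac K4 t^{m/2}$) is used; your sketch never exploits that vanishing, which is a further sign the key step is not in place. Note also that $(N-2)\alpha<4$ enters only through the existence of the self-similar profiles, not through the fixed-point argument, contrary to what you assert.
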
  

The basic method used to prove the above theorems is a perturbation argument. We consider a self-similar solution $U$ with initial value $\mu  |x|^{- \frac {2} {\alpha }}$, known to exist, and look for a solution $u(t,x)$ of the form
\begin{equation*} 
u (t, x) = \Psi (x) U (t, x) + w (t, x),
\end{equation*} 
where $\Psi $ is a cut-off function. 
The purpose of the function $\Psi $ is to ensure that, in the case where $\Omega \not = \R^N $, $u$ and $w$ satisfy the same (Dirichlet) boundary conditions.
The integral equation satisfied by the unknown function $w$ is then solved by using a fixed point argument. 
The difficulty is that the equation satisfied by $w$ contains some terms that are highly singular at $(t,x)= (0,0)$. 
The metric space in which the fixed point argument is carried out is made up of functions which vanish with sufficiently high order at $(0,0)$, so as to balance the singular terms in the equation. The delicate point in the construction of this set is that it must at the same time be stable by the iteration process.

This method has some limitations and the results we obtain are not as strong as what we would like or what we think is true. First, the nature of the fixed-point argument requires that $\DI (x) \equiv  \MUU   |x|^{-\frac {2} {\alpha }} $  in a neighborhood of $0$. On the other hand, we expect that Theorems~\ref{ePR1} and~\ref{ePR2} would be true as stated, but with that condition replaced by the requirement that $\DI  -  \MUU   | \cdot |^{-\frac {2} {\alpha }} \in L^\infty  (\Omega ) $. 
Second, Theorem~\ref{ePR1} does not give a nonuniqueness result in $L^r (\R^N )$ for $r \ge 1$ such that $\frac {N\alpha } {2(\alpha +1) } < r < \frac {N\alpha } {2}$ (see Remark~\ref{eRemNU4} for details). 
However, in the case of a bounded domain (Theorem~\ref{ePR2}), the solutions $u^m$ are all in $C([0,T^m), L^r (\Omega  ) )$ for $1\le r  < \frac {N\alpha } {2}$.
This is a genuine nonuniqueness result in $L^r (\Omega )$.  

The results of the current paper as well as~\cite{CDNW1} call for a reevaluation of the notion of nonexistence of local solutions. For all the nonnegative initial values for which no nonnegative local solution exists, the possibility remains that these initial values give rise to sign-changing solutions.
To our knowledge, there is no example of an initial value for which it is known that there is no local solution of~\eqref{Inlh} or~\eqref{NLHID}, sign-changing or not.

We mention here some recent papers which are related to this work. Under appropriate restrictions on $\alpha $ and $\mu $, in~\cite{FMY}, the authors prove the existence of nonnegative solutions, some global, some non-global, which have the homogeneous initial value $\mu  |x|^{- \frac {2} {\alpha }}$, but which are not self-similar. The more general equation $u_t = \Delta u +  f(u)$ is investigated in~\cite{LaisterRS} and in~\cite{LaisterRSVL}. For $\Omega$ being the whole space or a bounded domain, a full characterisation of the nonnegative functions $f$ for which the equation has a local solution bounded in $L^q(\Omega )$ for all nonnegative initial data $u_0\in L^q(\Omega )$ is stablished in~\cite{LaisterRSVL}.

The rest of the paper is organized as follows. In Section~\ref{sNEPS} we give some conditions under which a nonnegative initial value does not give rise to any local nonnegative solution of either~\eqref{Inlh} or~\eqref{NLHID}. See in particular Corollaries~\ref{eTHF2} and~\ref{eFR3}.
Section~\ref{sPSS} presents the main technical achievement of this article, i.e. the fixed point argument that proves the existence of solutions which are perturbations of a singular solution known already to exist. 

In Sections~\ref{sPSSS} and~\ref{sSCSD} we apply the result of Section~\ref{sPSS} in the case where the known singular solution is in fact a self-similar solution. This gives perturbed solutions, respectively on $\R^N $ (see Theorem~\ref{ePSS1}) and on a bounded domain $\Omega $ (see Theorem~\ref{ePSD1}). 
Combining these two results with the results in~\cite{CDNW1}, we obtain Theorems~\ref{ePR1} and~\ref{ePR2}, respectively.
Finally, we collect in Appendices~\ref{sLHEOM} and~\ref{sLHERN} a few results concerning the regularity of solutions of the heat equation in a form which we need for this paper.  In Appendix~\ref{sRHE}, first on $\R^\N$ and then on a sufficiently smooth bounded domain, we prove that a certain class of solutions of integral equation \eqref{NLHID} are also solutions
of the initial value problem \eqref{Inlh} and give some precise information about their regularity.

\section{Nonexistence of positive solutions} \label{sNEPS} 

We consider the nonlinear heat equation~\eqref{Inlh} on a domain $\Omega \subset \R^N $,
with initial values which are measurable functions $\DI: \Omega \to \R$.
This includes the possibility that $\DI \not \in L^1_\Loc (\Omega )$. 

\begin{defn} \label{eDefSR} 
Let $\Omega $ be a domain of $\R^N $, $\alpha >0$, and let $\DI $ be a measurable function $\Omega \to \R$. Given $T>0$, a regular solution of~\eqref{Inlh} on $(0, T)$ with Dirichlet boundary conditions is a function $u \in C((0,T), \Czo )$ which is a  classical solution  of~\eqref{Inlh} on $(0,T) $, and such that there exists a sequence $t_n \downarrow 0$ such that $u(t_n) \to \DI$ almost everywhere as $n \to \infty $.
\end{defn} 

In the case where $\DI \ge 0$, we also consider solutions of~\eqref{Inlh} that may be singular at positive times.

\begin{defn} \label{eDefSI} 
Let $\Omega $ be a domain of $\R^N $, $\alpha >0$, and let $\DI $ be measurable $\Omega \to [0, \infty )$. Given $T>0$, a nonnegative integral solution of~\eqref{Inlh} on $(0, T)$ with Dirichlet boundary conditions is a measurable function $u: (0, T) \times \Omega \to [0,\infty )$ which satisfies 
\begin{equation} \label{fLoc1} 
u(t,x) = \int _\Omega G_\Omega (t,x,y) \DI (y) \, dy + \int _0^t \int _\Omega G_\Omega (t-s, x, y) ( |u|^\alpha u) (s, y) \, dy \,ds
\end{equation} 
a.e. on $(0,T) \times \Omega $. (The integrands in the right-hand side of~\eqref{fLoc1} are nonnegative, measurable functions, so the integrals are well defined, possibly infinite.)
\end{defn} 

\begin{rem} 
Here are some comments on the above definitions.
\begin{enumerate}[{\rm (i)}] 

\item 
If $u$ is a regular solution of~\eqref{Inlh} on $(0, T)$ in the sense of Definition~\ref{eDefSR}, and if $u\ge 0$, then $u$ need not be an integral solution in the sense of Definition~\ref{eDefSI}.
Indeed, assuming $\alpha <\frac {2} {N}$, we construct a positive solution of~\eqref{Inlh} with $\DI =0$ in the sense of Definition~\ref{eDefSR}, which is not a solution of~\eqref{fLoc1} with $\DI= 0$.
To do this, we recall that if $\alpha < \frac {2} {N}$, then the initial value problem~\eqref{Inlh} is locally well posed in the space of bounded measures. In particular, there exists a positive local solution $u$ for $\DI= \delta  _{ x_0 }$  where $\delta  _{ x_0 }$ is the Dirac measure at $x_0\in \Omega $, which is a solution of the integral equation~\eqref{fLoc1} with $\DI= \delta  _{ x_0 }$. On the other hand, $u(t, x) \to 0$ for all $x\not = x_0$ as $t\to 0$, so that the initial value in the sense of Definition~\ref{eDefSR} is $u(0)= 0$, but $u$ is not a solution of~\eqref{fLoc1} with $\DI= 0$.

\item 
Nonnegative integral solutions of~\eqref{Inlh} on $(0, T)$ in the sense of Definition~\ref{eDefSI} may have a singularity (in space) for all $t\in (0,T)$, so they need not be regular solutions in the sense of Definition~\ref{eDefSR}.
 For instance, if $\alpha >\frac {2} {N-2}$, then $u (t,x)= (  \frac {2} {\alpha }(  N -2 - \frac {2} {\alpha } ) )^{\frac{1}{\alpha}}|x|^{-\frac{2}{\alpha}}$ is a nonnegative integral solution the sense of Definition~\ref{eDefSI}. 
See~\cite[Lemma~7.1]{CDNW1}.
\end{enumerate} 

\end{rem} 

We recall the following result, which is a special case of~\cite[Theorem~1]{Weissler4}.

\begin{prop}  \label{eTHF1} 
Let $\Omega $ be a domain of $\R^N $, $\alpha >0$,  $\DI $ a measurable function $\Omega \to [0, \infty )$, and  $T>0$. If there exists a nonnegative integral solution of~\eqref{Inlh} on $(0, T)$ with Dirichlet boundary conditions (in the sense of Definition~$\ref{eDefSI}$), then 
\begin{equation} \label{fTHF1} 
\sup  _{ 0< t < T } (\alpha t)^{\frac {1} {\alpha }}  \| e^{t \Delta _\Omega } \DI \| _{ L^\infty  } \le 1
\end{equation} 
\end{prop}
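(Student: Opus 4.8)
The plan is to reduce the a priori bound~\eqref{fTHF1} to a scalar differential inequality of Bernoulli type, obtained by testing the integral equation~\eqref{fLoc1} against the heat kernel at a fixed point. Fix $T'\in(0,T)$ and $x_0\in\Omega$ and set
$$
\psi(t)=\int_\Omega G_\Omega(T'-t,x_0,y)\,u(t,y)\,dy=[e^{(T'-t)\Delta_\Omega}u(t)](x_0),\qquad 0<t<T'.
$$
Since $u\ge0$, $\psi$ is well defined in $[0,\infty]$ and it morally interpolates between $[e^{T'\Delta_\Omega}\DI](x_0)$ at $t=0^+$ and $u(T',x_0)$ at $t=T'^-$. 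The goal is to show that $\psi$ satisfies $\psi'\ge\psi^{\alpha+1}$, so that its finiteness on $(0,T')$ forces $[e^{T'\Delta_\Omega}\DI](x_0)\le(\alpha T')^{-1/\alpha}$; taking the essential supremum in $x_0$ then yields~\eqref{fTHF1}.

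First I would apply $e^{(T'-t)\Delta_\Omega}$ to~\eqref{fLoc1} and use the Chapman--Kolmogorov identity $\int_\Omega G_\Omega(T'-t,x_0,z)G_\Omega(t-s,z,y)\,dz=G_\Omega(T'-s,x_0,y)$. As all integrands are nonnegative, Tonelli's theorem justifies every exchange and gives, for a.e.\ $t$,
$$
\psi(t)=[e^{T'\Delta_\Omega}\DI](x_0)+\int_0^t[e^{(T'-s)\Delta_\Omega}(u(s)^{\alpha+1})](x_0)\,ds.
$$
This representation is manifestly nondecreasing and absolutely continuous in $t$, with $\psi'(t)=[e^{(T'-t)\Delta_\Omega}(u(t)^{\alpha+1})](x_0)$, provided the Duhamel integral converges. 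Evaluating~\eqref{fLoc1} at $t=T'$ shows that the total integral equals $u(T',x_0)-[e^{T'\Delta_\Omega}\DI](x_0)$, which is finite for a.e.\ $x_0$ (and a.e.\ admissible $T'$) because $u(T',\cdot)$ is finite a.e.; hence $\psi$ is finite on $(0,T')$ with $\psi(0^+)=[e^{T'\Delta_\Omega}\DI](x_0)$.

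Second, the differential inequality comes from Jensen's inequality applied to the convex function $r\mapsto r^{\alpha+1}$ against the \emph{subprobability} measure $G_\Omega(T'-t,x_0,y)\,dy$, of total mass $\mu_0\le1$. Normalizing to a probability measure gives $\psi'(t)=\int_\Omega G_\Omega\,u(t)^{\alpha+1}\ge\mu_0^{-\alpha}\bigl(\int_\Omega G_\Omega\,u(t)\bigr)^{\alpha+1}\ge\psi(t)^{\alpha+1}$, the last step using $\mu_0\le1$. Thus $\psi'\ge\psi^{\alpha+1}$ a.e. Integrating $-\tfrac1\alpha\tfrac{d}{dt}\psi^{-\alpha}=\psi^{-\alpha-1}\psi'\ge1$ from $0^+$ to $T'^-$ and discarding the nonnegative endpoint term yields $\tfrac1\alpha\psi(0^+)^{-\alpha}\ge T'$, i.e.\ $[e^{T'\Delta_\Omega}\DI](x_0)=\psi(0^+)\le(\alpha T')^{-1/\alpha}$ for a.e.\ $(T',x_0)$. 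Taking $\esup$ in $x_0$, and upgrading ``a.e.\ $T'$'' to every $T'$ by the monotonicity of $t\mapsto\|e^{t\Delta_\Omega}\DI\|_{L^\infty}$ together with the continuity of $(\alpha t)^{-1/\alpha}$, gives~\eqref{fTHF1}.

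The main obstacle is one of rigor rather than of the core idea: since~\eqref{fLoc1} holds only a.e.\ and $u(t)$ need not be bounded or even integrable, $\psi$ could a priori be infinite, which would void the differential inequality. The crucial point making the argument work is that, \emph{because $u$ is itself a finite integral solution}, the Duhamel integral defining $\psi$ converges (it is dominated by $u(T',x_0)<\infty$), and this single fact simultaneously legitimizes the absolute continuity of $\psi$, the pointwise Jensen step, and the final integration of the Bernoulli inequality. The subprobability normalization in Jensen (needed to cover the Dirichlet case $\Omega\ne\R^N$, where the kernel loses mass) and the passage from a.e.\ $T'$ to every $T'$ are the remaining routine checks.
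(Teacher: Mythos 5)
Your proposal is correct, and it is essentially the same argument as the paper's: the paper proves Proposition~\ref{eTHF1} by citing Theorem~1 of \cite{Weissler4} (noting only that $\DI\in L^1_\Loc$ and that the cited proof works on an arbitrary domain), and the mechanism behind that theorem is exactly your computation --- test \eqref{fLoc1} against $G_\Omega(T'-t,x_0,\cdot)$, use Jensen's inequality with the sub-probability measure to get $\psi'\ge\psi^{\alpha+1}$, and integrate the resulting Bernoulli inequality. Your writeup correctly identifies the one nontrivial point (finiteness of $\psi$, inherited from the a.e.\ finiteness of $u(T',\cdot)$) and the a.e.-to-everywhere upgrade via monotonicity of $t\mapsto\|e^{t\Delta_\Omega}\DI\|_{L^\infty}$, so it can stand as a self-contained substitute for the citation.
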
 

\begin{proof} 
It follows from~\eqref{fLoc1} that $\DI\in L^1_\Loc (\Omega ) $, for if not, $u(t,x) $ would be infinite for every $ ( t,x) \in (0,T) \times \Omega $. In particular, $\DI $ defines a Borel measure on $\Omega $. 
If $\Omega $ is a bounded, smooth domain, then the result follows from Theorem~1 in~\cite{Weissler4}. 
Even though the result in~\cite{Weissler4} is stated for a bounded, smooth domain, the same proof is valid for an arbitrary domain. 
\end{proof} 

\begin{cor}  \label{eTHF2} 
Let $\Omega $ be a domain $\R^N $, $\alpha >0$, and $\DI $ a measurable function $\Omega \to [0, \infty )$. Suppose
\begin{equation} \label{feFR1:1} 
\limsup  _{ t\downarrow 0 } ( \alpha t )^{\frac {1} {\alpha }} \| e^{t\Delta _\Omega } \DI \| _{ L^\infty  } > 1 .
\end{equation} 

\begin{enumerate}[{\rm (i)}] 

\item \label{eTHF2:1} 
There does not exist any nonnegative integral solution of~\eqref{Inlh} with Dirichlet boundary conditions on any interval $(0,T)$ with $T>0$, in the sense of Definition~$\ref{eDefSI}$.

\item \label{eTHF2:2} 
There does not exist any nonnegative regular solution of~\eqref{Inlh} with Dirichlet boundary conditions on any interval $(0,T)$ with $T>0$, in the sense of Definition~$\ref{eDefSR}$.

\item \label{eTHF2:3} 
If $\DI^n = \min\{ \DI, n \}\in L^\infty  (\Omega ) $ for $n\ge 0$, and  if $\Tma ( \DI^n )>0$ is the maximal existence time of the corresponding solution of~\eqref{Inlh} with Dirichlet boundary conditions, then $\Tma ( \DI^n) \to 0$  as $n\to \infty $.

\end{enumerate} 
\end{cor}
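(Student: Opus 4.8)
The plan is to derive all three parts from Proposition~\ref{eTHF1}, which asserts that the existence of a nonnegative integral solution on $(0,T)$ forces the bound \eqref{fTHF1}, namely $(\alpha t)^{\frac1\alpha}\|e^{t\Delta_\Omega}\DI\|_{L^\infty}\le 1$ for all $0<t<T$. The hypothesis \eqref{feFR1:1} is designed to violate exactly this bound near $t=0$.

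Part \eqref{eTHF2:1} is immediate by contradiction. If a nonnegative integral solution in the sense of Definition~\ref{eDefSI} existed on some $(0,T)$ with $T>0$, then Proposition~\ref{eTHF1} would give $\sup_{0<t<T}(\alpha t)^{\frac1\alpha}\|e^{t\Delta_\Omega}\DI\|_{L^\infty}\le 1$. Since the $\limsup$ as $t\downarrow 0$ only involves arbitrarily small $t$, all of which eventually lie in $(0,T)$, one has $\limsup_{t\downarrow 0}(\alpha t)^{\frac1\alpha}\|e^{t\Delta_\Omega}\DI\|_{L^\infty}\le\sup_{0<t<T}(\alpha t)^{\frac1\alpha}\|e^{t\Delta_\Omega}\DI\|_{L^\infty}\le 1$, contradicting \eqref{feFR1:1}.

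For part \eqref{eTHF2:2} the obstacle is that, as noted in the Remark, a nonnegative regular solution need not be an integral solution, so Proposition~\ref{eTHF1} cannot be applied to $\DI$ directly. The device I would use is a time shift. Let $u\in C((0,T),\Czo)$ be a nonnegative regular solution, with $u(t_n)\to\DI$ a.e. along some $t_n\downarrow 0$. For each fixed $s\in(0,T)$ the function $u(s)\in\Czo$ is bounded and nonnegative, and $t\mapsto u(s+t)$ is the nonnegative classical solution of~\eqref{Inlh} with bounded initial value $u(s)$; by the standard local theory in $\Czo$ it satisfies the Duhamel formula and is therefore a nonnegative integral solution on $(0,T-s)$ with initial value $u(s)$. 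Proposition~\ref{eTHF1} applied to $u(s)$ then yields $(\alpha t)^{\frac1\alpha}\|e^{t\Delta_\Omega}u(s)\|_{L^\infty}\le 1$ for all $0<t<T-s$. Taking $s=t_n$ and letting $n\to\infty$ with $t\in(0,T)$ fixed, Fatou's lemma (the kernel $G_\Omega$ being nonnegative) gives $e^{t\Delta_\Omega}\DI(x)\le\liminf_n e^{t\Delta_\Omega}u(t_n)(x)$ for a.e.\ $x$, while the right-hand side is bounded by $(\alpha t)^{-\frac1\alpha}$ for all large $n$. Hence $(\alpha t)^{\frac1\alpha}\|e^{t\Delta_\Omega}\DI\|_{L^\infty}\le 1$ for every $t\in(0,T)$, again contradicting \eqref{feFR1:1}.

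For part \eqref{eTHF2:3} I would argue by contradiction combined with a monotone-limit passage. Each $\DI^n=\min\{\DI,n\}$ is bounded, so the corresponding solution is a nonnegative integral solution on every $(0,T)$ with $T<\Tma(\DI^n)$; Proposition~\ref{eTHF1} therefore gives $(\alpha t)^{\frac1\alpha}\|e^{t\Delta_\Omega}\DI^n\|_{L^\infty}\le 1$ for all $0<t<\Tma(\DI^n)$. Equivalently, whenever $(\alpha t)^{\frac1\alpha}\|e^{t\Delta_\Omega}\DI^n\|_{L^\infty}>1$ we must have $\Tma(\DI^n)\le t$. Now, since $\DI^n\uparrow\DI$ and $G_\Omega\ge 0$, monotone convergence gives $\|e^{t\Delta_\Omega}\DI^n\|_{L^\infty}\uparrow\|e^{t\Delta_\Omega}\DI\|_{L^\infty}$ for each $t$. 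Using \eqref{feFR1:1}, I would choose a sequence $t_k\downarrow 0$ with $(\alpha t_k)^{\frac1\alpha}\|e^{t_k\Delta_\Omega}\DI\|_{L^\infty}>1$; for each $k$ there is then $N_k$ such that $(\alpha t_k)^{\frac1\alpha}\|e^{t_k\Delta_\Omega}\DI^n\|_{L^\infty}>1$ for all $n\ge N_k$, whence $\Tma(\DI^n)\le t_k$ for $n\ge N_k$. Thus $\limsup_{n\to\infty}\Tma(\DI^n)\le t_k$ for every $k$, and letting $k\to\infty$ gives $\Tma(\DI^n)\to 0$.

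I expect the main obstacle to be part \eqref{eTHF2:2}: circumventing the fact that a regular solution is not a priori an integral solution, precisely the gap illustrated by the Dirac-mass example in the Remark. The time-shift reduction to the bounded initial data $u(s)$, followed by the Fatou passage to the limit $s=t_n\downarrow 0$, is the crux, and care is needed to ensure that the uniform bound $(\alpha t)^{-\frac1\alpha}$ is inherited in the limit despite having only a.e.\ convergence of $u(t_n)$ to $\DI$.
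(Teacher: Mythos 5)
Your proposal is correct and follows essentially the same route as the paper: part (i) is the direct contrapositive of Proposition~\ref{eTHF1}, part (ii) uses the identical time-shift to the bounded data $u(t_n)$ followed by Fatou's lemma, and part (iii) reduces to the same application of Proposition~\ref{eTHF1} to the truncations $\DI^n$. The only (immaterial) difference is in (iii), where you pass to the limit via monotone convergence using $\DI^n\uparrow\DI$, while the paper runs a contradiction argument with Fatou's lemma.
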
 

\begin{proof} 
Property~\eqref{eTHF2:1} is an immediate consequence of Proposition~\ref{eTHF1}.

We next prove Property~\eqref{eTHF2:2}. 
Suppose there exist $T>0$ and a nonnegative regular solution $u$ on $(0,T)$, in the sense of Definition~\ref{eDefSR}. Let $0<\delta <\frac {T} {2}$. 
Since $u$ is a classical solution on $(\delta  ,\delta +\frac {T} {2})$, we have
\begin{equation*} 
u(t +\delta  )= e^{t  \Delta _\Omega } u( \delta  ) + \int _0 ^t e^{(t- s ) \Delta _\Omega }  |u(s+\delta )|^\alpha u(s+\delta ) \,ds 
\end{equation*} 
for all $0\le t\le \frac {T} {2} $, and it follows from Proposition~\ref{eTHF1} that
\begin{equation}  \label{feTHF2:1} 
\sup  _{ 0< t < \frac {T} {2} } (\alpha t)^{\frac {1} {\alpha }}  \| e^{t \Delta _\Omega } u(\delta ) \| _{ L^\infty  } \le 1 .
\end{equation} 
We fix $0< t < \frac {T} {2}$ and we let $\delta =t_n$ where $(t_n) _{ n\ge 1 }$ is the sequence in Definition~\ref{eDefSR}. Inequality~\eqref{feTHF2:1} implies that
\begin{equation*} 
 (\alpha t)^{\frac {1} {\alpha }} \int _\Omega G_\Omega (t, x, y) u(t_n, y) \, dy \le 1 .
\end{equation*} 
Letting $n\to \infty $ and applying Fatou's lemma, we deduce (since $u(t_n) \to \DI$ almost everywhere) that
\begin{equation*} 
 (\alpha t)^{\frac {1} {\alpha }} \int _\Omega G_\Omega (t, x, y) \DI ( y) \, dy \le 1 .
\end{equation*} 
Since $t\in (0, \frac {T} {2})$ is arbitrary, this contradicts~\eqref{feFR1:1}. Hence~\eqref{eTHF2:2} is established. 

We finally prove Property~\eqref{eTHF2:3}. We claim that, given any $T>0$, there exists $n_0\ge 1$ such that
\begin{equation} \label{feFR1:2} 
\sup  _{ 0< t < T } (\alpha t)^{\frac {1} {\alpha }}  \| e^{t \Delta _\Omega } \DI ^n \| _{ L^\infty  } > 1
\end{equation} 
for all $n\ge n_0$. Assuming the claim, we deduce from Proposition~\ref{eTHF1} that $\Tma ( \DI^n)\le T$ for all $n\ge n_0$. Since $T>0$ is arbitrary, Property~\eqref{eTHF2:3} follows. 
To prove the claim~\eqref{feFR1:2}, assume by contradiction that there exists a sequence $n_k\to \infty $ such that 
\begin{equation*} 
\sup  _{ 0< t < T } (\alpha t)^{\frac {1} {\alpha }}  \| e^{t \Delta _\Omega } \DI ^{n_k} \| _{ L^\infty  } \le  1 .
\end{equation*} 
Since $\DI^{n_k} \to \DI$ almost everywhere, we deduce from Fatou's lemma that 
\begin{equation*} 
\sup  _{ 0< t < T } (\alpha t)^{\frac {1} {\alpha }}  \| e^{t \Delta _\Omega } \DI \| _{ L^\infty  } \le  1 ,
\end{equation*} 
which contradicts~\eqref{feFR1:1}. Hence~\eqref{eTHF2:3} is established. 
\end{proof} 

The following proposition, derived from a comparison property of~\cite{VDBerg}, gives a sufficient condition, independent of the domain $\Omega $, for inequality~\eqref{feFR1:1} of Corollary~\ref{eTHF2} to hold.
 
\begin{prop} \label{eFR2} 
Let  $\DIb $ be measurable $\R^N  \to [0, \infty )$, and suppose $\DIb \in L^1 ( \{  |x|> \rho \}) + L^\infty   ( \{  |x|> \rho \}) $ for some $\rho >0$ and
\begin{equation} \label{fFBW1} 
\limsup  _{ t\downarrow 0 } (\alpha t)^{\frac {1} {\alpha  } } \int  _{ \R^N    } G(t, 0, y) \DIb  (y)\, dy >1 .
\end{equation} 
Let $\Omega $ be a domain in $\R^N $ with $\{  |x|<\rho   \}\subset  \Omega $ (possibly $\Omega =\R^N $), and let $\DI $ be measurable $\Omega \to [0, \infty )$. 
If $\DI \ge \DIb$ almost everywhere on $\{  |x| <\rho \}$, 
then~\eqref{feFR1:1} holds.
\end{prop}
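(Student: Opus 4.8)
The plan is to estimate the sup-norm from below by the value of $e^{t\Delta_\Omega}\DI$ at the centre $x=0$ of the ball, and to show that this value carries the blow-up encoded by~\eqref{fFBW1}. Since $0\in\{|x|<\rho\}\subset\Omega$ and the integrand is nonnegative,
\[
\|e^{t\Delta_\Omega}\DI\|_{L^\infty}\ge e^{t\Delta_\Omega}\DI(0)=\int_\Omega G_\Omega(t,0,y)\DI(y)\,dy\ge\int_{\{|y|<\rho\}}G_\Omega(t,0,y)\DIb(y)\,dy,
\]
using $\DI\ge\DIb$ a.e.\ on $\{|y|<\rho\}$. If $\DIb$ fails to be integrable on some compact subset $K$ of the open ball $\{|y|<\rho\}$, then so does $\DI$; since $K\subset\Omega$ and the Dirichlet kernel $G_\Omega(t,x,\cdot)$ is bounded below by a positive constant on $K$ for each fixed $t>0$ and $x\in\Omega$, we get $e^{t\Delta_\Omega}\DI\equiv+\infty$ on $\Omega$ and \eqref{feFR1:1} is immediate. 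Thus I may assume $\DIb\in L^1(\{|y|<\rho'\})$ for every $\rho'<\rho$; note that this is automatic whenever the $\limsup$ in~\eqref{fFBW1} is finite, because then $\int_{\R^N}G(t,0,y)\DIb(y)\,dy<\infty$ and the Gaussian weight is bounded below on $\{|y|<\rho\}$.

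The heart of the argument is to replace the Dirichlet kernel by the free Gaussian kernel near the centre, which is where the comparison of~\cite{VDBerg} enters. By domain monotonicity of Dirichlet heat kernels (the maximum principle applied to $\{|y|<\rho\}\subset\Omega$) one has $G_\Omega(t,0,y)\ge G_{\{|y|<\rho\}}(t,0,y)$ for $y$ in the ball. Fixing $\rho'\in(0,\rho)$ and writing $G_{\{|y|<\rho\}}(t,0,y)$ as $G(t,0,y)$ times the probability that the Brownian bridge from $0$ to $y$ in time $t$ stays in $\{|x|<\rho\}$, the complementary exit probability is controlled by the fluctuation $\ge\rho-\rho'$ needed to reach the sphere, giving a constant $c>0$ with
\[
G_{\{|y|<\rho\}}(t,0,y)\ge\bigl(1-Ce^{-c(\rho-\rho')^2/t}\bigr)\,G(t,0,y)\qquad\text{for all }y\in\{|y|<\rho'\}.
\]
Combining the two facts and letting the prefactor tend to $1$ as $t\downarrow0$,
\[
(\alpha t)^{\frac1\alpha}\|e^{t\Delta_\Omega}\DI\|_{L^\infty}\ge\bigl(1-Ce^{-c(\rho-\rho')^2/t}\bigr)(\alpha t)^{\frac1\alpha}\int_{\{|y|<\rho'\}}G(t,0,y)\DIb(y)\,dy,
\]
so it suffices to show that the $\limsup$ of the integral term exceeds $1$.

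Finally I would recover the full free-space integral of~\eqref{fFBW1} by discarding two negligible regions. On $\{|y|>\rho\}$ I split $\DIb=f+g$ with $f\in L^1$, $g\in L^\infty$ and estimate $\int_{\{|y|>\rho\}}G(t,0,y)f\le(4\pi t)^{-N/2}e^{-\rho^2/4t}\|f\|_{L^1}$ and $\int_{\{|y|>\rho\}}G(t,0,y)g\le\|g\|_{L^\infty}(4\pi)^{-N/2}\int_{\{|z|>\rho/\sqrt t\}}e^{-|z|^2/4}\,dz$; both vanish after multiplication by $(\alpha t)^{1/\alpha}$. On the annulus $\{\rho'<|y|<\rho\}$ one has $G(t,0,y)\le(4\pi t)^{-N/2}e^{-\rho'^2/4t}$, and since $\DIb\in L^1(\{|y|<\rho\})$ in the surviving case, this contribution vanishes as well; if instead the $\limsup$ in~\eqref{fFBW1} is $+\infty$, the exterior contribution being finite forces $\int_{\{|y|<\rho\}}\DIb=+\infty$, returning us to the non-integrable case of the first paragraph. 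In the surviving case,
\[
\limsup_{t\downarrow0}(\alpha t)^{\frac1\alpha}\int_{\{|y|<\rho'\}}G(t,0,y)\DIb(y)\,dy=\limsup_{t\downarrow0}(\alpha t)^{\frac1\alpha}\int_{\R^N}G(t,0,y)\DIb(y)\,dy>1,
\]
which chains back to~\eqref{feFR1:1}.

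The step I expect to be the main obstacle is the kernel comparison in the second paragraph: one must control the boundary defect $G(t,0,y)-G_{\{|y|<\rho\}}(t,0,y)$ uniformly enough that it survives integration against a $\DIb$ that may be strongly singular at the origin. The saving feature, which is the content borrowed from~\cite{VDBerg}, is that on $\{|y|<\rho'\}$ this defect carries the extra factor $e^{-c(\rho-\rho')^2/t}$ coming from the bridge exit probability; being uniform in $y$ and decaying faster than any power of $t$, it dominates both the singularity of $\DIb$ and the algebraic prefactor $(\alpha t)^{1/\alpha}(4\pi t)^{-N/2}$.
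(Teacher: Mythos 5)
Your argument follows the same overall strategy as the paper's proof: bound $\|e^{t\Delta_\Omega}\DI\|_{L^\infty}$ below by the value at the centre, minorize the Dirichlet kernel there by the free Gaussian kernel up to a multiplicative factor tending to $1$ as $t\downarrow 0$, and discard the contribution of $\{|y|>\rho\}$ via the $L^1+L^\infty$ splitting, exactly as in \eqref{feFR1:4}--\eqref{feFR1:5}. The one real difference is the kernel comparison. The paper picks $R>\rho$ with $B_R\subset\Omega$ and uses van den Berg's two bounds to get $G_{B_R}(t,x,y)\ge e^{-\pi^2N^2t/(4(R-\rho)^2)}G(t,x,y)$ for $x,y\in B_\rho$, so it never has to touch the annulus; you instead use domain monotonicity down to $B_\rho$ itself and a Brownian-bridge exit estimate on the smaller ball $B_{\rho'}$, which forces you to also discard the annulus $\{\rho'<|y|<\rho\}$. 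Your comparison is correct (and gives a sharper prefactor, $1-Ce^{-c/t}$ rather than $e^{-ct}$), and it has the advantage of not requiring a ball strictly larger than $B_\rho$ inside $\Omega$ --- something the stated hypothesis $\{|x|<\rho\}\subset\Omega$ does not literally supply, although it does hold in the application (Corollary~\ref{eFR3}, where $\delta<\dist(x_0,\partial\Omega)$).

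The price of shrinking inward is the annulus, and there your argument has one loose end: the assertion that $\DIb\in L^1(\{|y|<\rho\})$ ``in the surviving case''. Your first-paragraph reduction only gives $\DIb\in L^1(K)$ for every compact $K$ contained in the \emph{open} ball, which does not exclude non-integrable mass accumulating at the sphere $\{|y|=\rho\}$. The reduction does suffice whenever $\int_{\R^N}G(t,0,y)\DIb(y)\,dy<\infty$ for at least one $t>0$, since $G(t,0,\cdot)$ is bounded below on $B_\rho$ and then $\DIb\in L^1(B_\rho)$, as you observe for the case of finite $\limsup$; but if that integral is identically $+\infty$ with the divergence coming from $|y|\uparrow\rho$, then $\DIb$ is locally integrable on the open ball, the first paragraph does not apply, and the annulus estimate breaks down. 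To close this you need $\overline{B_\rho}\subset\Omega$: then $G_\Omega(t,0,\cdot)$ is bounded below on $\{\rho'\le|y|<\rho\}$, so $e^{t\Delta_\Omega}\DI\equiv+\infty$ and \eqref{feFR1:1} is trivial. This is the same implicit strengthening of the hypothesis that the paper's proof invokes when it selects $R>\rho$ with $B_R\subset\Omega$; with that reading, your proof is complete.
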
 

\begin{proof} 
We first claim that if $0< \rho < R$ and $\varphi : \R^N \to [0,\infty ) $ is measurable and supported in $\{  |x|\le \rho  \}$, then
\begin{equation} \label{fVDB1} 
 e^{t \Delta  _{ B_R }} ( \varphi _{ |B_R })  \ge  e^{-\frac {\pi ^2 N^2 t} {4(R-\rho ) ^2}}  e^{t \Delta } \varphi   \quad \text{on }  B_\rho 
\end{equation} 
for all $t>0$.
Indeed, it follows from~\cite[Theorem~2]{VDBerg} that 
\begin{equation*} 
G  _{ B_R } (t,x,y) \ge e^{- \frac { |x-y|^2} {4t}} G _{ B_{R-\rho } } (t, 0 , 0 ) =  (4\pi t )^{ \frac {N} {2}} G (t,x,y)  G _{ B_{R-\rho } } (t, 0 , 0 ) 
\end{equation*} 
for all $x,y \in B_\rho $. 
Furthermore, it follows from~\cite[Lemma~9]{VDBerg} that
\begin{equation*} 
G _{ B_{R-\rho }  }(t,  0, 0) \ge (4\pi t )^{- \frac {N} {2}} e^{-\frac {\pi ^2 N^2 t} {4(R-\rho ) ^2}}.
\end{equation*} 
Therefore,
\begin{equation*} 
G  _{ B_R } (t,x,y) \ge e^{-\frac {\pi ^2 N^2 t} {4(R-\rho ) ^2}} G (t,x,y) 
\end{equation*} 
for all $x,y \in B_\rho $,  which implies~\eqref{fVDB1}.

Let now $\DIb $ be as in the statement, and let $R>\rho $ be such that $B_R \subset \Omega $. 
Let 
\begin{equation*} 
\DIbd (x) = \begin{cases} 
\DIb (x) &  |x|<\rho  \\ 0 &  |x|>\rho 
\end{cases} 
\end{equation*} 
and  $ \widetilde{w}_0 = \DIbd \null_{ |\Omega  }$. 
Since  $\DIb \in L^1 ( \{  |x|> \rho \}) + L^\infty   ( \{  |x|> \rho \}) $, we may write
\begin{equation} \label{feFR1:3} 
\DIb = \DIbd + \psi _1 + \psi _2
\end{equation} 
where $\psi _1\in L^1 (\R^N ) $, $\psi _2\in L^\infty (\R^N ) $ and $\psi _1=\psi _2= 0$ on $B_\rho $. 
Next, we have
\begin{equation} \label{feFR1:4} 
(4\pi t)^{\frac {N} {2}} e^{t \Delta }  \psi _1  (0) = \int  _{ \{  |y|>\rho \} } e^{- \frac { |y|^2} {4t}} \psi _1(y) \, dy \le e^{- \frac { \rho ^2} {4t}}  \|\psi _1 \| _{ L^1 }.
\end{equation} 
Furthermore,
\begin{equation} \label{feFR1:5} 
\begin{split} 
(4\pi t)^{\frac {N} {2}} e^{t \Delta }  \psi _2  (0) & = \int  _{ \{  |y|>\rho \} } e^{- \frac { |y|^2} {4t}} \psi _2(y) \, dy \le  \|\psi _2 \| _{ L^\infty  }\int  _{ \{  |y|>\rho \} } e^{- \frac { |y|^2} {4t}}  \, dy \\ & = t^{\frac {N} {2}} \|\psi _2 \| _{ L^\infty  }\int  _{ \{  |y|>\frac {\rho } {\sqrt t} \} } e^{ - \frac { |y|^2} {4}}  \, dy  \le C e^{- \frac { \rho ^2} {8t}} 
\end{split} 
\end{equation} 
and we deduce from~\eqref{feFR1:4} and~\eqref{feFR1:5}  that
\begin{equation}  \label{feFR1:6} 
 (\alpha t)^{\frac {1} {\alpha  } } (  |e^{t \Delta }  \psi _1  (0) | +  |e^{t \Delta }  \psi _2  (0) | ) 
 \goto  _{ t\to 0 }0.
\end{equation} 
Next, we deduce from~\eqref{fVDB1} (with $\varphi = \DIbd $) that
\begin{equation*} 
e^{-\frac {\pi ^2 N^2 t} {4(R-\rho ) ^2}} (\alpha t)^{\frac {1} {\alpha  } }  e^{t\Delta } \DIbd  (0)
\le  (\alpha t)^{\frac {1} {\alpha  } }  e^{t \Delta  _{ B_R } }   \widetilde{w}_0   (0)
\le (\alpha t)^{\frac {1} {\alpha  } }  e^{t \Delta _\Omega }   \widetilde{w}_0   (0) .
\end{equation*} 
Applying~\eqref{feFR1:3}, we deduce that
\begin{equation*} 
(\alpha t)^{\frac {1} {\alpha  } }  e^{t \Delta _\Omega }   \widetilde{w}_0    (0)  \ge e^{-\frac {\pi ^2 N^2 t} {4(R-\rho ) ^2}} (\alpha t)^{\frac {1} {\alpha  } }  e^{t\Delta } \DIb (0) -  (\alpha t)^{\frac {1} {\alpha  } } (  |e^{t \Delta }  \psi _1  (0) | +  |e^{t \Delta }  \psi _2  (0) | ) .
\end{equation*} 
and~\eqref{feFR1:1}  follows from~\eqref{fFBW1} and~\eqref{feFR1:6}.
\end{proof} 

\begin{cor} \label{eFR3} 
Let $\alpha, \gamma , \MUU >0$. 
Suppose at least one of the following three conditions is true:
\begin{gather} 
\gamma \ge N,\quad \MUU >0 , \label{feFR3:b1}  \\
\gamma > \frac {2} {\alpha },\quad \MUU >0 ,  \label{feFR3:b2} \\
\gamma = \frac {2} {\alpha }, \quad \alpha >\frac {2} {N},\quad \MUU > [ \alpha ^{\frac {1} {\alpha  } }   [e^{ \Delta }  |\cdot |^{-\frac {2} {\alpha } }] (0) ]^{-1} .  \label{feFR3:b3}
\end{gather} 
Let $\Omega $ be a domain of $\R^N $, $x_0\in \Omega $, $0<\delta < \dist (x_0, \partial \Omega )$, and $\DI $ a measurable function $\Omega \to [0,\infty )$. If 
\begin{equation} \label{feFR2:3} 
\DI (x) \ge \MUU  |x-x_0|^{- \gamma }  \text{ for }   |x-x_0 |\le \delta 
\end{equation} 
then there is no local nonnegative solution of~\eqref{Inlh}. More precisely,
Properties~\eqref{eTHF2:1}, \eqref{eTHF2:2} and~\eqref{eTHF2:3} of Corollary~$\ref{eTHF2}$ hold. 
\end{cor}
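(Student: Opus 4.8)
The plan is to deduce the result from Proposition~\ref{eFR2} followed by Corollary~\ref{eTHF2}. Since translating $x_0$ to the origin alters neither the $L^\infty$-norms $\|e^{t\Delta_\Omega}\cdot\|_{L^\infty}$ nor condition~\eqref{feFR1:1} (one simply relabels $\Omega$ as $\Omega - x_0$ and $\DI$ as $\DI(\cdot + x_0)$), I may assume $x_0 = 0$, so that $\{|x|\le\delta\}\subset\Omega$ by the hypothesis $\delta < \dist(x_0,\partial\Omega)$. I would then set
\[
\DIb(y) = \begin{cases}\MUU|y|^{-\gamma} & |y|\le\delta,\\ 0 & |y|>\delta,\end{cases}
\]
so that $\DIb \le \DI$ a.e. on $\{|x|<\delta\}$ by~\eqref{feFR2:3}, while $\DIb \equiv 0$ on $\{|x|>\delta\}$, whence trivially $\DIb \in L^1(\{|x|>\delta\}) + L^\infty(\{|x|>\delta\})$. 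Taking $\rho = \delta$, all structural hypotheses of Proposition~\ref{eFR2} are satisfied, and it remains only to verify the key inequality~\eqref{fFBW1}. Once that is done, Proposition~\ref{eFR2} yields~\eqref{feFR1:1}, and Corollary~\ref{eTHF2} then gives Properties~\eqref{eTHF2:1}, \eqref{eTHF2:2} and~\eqref{eTHF2:3}, hence the absence of any local nonnegative solution.

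To verify~\eqref{fFBW1}, I would examine $I(t) := (\alpha t)^{\frac1\alpha}\int_{\R^N} G(t,0,y)\DIb(y)\,dy$. If $\gamma \ge N$ (which covers case~\eqref{feFR3:b1} entirely), the singularity $|y|^{-\gamma}$ is not locally integrable, so $I(t)\equiv +\infty$ and~\eqref{fFBW1} is immediate. If instead $\gamma < N$, then only~\eqref{feFR3:b2} or~\eqref{feFR3:b3} can hold, and the substitution $y = \sqrt t\,z$ gives
\[
I(t) = \alpha^{\frac1\alpha}\MUU(4\pi)^{-\frac N2}\, t^{\frac1\alpha - \frac\gamma2}\, J(t), \qquad J(t) = \int_{\{|z|\le \delta/\sqrt t\}} e^{-\frac{|z|^2}4}|z|^{-\gamma}\,dz .
\]
Since $\gamma < N$, the integrand is nonnegative and integrable on $\R^N$, so monotone convergence gives $J(t) \uparrow \int_{\R^N} e^{-|z|^2/4}|z|^{-\gamma}\,dz = (4\pi)^{\frac N2}[e^\Delta|\cdot|^{-\gamma}](0) \in (0,\infty)$ as $t\downarrow 0$. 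The sign of the exponent $\frac1\alpha - \frac\gamma2$ then decides the remaining cases.

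In case~\eqref{feFR3:b2} one has $\gamma > \frac2\alpha$, so $\frac1\alpha - \frac\gamma2 < 0$, whence $t^{\frac1\alpha - \frac\gamma2}\to+\infty$ while $J(t)$ stays bounded below by a positive constant; thus $I(t)\to+\infty$ and~\eqref{fFBW1} holds. The delicate case, which I expect to be the main point, is the critical case~\eqref{feFR3:b3}: here $\gamma = \frac2\alpha$, and the hypothesis $\alpha > \frac2N$ is precisely what guarantees $\gamma = \frac2\alpha < N$, so that the limit of $J(t)$ and the average $[e^\Delta|\cdot|^{-2/\alpha}](0)$ are finite. Now the exponent vanishes, $\frac1\alpha - \frac\gamma2 = 0$, so $I(t)$ converges to the \emph{exact} constant $\alpha^{\frac1\alpha}\MUU[e^\Delta|\cdot|^{-2/\alpha}](0)$, and the threshold assumption $\MUU > [\alpha^{\frac1\alpha}[e^\Delta|\cdot|^{-2/\alpha}](0)]^{-1}$ is exactly what makes this limit strictly larger than $1$. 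Hence~\eqref{fFBW1} holds in all three cases, and the conclusion follows as described. The only genuine care is needed in this borderline regime, where the scaling exponent is zero and the argument hinges on the sharp value of the Gaussian average $[e^\Delta|\cdot|^{-2/\alpha}](0)$ together with the matching threshold for $\MUU$.
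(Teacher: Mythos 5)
Your proof is correct and follows essentially the same route as the paper: reduce to Proposition~\ref{eFR2} plus Corollary~\ref{eTHF2} after translating $x_0$ to the origin, then verify~\eqref{fFBW1} by the parabolic scaling $y=\sqrt t\,z$, splitting into the non-locally-integrable case $\gamma\ge N$, the supercritical case $\gamma>\frac2\alpha$, and the critical case $\gamma=\frac2\alpha$ where the threshold on $\MUU$ is sharp. The only (harmless) difference is that you truncate $\DIb$ to $B_\delta$ and pass to the limit by monotone convergence, whereas the paper takes $\DIb=\MUU|x|^{-\gamma}$ on all of $\R^N$ and uses exact scaling homogeneity of $e^{t\Delta}$.
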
 

\begin{proof} 
By space-translation invariance of the equation, we may assume $x_0=0$. 
Applying Corollary~\ref{eTHF2} and Proposition~\ref{eFR2}, we need only show that $\DIb (x)= \MUU  |x|^{- \gamma }$ satisfies~\eqref{fFBW1}. 
This is immediate if $\gamma \ge N$, since $\DIb \not \in L^1_\Loc (\Omega ) $ in this case. 
Therefore, we now suppose 
\begin{equation*} 
\alpha >\frac {2} {N}  \text{ and }  \frac {2} {\alpha } \le \gamma <N.
\end{equation*} 
By scaling invariance 
\begin{equation*} 
e^{t \Delta } \DIb (0) = \MUU t^{- \frac {\gamma } {2}} [e^{ \Delta }  |\cdot |^{-\gamma }] (0) 
\end{equation*} 
so that
\begin{equation*} 
 (\alpha t)^{\frac {1} {\alpha  } }  e^{t \Delta } \DIb (0) = \MUU \alpha ^{\frac {1} {\alpha  } }  t^{\frac {1} {\alpha }- \frac {\gamma } {2}} [e^{ \Delta }  |\cdot |^{-\gamma }] (0) . 
\end{equation*} 
Therefore, if  $\frac {2} {\alpha }< \gamma <N$, then 
\begin{equation*} 
 (\alpha t)^{\frac {1} {\alpha  } }  e^{t \Delta }  \DIb  (0) \goto _{ t\downarrow 0 } \infty ,
\end{equation*} 
so that~\eqref{fFBW1} holds.
Furthermore, if~\eqref{feFR3:b3} is satisfied, then 
\begin{equation*} 
 (\alpha t)^{\frac {1} {\alpha  } }  e^{t \Delta }  \DIb  (0) = \MUU \alpha ^{\frac {1} {\alpha  } }   [e^{ \Delta }  |\cdot |^{-\frac {2} {\alpha } }] (0) >1
\end{equation*} 
so that~\eqref{fFBW1} holds.
\end{proof} 

\begin{rem} \label{eCCM1} 
Here are some comments on the above results.
\begin{enumerate}[{\rm (i)}] 

\item 
Initial values of the form~\eqref{feFR2:3} have been used in~\cite{LaisterRS} to prove nonexistence of nonnegative local solutions. 
(See the proofs of Lemma~4.1 and Theorem~4.1 in~\cite{LaisterRS}.)

\item \label{eCCM1:2} 
The conditions~\eqref{feFR3:b1}, \eqref{feFR3:b2} and~\eqref{feFR3:b3} only depend on the space dimension $N$ and on $\alpha >0$.
In particular, the conditions under which~\eqref{feFR2:3} implies the nonexistence of local nonnegative solutions of~\eqref{Inlh} are independent of the domain $\Omega $ and of $x_0\in \Omega $.

\item \label{eCCM1:1} 
Suppose $\DIb$ satisfies~\eqref{fFBW1} and let $p\ge 1$, $p\ge \frac {N\alpha } {2}$. If $\psi \in L^p (\R^N ) $, then 
\begin{equation} \label{fCCM1} 
\limsup  _{ t\downarrow 0 } (\alpha t)^{\frac {1} {\alpha  } }  e^{t \Delta } ( \DIb +\psi ) (0) >1 .
\end{equation} 
Indeed, since $p\ge \frac {N\alpha } {2}$, the estimate $\| e^{t\Delta } \psi \| _{ L^\infty  }\le t^{-\frac {N} {2p}}  \| \psi \| _{ L^p }$ (and a density argument if $p=\frac {N\alpha } {2}$) implies that  $t^{\frac {1} {\alpha }}  \| e^{t\Delta } \psi \| _{ L^\infty  } \to 0$ as $t\to 0$.

\item  The assumption~\eqref{fFBW1} in Lemma~\ref{eFR2} means that $\DIb $ is sufficiently singular at $x=0$. It does not mean, however, that $\DIb (x) \to \infty $ as $ |x| \to 0$.
Indeed, it can be that $\DIb $ satisfies~\eqref{fFBW1} and $\DIb (x)=0$ for some $x$ arbitrarily close to $0$. Here is such an example. 
Let $\alpha >0$, $\gamma >0$,  $\MUU >0$ be such that~ \eqref{feFR3:b2} or~\eqref{feFR3:b3} holds, and let $\DIbd (x)= \MUU  |x|^{- \gamma }$. 
It follows (see the proof of Corollary~\ref{eFR3}) that $\DIbd$ satisfies~\eqref{fFBW1}. 
Fix $p\ge 1$, $p\ge \frac {N\alpha } {2}$. 
Consider now a sequence $(a_j) _{ j\ge 1 } \subset (0,\infty )$ such that $a_j \downarrow 0$, and let $(b_j) _{ j\ge 1 }$ satisfy  $b_j > a_j > b _{ j+1 }$ for all $j\ge 1$. Setting
\begin{equation*} 
\theta = \DIbd \sum_{ j=1 }^\infty  1 _{ \{ a_j <  |x| < b_j \} }
\end{equation*} 
it is clear that if we choose the sequence $(b_j) _{ j\ge 1 }$ so that $b_j -a_j$ is sufficiently small, then $\theta  \in L^p (\R^N ) $. 
If $\DIb = \DIbd - \theta $, then $\DIb \ge 0$ and it follows from Property~\eqref{eCCM1:1} above that $\DIb $ satisfies~\eqref{fFBW1}. 

\item 
The case of condition~\eqref{feFR3:b2} in Corollary~\ref{eFR3} has already been given in the proof of Theorem~15.3 in~\cite{QuittnerS}, see in particular formula~(15.30). 
\end{enumerate} 
\end{rem} 

\section{Self-similar solutions} \label{sSSS} 

We recall that a self-similar solution of~\eqref{Inlh} is a solution of the form
\begin{equation} \label{fpr1} 
u(t, x) = t^{-\frac {1} {\alpha }} f \Bigl( \frac {x} {\sqrt t} \Bigr),
\end{equation}
where $f  : \R^N \to \R$ is  the profile of the
self-similar solution $u$ given by \eqref{fpr1}.  In order for $u$ given by 
\eqref{fpr1} to be a classical solution of  \eqref{Inlh} for $t > 0$,
the profile $f$ must be of class $C^2$ and satisfy the elliptic equation
\begin{equation} \label{fpr2} 
\Delta f + \frac {1} {2} x\cdot \nabla f + \frac {1} {\alpha } f +  |f|^\alpha f  = 0.
\end{equation} 
A radially symmetric regular self-similar solution of~\eqref{Inlh} is a self-similar solution with a profile $f$ which is of class $C^2$ and radially symmetric. We write, by abuse of notation, $f(r) = f(x)$ where $r= |x|$, so that
$f : [0, \infty) \to \R$ is of class $C^2$, and satisfies the following
initial value ODE problem,
\begin{gather}
\displaystyle f''(r) + \Big(\frac{N-1}{r} +  \frac{r}{2}\Big)f'(r) + \frac{1}{\alpha}f(r) +  |f(r)|^{\alpha}f(r) = 0 \label{fpr3}\\
f(0) = a, \quad f'(0) = 0 \label{fpr3:1}
\end{gather}
for some $a\in \R$.
 If $u$ is a radially symmetric regular self-similar solution of~\eqref{Inlh}, then there exists $\mu \in \R$ such that $r^{\frac {2} {\alpha }} f(r) \to \mu $ as $r\to \infty $ (see~\cite[Theorem~5$'$]{HarauxW}) so that $f\in  L^r (\R^N ) $ for all $r\ge 1$, $r>\frac {N\alpha } {2}$. 
Moreover,   $u$ has the initial value $\mu  |x|^{- \frac {2} {\alpha }}$ in the sense that $u(t,x) \to \mu  |x|^{- \frac {2} {\alpha }}$ as $t\downarrow 0$, uniformly on $\{  |x|>\varepsilon  \}$ for every $\varepsilon >0$.
See~\cite[page~170]{HarauxW}.
It is known that if $\alpha <\frac {4} {N-2}$ and $\mu \in \R$, then for the initial value $\mu  |x|^{- \frac {2} {\alpha }}$ there are infinitely many global, self-similar solutions of~\eqref{Inlh}. 
More precisely, the following holds.

\begin{prop}[\cite{CDNW1,HarauxW, Weissler6, Yanagida}] \label{eRESS1} 
Let $N\ge 1$ and $0< \alpha < \frac {4} {N-2}$ ($0<\alpha <\infty $ if $N= 1,2$). 
Given any $\MUU \in \R$, there exists $m_0\ge 0$ such that for all $m \ge m_0$ there exist at least two different, radially symmetric regular self-similar solutions $U$ of \eqref{Inlh} with initial value $\DI=  \MUU  |x|^{-\frac {2} {\alpha }}$ in the sense that 
\begin{equation} \label{fEP2} 
U(t) \goto _{ t\downarrow 0 } \DI  \text{ in } L^q  ( \{  |x|>\varepsilon  \} )  \text{ for all } \varepsilon >0  \text{ and }  1\le q\le \infty ,  q >\frac {N\alpha } {2} ,
\end{equation} 
and also that
\begin{equation} \label{fEP3} 
U(t) \goto _{ t\downarrow 0 } \DI  \text{ in } L^p (\R^N ) +L^q  (\R^N )   \text{ for all }  1\le p < \frac {N\alpha } {2} <q \le \infty  
\end{equation}
if $\alpha >\frac {2} {N}$, and whose profiles have exactly $m$ zeros. These solutions are such that $U \in C^1((0,\infty ), L^r (\R^N ) )$ and $\Delta U,  | U |^\alpha U\in C((0,\infty ), L^r (\R^N ) )$ for all $1\le r\le \infty $, $r> \frac {N\alpha } {2}$. 

Furthermore, if  $\alpha >\frac {2} {N}$,  the solutions satisfy the integral equation
\begin{equation} \label{fNZ2:b1} 
U(t) =  \MUU  e^{t \Delta  }  |\cdot |^{-\frac {2} {\alpha }}  + \int _0^t e^{ (t-s) \Delta }  |U(s)|^\alpha U(s)\, ds
\end{equation} 
where the integral is norm convergent in $L^r (\R^N ) $ for all $r\ge 1$, $ r >\frac {N\alpha } {2(\alpha +1) }$, and  each term is in $C((0,\infty ), L^r (\R^N ) )$ for all $r> \frac {N\alpha } {2} > 1$. Moreover, the map $t \mapsto U (t) - e^{t \Delta } \DI $ is in $ C([0,\infty ), L^r (\R^N ) )$ for all $r \ge 1$ such that $\frac {N\alpha } {2(\alpha +1) } < r < \frac {N\alpha } {2}$.
\end{prop}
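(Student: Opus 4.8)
The plan is to build the solutions $U$ from radially symmetric profiles solving the ODE \eqref{fpr3}--\eqref{fpr3:1}, and then to convert the known properties of these profiles into the claimed statements about $U$ as a curve in various function spaces. The heart of the matter is the profile analysis via a shooting argument. For each $a\in\R$ the initial value problem \eqref{fpr3}--\eqref{fpr3:1} has a unique global solution $f_a\in C^2([0,\infty))$, and in the subcritical range $0<\alpha<\frac{4}{N-2}$ one shows that $r^{\frac{2}{\alpha}}f_a(r)$ converges to a finite limit $\mu(a)$ as $r\to\infty$, the generic slow decay of solutions of \eqref{fpr3}. The key step is then to prove that, for a prescribed $\MUU$, there exists $m_0$ such that for every $m\ge m_0$ there are at least two values of $a$ for which $f_a$ has exactly $m$ zeros on $(0,\infty)$ and $\mu(a)=\MUU$. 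This is precisely the content of \cite{HarauxW, Weissler6, Yanagida} and \cite{CDNW1}, established by tracking how the zero count of $f_a$ and the coefficient $\mu(a)$ depend on the shooting parameter $a$, together with oscillation estimates forcing the number of zeros to grow without bound; I would invoke these results rather than redo the phase-plane analysis.

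Given such a profile $f$, I would set $U(t,x)=t^{-\frac{1}{\alpha}}f(x/\sqrt t)$, which by construction is a classical solution of \eqref{Inlh} on $(0,\infty)\times\R^N$. Since $f\in L^r(\R^N)$ for all $r\ge1$ with $r>\frac{N\alpha}{2}$ (from $f(r)\sim\MUU\,r^{-\frac{2}{\alpha}}$ at infinity and the $C^2$ regularity at the origin), the scaling identity $\|U(t)\|_{L^r}=t^{-\frac{1}{\alpha}+\frac{N}{2r}}\|f\|_{L^r}$ and the analogous identities for $\Delta U$ and $|U|^\alpha U$ yield $U\in C^1((0,\infty),L^r)$ and $\Delta U,\,|U|^\alpha U\in C((0,\infty),L^r)$ for those $r$, since differentiating under the scaling reduces everything to continuity of dilation and translation on $L^r$.

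For the initial trace I would use the pointwise asymptotics $r^{\frac{2}{\alpha}}f(r)\to\MUU$. On $\{|x|>\varepsilon\}$ this gives $U(t,x)\to\MUU\,|x|^{-\frac{2}{\alpha}}$ uniformly, and an elementary bound controls the $L^q$ norm on that region, giving \eqref{fEP2}; when $\alpha>\frac{2}{N}$, splitting both $\MUU\,|\cdot|^{-\frac{2}{\alpha}}$ and $U(t)$ into an $L^p$ part near the origin and an $L^q$ part away from it yields \eqref{fEP3} after rescaling and dominated convergence. The integral equation \eqref{fNZ2:b1} is then verified by applying the variation-of-constants formula to $U$ on $(\tau,t)$ and letting $\tau\downarrow0$, using \eqref{fEP2}--\eqref{fEP3} to identify the boundary term with $\MUU\,e^{t\Delta}|\cdot|^{-\frac{2}{\alpha}}$; convergence of the Duhamel integral in $L^r$ for $r>\frac{N\alpha}{2(\alpha+1)}$ and continuity of $t\mapsto U(t)-e^{t\Delta}\DI$ into $L^r$ for $\frac{N\alpha}{2(\alpha+1)}<r<\frac{N\alpha}{2}$ follow from the self-similar scaling of $|U(s)|^\alpha U(s)$ together with the smoothing estimates for $e^{t\Delta}$, which are exactly the function-space statements of \cite{CDNW1}.

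The main obstacle is the first step: the shooting and multiplicity argument for \eqref{fpr3}--\eqref{fpr3:1}, namely controlling the number of zeros of $f_a$ and the map $a\mapsto\mu(a)$ well enough to produce, for every large $m$, at least two profiles with exactly $m$ zeros and the prescribed asymptotic coefficient $\MUU$. Once the profiles are in hand, every remaining assertion is a consequence of the self-similar scaling and standard heat-semigroup estimates.
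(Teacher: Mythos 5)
Your proposal is correct and follows essentially the same route as the paper: the existence and multiplicity of profiles with exactly $m$ zeros and prescribed asymptotic coefficient $\MUU$ is delegated to \cite{CDNW1,HarauxW,Weissler6,Yanagida} (the paper cites Theorems~1.1 and~1.4 of \cite{CDNW1} for $\MUU\neq0$ and the older references for $\MUU=0$), and the remaining function-space assertions, in particular the convergence of the Duhamel integral for $r>\frac{N\alpha}{2(\alpha+1)}$, are obtained exactly as you indicate, by reducing to the profile $f$ via the self-similar scaling identity and the $L^q\to L^r$ smoothing of $e^{t\Delta}$.
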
 

\begin{proof} 
The case $\MUU\not = 0$ follows from~\cite[Theorems~1.1 and~1.4]{CDNW1}, and the case $\MUU =0$ follows from~\cite{HarauxW, Weissler6, Yanagida}, see~\cite[Remark~1.2~(iv)]{CDNW1}.
In the case $\alpha >\frac {2} {N}$, the convergence of the integral term of~\eqref{fNZ2}  in $L^r (\R^N ) $ for all $r\ge 1$ such that $r \ge 1$ such that $\frac {N\alpha } {2(\alpha +1) } < r < \frac {N\alpha } {2}$ follows from the estimate~(2.29) in~\cite{CDNW1}. 
Furthermore, using formula~\eqref{fpr1} and the commutation relationship of the heat semigroup with space dilations (see e.g.~\cite[formula~(3.1)]{DCDS}), one can verify the following identity
\begin{equation*} 
\int _0^t  \| e^{ (t-s) \Delta } ( |U(s)|^\alpha U(s) ) \| _{ L^r } ds  = t^{ \frac {N} {2r} - \frac {1} {\alpha }} \int _1^\infty  s^{- \frac {N} {2r} + \frac {1} {\alpha } -1 }  \| e^{ (s-1 )\Delta } ( |f|^\alpha f) \| _{ L^r } ds .
\end{equation*} 
The factor $t^{ \frac {N} {2r} - \frac {1} {\alpha }}$ converges to $0$ as $t\to 0$ if and only if $r < \frac {N\alpha } {2}$. We claim that the integral is convergent for all $r\ge 1$ with $r> \frac {N\alpha } {2 (\alpha +1)}$. Indeed, 
$  \| e^{ (s-1 )\Delta } ( |f|^\alpha f) \| _{ L^r }  \le  \| \, |f|^\alpha f \| _{ L^r }  <\infty  $ because $r >  \frac {N\alpha } {2 (\alpha +1)}$, so we need only study the integrability at infinity. We fix any $1\le q\le r$ such that $\frac {N\alpha } {2 (\alpha +1)} < q < \frac {N\alpha } {2}$, and we estimate
$  \| e^{ (s-1 )\Delta } ( |f|^\alpha f) \| _{ L^r } \le (s-1) ^{\frac {N} {2r} - \frac {N} {2q}}  \| f \|^{\alpha +1} _{ L^{q(\alpha +1)} }$,
so that
\begin{equation*} 
\int _2^\infty s^{- \frac {N} {2r} + \frac {1} {\alpha } -1 }  \| e^{ (s-1 )\Delta } ( |f|^\alpha f) \| _{ L^r } ds \le C  \| f \| _{ L^{q(\alpha +1)} } ^{\alpha +1} \int _2 ^\infty  s^{- \frac {N} {2q} + \frac {1} {\alpha } -1 } ds <\infty ,
\end{equation*} 
which completes the proof.
\end{proof} 

\section{Perturbations of singular solutions} \label{sPSS} 

In this section, we establish a perturbation result for solutions of~\eqref{Inlh} with 
singular initial values. More precisely, we start with a known solution $U$ of~\eqref{Inlh}
which is classical for $t>0$ and develops a singularity as $(t,x) \to (0,0)$. For some 
domain $\Omega \subset \R^N $, which may be smaller than the domain where
$U$ is defined but contains $0$, we look
for solutions $u$ of~\eqref{NLHD} of the the form 
\begin{equation} \label{fPT1} 
u (t, x) = \Psi (x) U (t, x) + w (t, x),
\end{equation} 
where $\Psi $ is a smooth function on $\overline\Omega $, identically $1$ near $0$,
and $w$ is bounded.
Such a solution $u$ captures the initial singularity of $U$ near $0$, modified
by some bounded function $w(0,x)$.

Our motivating example is the case where $U$ is a radially symmetric regular self-similar solution, i.e a solution of~\eqref{Inlh} (on $\R^N $) of form $U(t,x)= t^{-\frac {1} {\alpha }} f(\frac { |x|} {\sqrt t})$ with  $f\in C^2 ([0, \infty ))$. It is well known that  $ |f(r)| + (1+r)  |f'(r)|\le C (1+ r^2)^{- \frac {1} {\alpha }}$ for all $r\ge 0$ (see~\cite[Proposition~3.1]{HarauxW}), which implies
\begin{equation} \label{fSSS2} 
 | U(t,x) |^\alpha +  | \nabla U(t,x) |^{\frac {2\alpha } {\alpha +2}} \le \frac {C} {t +  |x|^2} \le C \min \{ t^{-1},  |x|^{-2} \}.
\end{equation} 
More generally, we consider a domain $\Omega \subset \R^N $, $\delta >0$, $T>0$, $\CA1\ge 0$, and a function $U : (0, T) \times \Omega \to \R$ which satisfies
\begin{gather} 
U  \text{ is }C^2  \text{ in }x  \text{ and }C^1  \text{ in } t  \label{ePSSZ1:1b1}\\
U_t - \Delta U =  |U|^\alpha U  \text{ in }(0,T) \times \Omega   \label{ePSSZ1:2}
 \\
 |U (t, x) |^\alpha \le  \CA1  \min \{ t^{-1},  |x|^{-2} \}   \text{ in }(0,T) \times \Omega   \label{ePSSZ1:3} \\
  |\nabla U (t, x) |^\alpha \le  \CA1   \text{ in }(0,T) \times ( \Omega \cap \{ |x|>\delta \} ) .\label{ePSSZ1:4} 
\end{gather} 
If $w$ satisfies \eqref{fPT1}, then the equation for $w$ is
\begin{equation} \label{fPT2} 
w_t - \Delta w = \Md w
\end{equation} 
where
\begin{equation} \label{fPT3}
\Md w= 2\nabla U \cdot \nabla \Psi + U\Delta \Psi + | \Psi U+w|^\alpha (\Psi U+w) - \Psi  |U|^\alpha U.
\end{equation}
The corresponding integral equation is given by
\begin{equation}  \label{fPT4} 
w (t) = e^{t \Delta _\Omega } \DIbd + \int _0^t e^{(t -s) \Delta _\Omega } \Md w (s) \, ds
\end{equation} 
where $\DIbd$ is a prescribed initial value.
If indeed $w$ is a solution of equation~\eqref{fPT4}, it necessarily vanishes on
$\partial \Omega $ (if it is nonempty) for $t>0$, and so in order for $u$ to vanish on $\partial \Omega $, we would need to require that $\Psi$ vanish on $\partial \Omega $.
It turns out that such a condition on $\Psi$ is not needed for the construction
of solutions $w$.

To solve equation~\eqref{fPT4}, it is natural to use a contraction mapping argument. 
A quick look at~\eqref{fPT3} shows that the term $2\nabla U \cdot \nabla \Psi + U\Delta \Psi $ is easily controlled since we suppose that $\Psi \equiv 1$ in a neighborhood of $0$. On the other hand, assuming that $w$ is bounded and that $U$ is singular at $(t,x)= (0,0)$, the remaining term behaves like $  |U|^\alpha w$ near $(t,x)= (0,0)$.
The singularity allowed by~\eqref{fSSS2} corresponds to the ``critical case" and can be treated by the method used in~\cite[Theorem~6.1]{CW}. Unfortunately, this method only works for small data, and in particular for perturbations of small self-similar solutions.

Our solution to this difficulty is to use a contraction mapping argument in a class of functions $w$ that are sufficiently small as $(t,x) \to (0,0)$ so as to balance the singularity of $ |U|^\alpha $. The major problem is then to find a class of functions $w$ that have this behavior near $0$, and which at the same time is preserved by the operator associated with the contraction mapping argument. To achieve this, we consider a class of functions of the form $\{ w\in L^\infty ( (0,T) \times \Omega );\,  |w|\le \Theta   \}$, where the bounded function $\Theta$ tends to $0$ as a power of $t$ as $t \to 0$, on some neighborhood of $x = 0$.  As a consequence of
this method, $\DIbd $ must vanish in a neighborhood of the origin.

Our main result in this section is the following.

\begin{thm} \label{ePSSZ1} 
Let $\Omega $ be a domain of $\R^N $, possibly $\Omega =\R^N $, and  $\delta >0$.
Suppose
\begin{equation}  \label{ePSSZ1:1} 
\{  |x|<\delta  \} \subset \Omega 
\end{equation} 
and let $U $ satisfy~\eqref{ePSSZ1:1b1}--\eqref{ePSSZ1:4}.
Let 
\begin{equation} \label{ePSSZ1:4:b} 
\Psi \in C^2  (\R^N  ) \cap W^{2, \infty }  (\R^N  ), \quad 0\le \Psi \le 1,\quad   \Psi (x)= 1  \text{ on }  \{  |x|<\delta  \}
\end{equation} 
and let 
\begin{equation} \label{ePSSZ1:5} 
\DIbd \in L^\infty  (\Omega ) , \quad \DIbd (x) =0 \text{ a.e. on } \{  |x|<\delta  \}.
\end{equation} 
It follows that there exist $ T >0 $ and $w\in L ^\infty ((0, T ) \times \Omega )$ such that the following properties hold.
\begin{enumerate}[{\rm (i)}] 

\item \label{ePSSZ1:10} $\Md w \in  L^\infty ((0, T ) \times \Omega )$ where $\Md w$ is defined by~\eqref{fPT3}. 

\item \label{ePSSZ1:11} $w$ is a solution of equation~\eqref{fPT4}. 
Moreover, $w$ is the unique solution of~\eqref{fPT4} in the class $\Ens$ defined by~\eqref{fPSDZ14} below, where $\Theta $ is given by~\eqref{fDTTabZ1},  $m$ satisfies~\eqref{fDFM}, and $T$ satisfies~\eqref{fDFM:1}, \eqref{fDFM:2} and~\eqref{fDFM:3}.

\item \label{ePSSZ1:12} $ \| w (t) - e^{t \Delta _\Omega } \DIbd \| _{ L^\infty  }\le Ct $ for $0<t<T $.
\end{enumerate} 
\end{thm}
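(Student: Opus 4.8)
The plan is to solve the integral equation~\eqref{fPT4} by a contraction argument. Writing $\Map(w)(t) = e^{t\Delta _\Omega }\DIbd + \int _0^t e^{(t-s)\Delta _\Omega }\Md w(s)\,ds$, I would run Banach's fixed point theorem on the set $\Ens $ of~\eqref{fPSDZ14}, namely $\Ens = \{w\in L^\infty ((0,T)\times \Omega ) : |w|\le \Theta \}$, which is closed in $L^\infty $ and hence complete, equipped with the weighted distance $d(w_1,w_2)=\esup |w_1-w_2|/\Theta $. First I would split $\Md w = B + \Nd (w)$, where $B = 2\nabla U\cdot \nabla \Psi + U\Delta \Psi $ is bounded and supported in $\{|x|\ge \delta \}$ (because $\Psi \equiv 1$ on $\{|x|<\delta \}$ by~\eqref{ePSSZ1:4:b}), and $\Nd (w) = |\Psi U + w|^\alpha (\Psi U + w) - \Psi |U|^\alpha U$. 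The elementary bound $\bigl||a+b|^\alpha (a+b)-|a|^\alpha a\bigr|\le C_\alpha (|a|^\alpha +|b|^\alpha )|b|$ together with~\eqref{ePSSZ1:3} gives $|\Nd (w)|\le C(|U|^\alpha +|w|^\alpha )|w|\le C\bigl(A_1\min \{s^{-1},|x|^{-2}\}+|w|^\alpha \bigr)|w|$, together with its Lipschitz form $|\Nd (w_1)-\Nd (w_2)|\le C(|U|^\alpha +|w_1|^\alpha +|w_2|^\alpha )|w_1-w_2|$.

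The core of the proof is the choice of the weight $\Theta $ of~\eqref{fDTTabZ1} and the verification of a single pointwise estimate. I expect $\Theta $ to be comparable to $Kt^m$ on a parabolic neighborhood of the origin and to a constant $M\ge \|\DIbd \|_{L^\infty }$ away from it, with the exponent $m$ satisfying~\eqref{fDFM} (i.e.\ $m$ large relative to $CA_1$). The decisive estimate is
\[
\int _0^t e^{(t-s)\Delta _\Omega }\Bigl[C\bigl(A_1\min \{s^{-1},|x|^{-2}\}+\Theta ^\alpha \bigr)\,\Theta (s,\cdot )\Bigr](x)\,ds \le \kappa \,\Theta (t,x)
\]
for some $\kappa <1$. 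The mechanism is that $\min \{s^{-1},|x|^{-2}\}$ equals $s^{-1}$ only where $|x|\le \sqrt s$, precisely the region on which $\Theta $ is comparable to $s^m$, so that $\int _0^t s^{-1}s^m\,ds$ is of order $t^m/m$ and the ratio to $\Theta $ is of order $CA_1/m<1$ once $m$ is large; the superlinear contribution $\Theta ^\alpha \cdot \Theta $ is bounded and contributes a factor $O(t)$, made small by shrinking $T$.

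Granting this estimate, the two hypotheses of the fixed point theorem follow. For stability $\Map (\Ens )\subset \Ens $, I would bound $|\Map w|\le |e^{t\Delta _\Omega }\DIbd | + \int _0^t e^{(t-s)\Delta _\Omega }|B|\,ds + \kappa \Theta $; since $\DIbd $ and $B$ vanish on $\{|x|<\delta \}$, the first two terms vanish faster than any power of $t$ near the origin while staying $\le \|\DIbd \|_{L^\infty }+t\|B\|_{L^\infty }$ elsewhere, so choosing $M$ large and $T$ small (conditions~\eqref{fDFM:1}--\eqref{fDFM:3}) yields $|\Map w|\le \Theta $. For the contraction, the Lipschitz bound and the same estimate (with adjusted constants) applied to $|w_1-w_2|\le d(w_1,w_2)\,\Theta $ give $d(\Map w_1,\Map w_2)\le \kappa \,d(w_1,w_2)$. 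The fixed point $w$ is then the required solution, proving~\eqref{ePSSZ1:11}. Property~\eqref{ePSSZ1:10} follows because $\min \{s^{-1},|x|^{-2}\}\Theta $ is comparable to $s^{m-1}$ and hence bounded once $m\ge 1$, so $\Md w\in L^\infty $; and~\eqref{ePSSZ1:12} is immediate from $w(t)-e^{t\Delta _\Omega }\DIbd =\int _0^t e^{(t-s)\Delta _\Omega }\Md w(s)\,ds$ and $\|\Md w\|_{L^\infty }<\infty $.

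The main obstacle is the pointwise estimate above: one must show that the explicitly constructed $\Theta $ is a genuine supersolution for the Duhamel operator with the singular potential, \emph{uniformly in $x$}. This forces a careful case analysis of the $s$- and $y$-integrations over the regions $|x|\le \sqrt s$, $|x|>\sqrt s$ and $|x|\le \delta $, $|x|>\delta $, and in particular a delicate matching of the $t^m$ profile near the origin to the constant profile away from it across the transition zone, tracking throughout the dependence on $A_1$, $m$ and $T$. This is exactly the tension flagged before the statement --- that $\Ens $ must at once absorb the singularity of $|U|^\alpha $ and be preserved by $\Map $ --- and the precise form of~\eqref{fDTTabZ1} together with the constraints~\eqref{fDFM} and~\eqref{fDFM:1}--\eqref{fDFM:3} is what makes it hold.
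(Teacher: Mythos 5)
Your overall architecture is the same as the paper's: Banach's fixed point theorem on $\Ens =\{w:|w|\le \Theta \}$ with the weighted distance $\esup |w_1-w_2|/\Theta $, the splitting of $\Md w$ into the bounded part $2\nabla U\cdot \nabla \Psi +U\Delta \Psi $ (supported in $\{|x|\ge \delta \}$) plus the nonlinear difference, the elementary Lipschitz bounds, and the roles of ``$m$ large'' and ``$T$ small''. Properties (i) and (iii) are handled correctly. But the proof stands or falls on the ``decisive estimate'' $\int _0^t e^{(t-s)\Delta _\Omega }\bigl[(A_1\min \{s^{-1},|x|^{-2}\}+\Theta ^\alpha )\Theta \bigr]ds\le \kappa \Theta $ with $\kappa <1$, and here there is a genuine gap: you defer it as ``the main obstacle'', and the weight you propose to prove it with does not satisfy it. With $\Theta $ comparable to $Kt^{m}$ on a parabolic neighborhood of the origin and to a constant $M$ outside, take a point with $|x|^2=2t$ (say), so $\Theta (s,x)\simeq M$ for all $0<s\le t$ and $\min \{s^{-1},|x|^{-2}\}=|x|^{-2}$ on $(0,t)$. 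Even ignoring the spatial averaging of the semigroup, the integral is of size $A_1\,t|x|^{-2}M\simeq \tfrac 12 A_1 M\simeq \tfrac 12A_1\Theta (t,x)$: the ratio is of order $A_1$, independent of both $m$ and $T$. So your mechanism (``the ratio is of order $CA_1/m$'') is correct only in the innermost region where $|U|^\alpha \le A_1 s^{-1}$ meets $\Theta \simeq Ks^{m/2}$; in the transition annulus $\sqrt t\lesssim |x|\lesssim \delta $ it fails whenever $A_1$ is not small. This is exactly the ``critical case'' obstruction the paper attributes to the method of \cite[Theorem~6.1]{CW}; your weight reproduces that obstruction rather than circumventing it.

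The paper's construction is designed precisely to kill this transition region, and it is not parabolic: $\Theta (t,x)=K\bigl(t^{m/2}+\sum _{j=1}^m t^{(j-1)/2}\chi _j(x)\bigr)$, where $\chi _j$ vanishes on the \emph{fixed} ball $\{|x|\le 2^{-j}\delta \}$. On the dyadic annulus indexed by $j$ the factor $|x|^{-2}$ is bounded by $a_j^{-2}\le a_m^{-2}$ (a fixed, large, but $t$-independent constant), and the term $|x|^{-2}s^{(j-1)/2}\chi _j$ integrates in time to $a_m^{-2}\,t^{(j+1)/2}\chi _{j+1}$ --- one power of $t^{1/2}$ better than the term $t^{j/2}\chi _{j+1}$ already present in $\Theta $. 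The contraction constant in this region is therefore $T^{1/2}(1+a_m^{-2})$, made small by shrinking $T$ \emph{after} $m$ is fixed (condition~\eqref{fDFM:2}); only the innermost term $s^{(m-2)/2}$ produces the $1/m$ gain you describe (condition~\eqref{fDFM}). The other ingredient you are missing is the control of how the heat semigroup degrades the spatial localization of the $\chi _j$: Lemma~\ref{ePSS2} shows $e^{t\Delta }\chi _j\le \chi _{j+1}+2^{N/2}e^{-a_{j+1}^2/8t}$, i.e.\ one dyadic level is lost per application of Duhamel and the remainder is exponentially small (absorbed by conditions~\eqref{fDFM:1} and~\eqref{fDFM:3}). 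Without this ladder of cutoffs --- or an equivalent device that decouples the spatial scale of the weight from the time variable in the annulus $\sqrt t\lesssim |x|\lesssim \delta $ --- the supersolution estimate, and hence the invariance of $\Ens $ and the contraction, cannot be established for general $A_1$.
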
 

\begin{rem} 
For the purposes of Theorem~\ref{ePSSZ1}, the right-hand side of equation~\eqref{fPT4} is interpreted as 
\begin{equation}  \label{fPT4b1} 
 \int _\Omega G_\Omega (t,x,y) \DIbd (y)\, dy +   \int _0^t \int _\Omega  G _\Omega  (t-s ,x,y) \Md w (s, y) \, dy \, ds .
\end{equation} 
The above integrals are well defined for all $(t,x) \in (0,T) \times \Omega $, see Appendix~\ref{sLHEOM}. 
The reason for this particular formulation is to avoid having to impose
regularity conditions on $\Omega $.

\end{rem} 

For the proof of Theorem~\ref{ePSSZ1} we will use several lemmas. 
We first introduce auxiliary functions that will be crucial in the fixed-point argument.

\begin{lem} \label{ePSS2} 
Let $\theta \in C^\infty (\R, \R)$ be nondecreasing and satisfy
\begin{equation} \label{fDFNt} 
\begin{cases} 
\theta (s)= 0 & s\le 1 \\
\theta (s)= 1 & s\ge 2 ,
\end{cases} 
\end{equation} 
let $\delta >0$, and set  $a_j= 2^{-j} \delta$ for $j\ge 0$. The sequence $(\chi _j) _{ j\ge 0 } \subset C^\infty  (\R^N ) $ defined by
\begin{equation}  \label{fePSS1:1} 
\chi _j (x) = \theta   \Bigl( \frac { |x|} {a_j} \Bigr) .
\end{equation} 
satisfies the following properties.

\begin{enumerate}[{\rm (i)}] 

\item \label{ePES1:1} 
$  \| \chi _j \| _{ L^\infty  }\le 1 $ for all $j\ge 0$.

\item \label{ePES1:2b} 
$ |x|^{-2}\chi _j (x) \le a_j^{-2} \chi _j (x) $ for $x\in \R^N $ and all $j\ge 0$.

\item \label{ePES1:5} 
$e ^{t \Delta } \chi _j \le \chi  _{ j+1 } + 2^{\frac {N} {2}}e^{-\frac { a _{ j+1 }^2} {8t}}$ for all $j\ge 0$ and all $t\ge 0$.

\item \label{ePES1:4} 
$e ^{t \Delta } (  |\cdot |^{-2} \chi _j ) \le a_j^{-2} \chi  _{ j+1 } +a_j^{-2}  2^{\frac {N} {2}}e^{-\frac { a _{ j+1 }^2} {8t}} $ for all $j\ge 0$ and all $t\ge 0$.
\end{enumerate} 
\end{lem}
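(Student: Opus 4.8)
The plan is to treat the four assertions in increasing order of difficulty, the real content being~(iii), from which~(iv) follows immediately.

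For~(i) and~(ii) I would argue purely from the shape of $\theta$. Since $\theta$ is nondecreasing with $\theta\equiv 0$ on $(-\infty,1]$ and $\theta\equiv 1$ on $[2,\infty)$, it satisfies $0\le\theta\le 1$ everywhere, so $0\le\chi_j\le 1$ and~(i) follows. For~(ii), note that $\chi_j(x)=\theta(|x|/a_j)>0$ forces $|x|/a_j>1$, i.e. $|x|>a_j$; hence on the support of $\chi_j$ one has $|x|^{-2}\le a_j^{-2}$, and multiplying by $\chi_j\ge 0$ gives~(ii) (both sides vanish where $\chi_j=0$).

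The heart of the matter is~(iii). I would write $e^{t\Delta}\chi_j(x)=\int_{\R^N}G(t,x,y)\chi_j(y)\,dy$ and split the integral over the ball $B=\{y:|y-x|<a_{j+1}\}$ and its complement. On $\R^N\setminus B$ I would discard $\chi_j\le 1$ and estimate the Gaussian tail: writing $z=y-x$ and using $e^{-|z|^2/(4t)}\le e^{-a_{j+1}^2/(8t)}e^{-|z|^2/(8t)}$ on $\{|z|\ge a_{j+1}\}$, this part is bounded by $e^{-a_{j+1}^2/(8t)}(4\pi t)^{-N/2}\int_{\R^N}e^{-|z|^2/(8t)}\,dz=2^{N/2}e^{-a_{j+1}^2/(8t)}$, which is exactly the error term. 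The crux is the contribution of $B$, where I claim the pointwise bound $\chi_j(y)\le\chi_{j+1}(x)$ for every $y\in B$; granting this, that part is at most $\chi_{j+1}(x)\int_B G(t,x,y)\,dy\le\chi_{j+1}(x)$ (using $\int_{\R^N}G(t,x,y)\,dy=1$ and $G\ge 0$), and adding the two pieces yields~(iii). To prove the claim, observe that if $\chi_j(y)>0$ then $|y|>a_j=2a_{j+1}$, whence $|x|\ge|y|-|y-x|>a_{j+1}$ and therefore $|y|\le|x|+a_{j+1}<2|x|$; since $a_j=2a_{j+1}$ this gives $|y|/a_j<|x|/a_{j+1}$, and the monotonicity of $\theta$ yields $\chi_j(y)=\theta(|y|/a_j)\le\theta(|x|/a_{j+1})=\chi_{j+1}(x)$ (when $\chi_j(y)=0$ the inequality is trivial).

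Finally, for~(iv) I would combine~(ii) and~(iii): since $|\cdot|^{-2}\chi_j\le a_j^{-2}\chi_j$ pointwise by~(ii) and $e^{t\Delta}$ is order preserving (its kernel $G$ is nonnegative), one gets $e^{t\Delta}(|\cdot|^{-2}\chi_j)\le a_j^{-2}e^{t\Delta}\chi_j$, and~(iii) bounds the right-hand side by $a_j^{-2}\chi_{j+1}+a_j^{-2}2^{N/2}e^{-a_{j+1}^2/(8t)}$. The only delicate point is the transition annulus $a_{j+1}<|x|<a_j$, where $\chi_{j+1}(x)$ is strictly between $0$ and $1$ and the trivial bound $e^{t\Delta}\chi_j\le 1$ is too weak; the ball decomposition is designed precisely to handle it, since the pointwise comparison $\chi_j(y)\le\chi_{j+1}(x)$ on $B$ converts the near-diagonal heat mass directly into the allowed main term $\chi_{j+1}(x)$, while all the far mass is absorbed into the exponentially small tail.
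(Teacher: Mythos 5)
Your proof is correct and follows essentially the same route as the paper's: the same near/far decomposition of the heat integral, the same Gaussian tail bound $2^{N/2}e^{-a_{j+1}^2/(8t)}$, and the same key pointwise comparison $\chi_j(y)\le\chi_{j+1}(x)$ for $|y-x|<a_{j+1}$ (which you derive directly from $\chi_j(y)>0\Rightarrow|y|>a_j$ rather than by the paper's three-case analysis on $|x|$, a cosmetic difference). Parts (i), (ii) and (iv) are handled exactly as in the paper.
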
 

\begin{proof} 
Property~\eqref{ePES1:1} is immediate, as well as Property~\eqref{ePES1:2b} (since $\chi _j (x)=0$ if $ |x|\le a_j$). Next, recall that, given $t, \nu >0$
\begin{equation} \label{fPES13} 
 (4\pi t)^{-\frac {N} {2}}\int  _{ \R^N  } e^{-\frac { |y|^2} {4 \nu  t}}dy= \nu^{\frac {N} {2}}  \pi ^{-\frac {N} {2}}\int  _{ \R^N  } e^{-  |y|^2 }dy =  \nu^{\frac {N} {2}} .
\end{equation} 
To prove~\eqref{ePES1:5}, note that 
\begin{equation} \label{fPES10} 
e ^{t \Delta } \chi _j (x)  =  (4\pi t)^{-\frac {N} {2}}\int  _{ \R^N  } e^{-\frac { |y|^2} {4t}} \chi _j (x-y)\, dy .
\end{equation} 
We claim that if $ |y| <a _{ j+1 } $, then 
\begin{equation} \label{fDFNt:1} 
\chi _j (x-y) \le \chi  _{ j+1 } (x)  \text{ for all } x\in \R^N .
\end{equation} 
Indeed, since $\theta $ is nondecreasing,
\begin{equation} \label{fDFNt:2} 
\chi _j ( x-y )=  \theta   \Bigl( \frac { |x- y|} {a_j} \Bigr) \le \theta   \Bigl( \frac { |x| + | y|} {a_j} \Bigr) \le \theta  \Bigl( \frac { |x| + a _{ j+1 }} {a_j} \Bigr) 
\end{equation} 
If $ |x|\le a _{ j+1 }$, then $ \frac { |x| + a _{ j+1 }} {a_j}\le 1 $, so that by~\eqref{fDFNt:2}, $\chi _j ( x-y )= 0$; and so~\eqref{fDFNt:1} holds. If $ |x|\ge a _{ j }$, then $\chi  _{ j+1 } ( x) =1$, so that~\eqref{fDFNt:1} holds. If $a _{ j+1 } \le  |x| \le a _{ j }$, then
\begin{equation*} 
\frac { |x| + a _{ j+1 }} {a_j} \le  \frac { 2 |x| } {a_j} = \frac { |x|} {a _{ j+1 }}
\end{equation*} 
so that by~\eqref{fDFNt:2} $\chi _j ( x-y ) \le \theta (\frac { |x|} {a _{ j+1 }}) = \chi  _{ j+1 } (x)$. This proves~\eqref{fDFNt:1}. 
We deduce from~\eqref{fDFNt:1} and~\eqref{fPES13}  with $\nu=1$ that
\begin{equation} \label{fPES11} 
 (4\pi t)^{-\frac {N} {2}}\int  _{ \{  |y|<a _{ j+1 } \}  } e^{-\frac { |y|^2} {4t}} \chi _j (x-y)\, dy \le \chi  _{ j+1 } (x).
\end{equation} 
Moreover, it follows from~\eqref{ePES1:1} and~\eqref{fPES13}  with $\nu=2$ that 
\begin{equation} \label{fPES12} 
\int  _{ \{  |y|>a _{ j+1 } \}  } e^{-\frac { |y|^2} {4t}} \chi _j (x-y) \, dy  \le e^{-\frac { a _{ j+1 }^2} {8t}} \int  _{\R^N  } e^{-\frac { |y|^2} {8t}}  dy = (8\pi t)^{\frac {N} {2}}e^{-\frac { a _{ j+1 }^2} {8t}}.
\end{equation} 
Property~\eqref{ePES1:5} follows from~\eqref{fPES10},  \eqref{fPES11} and~\eqref{fPES12},
then Property~\eqref{ePES1:4} follows from~\eqref{ePES1:2b} and~\eqref{ePES1:5}.
\end{proof} 

We next establish an estimate for the nonhomogeneous heat equation with a right-hand side given in terms of the functions $\chi _j$.

\begin{lem} \label{ePSS3} 
Let $m\ge 2$ be an integer, $\delta >0$, and let the sequence $(\chi _j) _{ j\ge 0 } $ be defined by~\eqref{fePSS1:1}. 
Set 
\begin{equation} \label{fPSDZ17b2} 
h( s, x)=  (   |x|^{-2} +1 )   \sum_{ j=1 }^{m } s^{\frac {j-1} {2}} \chi _j (x)   +  (  1 + s) s^{\frac {m-2 }{2}} 
\end{equation} 
for $s >0$ and $x\in \R^N $. It follows that $h \in  C([0, \infty ), L^\infty ( \R^N ) )$ and
\begin{equation} \label{fPSDZ20} 
\begin{split} 
  \int _0^t & e^{ (t-s) \Delta  } h(s) \, ds   \le   t^{\frac {1} {2}}  (1+  a _{ m }^{-2})  \Bigl( t^{\frac {m} {2}} + \sum_{ j=2 }^{m}   t^{\frac {j-1} {2}}  \chi   _j \Bigr) \\ & + m  2^{\frac {N} {2}}   (1+  a _{ m }^{-2})      (1+ t^{\frac {m+1} {2}}) e^{-\frac { a _{m+1} ^2} {8 t }}  +  2  \Bigl( \frac {1} {m} + \frac {t} {m+2} \Bigr) t^{\frac {m}{2}} 
\end{split} 
\end{equation} 
for all $t >0$.
\end{lem}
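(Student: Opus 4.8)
**The plan is to estimate the convolution integral directly, splitting the right-hand side $h$ into the singular/cutoff part and the polynomial remainder, and applying the semigroup bounds from Lemma~\ref{ePSS2} term by term.**

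First I would verify the claim $h \in C([0,\infty), L^\infty(\R^N))$. Each summand $(|x|^{-2}+1)s^{\frac{j-1}{2}}\chi_j(x)$ is bounded because $\chi_j$ vanishes on $\{|x|\le a_j\}$, so Property~\eqref{ePES1:2b} gives $|x|^{-2}\chi_j \le a_j^{-2}\chi_j$, hence $\|(|x|^{-2}+1)\chi_j\|_{L^\infty} \le 1+a_j^{-2}$; the factors $s^{\frac{j-1}{2}}$ (with $j\ge 1$) and the tail term $(1+s)s^{\frac{m-2}{2}}$ (with $m\ge 2$) are continuous in $s$ up to $s=0$. The heart of the proof is the inequality~\eqref{fPSDZ20}. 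The key idea is that the heat semigroup commutes with the integral and acts on each $\chi_j$ via Lemma~\ref{ePSS2}: from Properties~\eqref{ePES1:5} and~\eqref{ePES1:4} we get
\begin{equation*}
e^{(t-s)\Delta}\bigl[(|x|^{-2}+1)\chi_j\bigr] \le (a_j^{-2}+1)\chi_{j+1} + (a_j^{-2}+1)2^{\frac{N}{2}}e^{-\frac{a_{j+1}^2}{8(t-s)}},
\end{equation*}
using monotonicity and positivity of the semigroup. Since $a_j^{-2}\le a_m^{-2}$ for $j\le m$, I would bound every coefficient by the uniform factor $(1+a_m^{-2})$, which is why that factor appears pulled out in~\eqref{fPSDZ20}.

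Next I would carry out the $s$-integration. For the principal terms, $\int_0^t (t-s)^{?}\,s^{\frac{j-1}{2}}\,ds$ type integrals reduce, after the semigroup estimate converts the $\chi_j$-weight into $\chi_{j+1}$, to elementary Beta-function computations of the form $\int_0^t s^{\frac{j-1}{2}}\,ds = \frac{2}{j+1}t^{\frac{j+1}{2}}$; reindexing $j+1\mapsto j$ produces the sum $\sum_{j=2}^m t^{\frac{j-1}{2}}\chi_j$ together with the leading $t^{\frac{m}{2}}$ term from $j=m$, all multiplied by the overall $t^{1/2}$ and $(1+a_m^{-2})$. The Gaussian tails $2^{\frac{N}{2}}e^{-\frac{a_{j+1}^2}{8(t-s)}}$ must be controlled uniformly: since $a_{j+1}\ge a_{m+1}$ for $j\le m$, each exponential is dominated by $e^{-\frac{a_{m+1}^2}{8t}}$ after integrating over $s\in(0,t)$, accounting for the second line's factor $m\,2^{\frac{N}{2}}(1+a_m^{-2})(1+t^{\frac{m+1}{2}})e^{-\frac{a_{m+1}^2}{8t}}$, the $m$ counting the number of summands. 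Finally the polynomial tail $(1+s)s^{\frac{m-2}{2}}$ integrates (using $\|e^{(t-s)\Delta}\mathbf 1\|_{L^\infty}=1$) to $\frac{2}{m}t^{\frac{m}{2}} + \frac{2}{m+2}t^{\frac{m+2}{2}}$, giving the last term $2(\frac{1}{m}+\frac{t}{m+2})t^{\frac{m}{2}}$.

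\textbf{The main obstacle will be the bookkeeping in the reindexing and the uniform handling of the Gaussian tails.} The delicate point is that applying Lemma~\ref{ePSS2} raises each index $\chi_j \mapsto \chi_{j+1}$, so one must track carefully how the sum $\sum_{j=1}^m$ transforms into $\sum_{j=2}^m$ plus a boundary term, and verify that no term with index exceeding $m$ survives uncontrolled—this is exactly where the stability of the class $\Ens$ under iteration is encoded. A secondary subtlety is justifying the interchange of $e^{(t-s)\Delta}$ with the inequalities: since all functions involved are nonnegative and the semigroup is order-preserving and given by a positive kernel, the pointwise estimates from Lemma~\ref{ePSS2} integrate directly, but one should confirm the integrals converge (the Gaussian factors guarantee this near $s=t$, and the powers of $s$ guarantee it near $s=0$).
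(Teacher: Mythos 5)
Your proposal is correct and follows essentially the same route as the paper: apply Lemma~\ref{ePSS2}~\eqref{ePES1:5}--\eqref{ePES1:4} to each summand with the uniform constant $(1+a_m^{-2})$, integrate $s^{\frac{j-1}{2}}$ over $(0,t)$, reindex $j+1\mapsto j$ and absorb the overflow term via $\chi_{m+1}\le 1$, collect the $m$ Gaussian tails, and compute the polynomial tail exactly. The "delicate point" you flag is resolved exactly as you indicate, so there is no gap.
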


\begin{proof} 
The first property is immediate. 
Next, given $1\le j\le m$ and $0\le s\le t$, we deduce from Lemma~\ref{ePSS2}~\eqref{ePES1:5} and~\eqref{ePES1:4},  and the inequalities $a_m\le a_j$ and $a _{ m+1 }\le a_{j+1}$, that
\begin{equation*} 
 e^{ (t-s) \Delta  } [ ( |x|^{-2} + 1 ) \chi _j ]  \le (1+  a _{ m }^{-2}) \bigl(  \chi  _{ j+1 } + 2^{\frac {N} {2}} e^{-\frac { a  _{ m+1 } ^2} {8t }} \bigr) .
\end{equation*} 
It follows that
\begin{equation} \label{fPSDZ18} 
\begin{split} 
\int _0^t    s^{\frac {j-1} {2}}  e^{ (t-s) \Delta  }& [ ( |x|^{-2} + 1 ) \chi _j ]  \, ds \le  t^{\frac {j+1} {2}}  (1+  a _{ m }^{-2}) \bigl(  \chi  _{ j+1 } +  2^{\frac {N} {2}} e^{-\frac { a  _{ m+1 } ^2} {8t }} \bigr) \\
&\le    (1+  a _{ m }^{-2}) \bigl(  t^{\frac {j+1} {2}}\chi  _{ j+1 } +  (1+t^{\frac {m+1} {2}}) 2^{\frac {N} {2}} e^{-\frac { a  _{ m+1 } ^2} {8t }} \bigr) 
\end{split} 
\end{equation} 
 for $j\le m$.
Furthermore,
\begin{equation} \label{fPSDZ19} 
\int _0^t    e^{ (t-s) \Delta   } (1+ s) s^{\frac {m-2}{2}} ds = \int _0^t   ( 1+ s)   s^{\frac {m-2 }{2}} ds = 2 \Bigl( \frac {1} {m} + \frac {t} {m+2} \Bigr) t^{\frac {m}{2}}  .
\end{equation} 
Estimate~\eqref{fPSDZ20}  follows from~\eqref{fPSDZ17b2}, \eqref{fPSDZ18}, \eqref{fPSDZ19}, and the property $\chi _{m+1}\le 1$ 
\end{proof} 

We now estimate $\Md w$ for functions $w$ that are controlled in terms of the  $\chi _j$'s.

\begin{lem} \label{ePSS4} 
Let $\Omega $ be a domain of $\R^N $, possibly $\Omega =\R^N $, and 
let $\delta $, $\Psi $ and $U$ satisfy the hypotheses of Theorem~$\ref{ePSSZ1}$ (i.e., \eqref{ePSSZ1:1}, \eqref{ePSSZ1:4:b}, and~\eqref{ePSSZ1:1b1}--\eqref{ePSSZ1:4}).
Given $K>0$ and  $m \ge 2$, set 
\begin{equation} \label{fDTTabZ1} 
\Theta (t, x) =  \Bigl(  t^{\frac {m} {2}} + \sum_{ j=1 }^m t^{\frac {j-1} {2}} \chi _j   \Bigr) K
\end{equation} 
for $x\in \R^N $ and $t\ge 0$, where the sequence $(\chi _j) _{ j\ge 0 } $ is defined by~\eqref{fePSS1:1}, and let 
\begin{equation} \label{fTUQ} 
0 < T \le \frac {1} {4} .
\end{equation} 
 If $w\in L^\infty ((0,T) \times \Omega )$ and $ |w|\le \Theta $, then $\Md w$ defined by~\eqref{fPT3} belongs to $L^\infty ((0,T) \times \Omega )$. Moreover, 
\begin{equation}  \label{ePSS4:1} 
 | \Md w   | \le A (1 + K^{\alpha +1}) h 
\end{equation} 
and, if $z\in L^\infty ((0,T) \times \Omega )$ and $ |z|\le \Theta $, then
\begin{equation} \label{fpPSS4:21} 
  | \Md w - \Md z | \le  A ( 1 + K^{\alpha +1}  )    \Bigl|\frac {w  - z } {\Theta} \Bigr|  h
\end{equation} 
where $h$ is defined by~\eqref{fPSDZ17b2}, and the constant $A$ is independent of $T$, $m$, $K$, $w$ and $z$.
\end{lem}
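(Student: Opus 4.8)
The plan is to estimate separately the three kinds of contributions to $\Md w$ in~\eqref{fPT3} and to show that each is bounded by a multiple of $h$. Throughout, $A$ denotes a constant that may change from line to line but depends only on $N$, $\alpha$, $\CA1$, $\delta$ and $\|\Psi\|_{W^{2,\infty}}$, never on $T$, $m$, $K$, $w$ or $z$. Two elementary observations organise everything. First, since $\theta$ is nondecreasing with $\theta\equiv1$ on $[2,\infty)$, we have $\chi_1\equiv1$ on $\{|x|\ge\delta\}$, so that $h\ge1$ there; this lets me absorb into $Ah$ every term that is merely bounded and supported away from the origin. Second, because $0\le\chi_j\le1$ and the restriction $T\le\frac14$ in~\eqref{fTUQ} forces $t^{1/2}\le\frac12$, the sum occurring in $\Theta$ obeys the $m$-independent bound
\begin{equation*}
S(t,x):=\sum_{j=1}^{m} t^{\frac{j-1}{2}}\chi_j\le\sum_{k=0}^{\infty} t^{\frac{k}{2}}=\frac{1}{1-t^{1/2}}\le 2,\qquad 0<t\le\tfrac14 .
\end{equation*}

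I would first record the pointwise bound $|F(a)-F(b)|\le(\alpha+1)(|a|^\alpha+|b|^\alpha)|a-b|$ for $F(v)=|v|^\alpha v$, and split
\begin{equation*}
\Md w=\bigl(2\nabla U\cdot\nabla\Psi+U\Delta\Psi\bigr)+\bigl(F(\Psi U)-\Psi F(U)\bigr)+\bigl(F(\Psi U+w)-F(\Psi U)\bigr).
\end{equation*}
The first group vanishes on $\{|x|<\delta\}$, and on its support $\{|x|\ge\delta\}$ the bounds $|\nabla U|\le(\CA1)^{1/\alpha}$ from~\eqref{ePSSZ1:4} and $|U|^\alpha\le\CA1\delta^{-2}$ from~\eqref{ePSSZ1:3} make it bounded by a constant; since $h\ge1$ there, it is $\le Ah$. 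The second group equals $(\Psi^{\alpha+1}-\Psi)|U|^\alpha U$ and is likewise supported in $\{|x|\ge\delta\}$, where $|\Psi^{\alpha+1}-\Psi|\le1$ and $|U|^{\alpha+1}\le(\CA1\delta^{-2})^{\frac{\alpha+1}{\alpha}}$, so it too is $\le Ah$. These two groups account for the ``$1$'' in~\eqref{ePSS4:1}.

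For the last group the pointwise Lipschitz bound, together with $|\Psi U+w|,\,|\Psi U|\le|U|+\Theta$ and $|w|\le\Theta$, gives $|F(\Psi U+w)-F(\Psi U)|\le A(|U|^\alpha+\Theta^\alpha)\Theta$. The whole estimate~\eqref{ePSS4:1} then reduces to the two key bounds
\begin{equation*}
|U|^\alpha\,\Theta\le AK\,h,\qquad \Theta^{\alpha+1}\le AK^{\alpha+1}h,
\end{equation*}
since $(|U|^\alpha+\Theta^\alpha)\Theta=|U|^\alpha\Theta+\Theta^{\alpha+1}\le A(K+K^{\alpha+1})h\le A(1+K^{\alpha+1})h$; membership $\Md w\in L^\infty((0,T)\times\Omega)$ is then inherited from $h\in C([0,\infty),L^\infty(\R^N))$. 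To prove the first bound I split $\Theta=K(t^{m/2}+S)$: on the term $t^{m/2}$ I use $|U|^\alpha\le\CA1 t^{-1}$ to obtain $\CA1 t^{\frac{m-2}{2}}\le\CA1 h$, while on $S$ I use $|U|^\alpha\le\CA1|x|^{-2}$ to obtain $\CA1|x|^{-2}S\le\CA1 h$, the latter being precisely the first summand of $h$ in~\eqref{fPSDZ17b2}. For the second bound I write $\Theta^{\alpha+1}\le AK^{\alpha+1}(t^{\frac{m(\alpha+1)}{2}}+S^{\alpha+1})$ using $(a+b)^{\alpha+1}\le2^{\alpha}(a^{\alpha+1}+b^{\alpha+1})$; here $t^{\frac{m(\alpha+1)}{2}}\le t^{m/2}\le t^{\frac{m-2}{2}}\le h$, and the remaining term is controlled by $S^{\alpha+1}=S^\alpha S\le2^\alpha S\le2^\alpha h$, using $S\le2$ and $h\ge S$. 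This last step, getting a constant independent of $m$ for $\Theta^{\alpha+1}$, is the one delicate point: a crude term-by-term expansion of the $(\alpha{+}1)$-st power of an $(m{+}1)$-term sum yields $m$-dependent constants, and it is exactly the hypothesis $T\le\frac14$, through the uniform geometric estimate $S\le2$, that salvages a uniform constant.

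Finally, for~\eqref{fpPSS4:21} I observe that the $w$-independent terms of~\eqref{fPT3} cancel in $\Md w-\Md z$, leaving $F(\Psi U+w)-F(\Psi U+z)$. The pointwise Lipschitz bound gives $|\Md w-\Md z|\le A(|U|^\alpha+\Theta^\alpha)|w-z|$, and writing $|w-z|=\bigl|\frac{w-z}{\Theta}\bigr|\,\Theta$, which is legitimate since $\Theta\ge Kt^{m/2}>0$ for $t>0$, I reuse the two key bounds on $(|U|^\alpha+\Theta^\alpha)\Theta$ to arrive at $A(1+K^{\alpha+1})\bigl|\frac{w-z}{\Theta}\bigr|h$, as claimed.
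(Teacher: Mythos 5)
Your proposal is correct and follows essentially the same route as the paper's proof: the same three-way decomposition of $\Md w$ (linear terms in $U$ supported away from the origin, the commutator $(\Psi^{\alpha+1}-\Psi)|U|^\alpha U$, and the Lipschitz estimate for $v\mapsto|v|^\alpha v$ applied to the remaining difference), the same two key bounds $|U|^\alpha\Theta\le AKh$ and $\Theta^{\alpha+1}\le AK^{\alpha+1}h$, and the same use of $T\le\frac14$ via the geometric series to get constants independent of $m$. The only cosmetic difference is that you bound $\Theta^{\alpha+1}$ through $(t^{m/2}+S)^{\alpha+1}$ with $S\le2$, whereas the paper uses $\|\Theta\|_{L^\infty}^\alpha\Theta\le(2K)^\alpha\cdot Kh$; these are equivalent.
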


\begin{rem} \label{ePSS4b1} 
As indicated just before the statement of  Theorem~$\ref{ePSSZ1}$, the function $\Theta$, here given by~\eqref{fDTTabZ1}, is used to define the function space
for the fixed point argument.  See also formula \eqref{fPSDZ14} below.  
As such, $\Theta$ has two key properties.
The first of these is that $\Theta (t,x)$ can balance any negative power of $t$ in the region $ |x|\le \delta 2^{-m}$ provided $m$ is chosen sufficiently large. 
Indeed, $\Theta (t,x) = K t^{\frac {m} {2}}$ for $t\ge 0$ and $ |x|\le \delta 2^{-m}$.
The second key property is that  $\Theta (0,x) \ge K$ if $ |x|\ge \delta $, so that if $\DIbd \in L^\infty  (\Omega ) $ vanishes in a neighborhood of $0$, then $\Theta (0,x) \ge w_0$ if $\delta $ is chosen sufficiently small and $K$ is chosen sufficiently large. 
Indeed, $\Theta (t, x)  = K\sum_{ j=0 }^m t^{\frac {j} {2}}\ge K$  for $t\ge 0$ and $ |x| \ge \delta $.
\end{rem} 

\begin{proof} [Proof of Lemma~$\ref{ePSS4}$]
It follows from~\eqref{fDTTabZ1}, \eqref{fePSS1:1}  and~\eqref{fTUQ} that if $t\le T$, then
\begin{equation} \label{fET1} 
 \| \Theta (t) \| _{ L^\infty (\R^N ) } \le K \sum_{ j=0 }^{m} t^{\frac {j} {2}} = K \frac {1- t^{\frac {m+1} {2}}} { 1 - t^{\frac {1} {2}}} \le 2K .
\end{equation} 
Moreover, we deduce from~\eqref{fPSDZ17b2} that
\begin{equation} \label{fET2} 
  \Theta (t)  \le Kh.
\end{equation} 
We note that by~\eqref{ePSSZ1:3}, \eqref{ePSSZ1:4} and the fact that $\nabla \Psi $ vanishes for $ |x| <\delta $ (by~\eqref{ePSSZ1:4:b})
\begin{equation}  \label{fpPSS4:1} 
 |2\nabla U \cdot \nabla \Psi + U\Delta \Psi| \le \CA1^{\frac {1} {\alpha }} (2 + \delta ^{-\frac {2} {\alpha }}) (  |\nabla \Psi |+  |\Delta \Psi |).
\end{equation} 
Next
\begin{equation}  \label{fpPSS4:2} 
\begin{split} 
\bigl| | \Psi U & +w|^\alpha (\Psi U  +w)   - \Psi  |U|^\alpha U \bigr|  \\ & \le   \bigl| | \Psi U+w|^\alpha (\Psi U+w) -  | \Psi U|^\alpha \Psi U \bigr|  +  (1- \Psi ^\alpha )  \Psi |U|^{\alpha +1} .
\end{split}  
\end{equation} 
We estimate the first term on the right-hand side of~\eqref{fpPSS4:2}  by using the elementary inequalities $  |\,  |z_1|^\alpha z_1 -  |z_2|^\alpha z_2| \le (\alpha +1) ( |z_1|^\alpha +  |z_2|^\alpha ) |z_1 -z_2|$ and $ |z_1 + z_2|^\alpha \le 2^\alpha (  |z_1|^\alpha +  |z_2|^\alpha )$; and we estimate the second term by using the fact that, by~\eqref{ePSSZ1:3}, $ |U|^{\alpha +1} $ is bounded on the support of $1- \Psi ^\alpha$, uniformly in $t$. 
We obtain
\begin{gather}  
  \bigl| | \Psi U+w|^\alpha (\Psi U+w) -  | \Psi U|^\alpha \Psi U \bigr|  \le 2^{\alpha +1} (\alpha +1) ( |U|^\alpha +  |w|^\alpha )  |w|  \label{fpPSS4:3} 
 \\   (1- \Psi ^\alpha )  \Psi |U|^{\alpha +1}  \le \CA1^{\frac {\alpha +1} {\alpha }} \delta ^{-\frac {2 (\alpha +1)} {\alpha }}  (1- \Psi ^\alpha )  \Psi . \label{fpPSS4:4} 
\end{gather} 
Therefore, by~\eqref{fPT3}, \eqref{fpPSS4:1},   \eqref{fpPSS4:2},  \eqref{fpPSS4:3} and~\eqref{fpPSS4:4}, there exists a constant $\CA2 $ independent of $t, x$ and $w$ such that
\begin{equation} \label{fpPSS4:5} 
 | \Md w| \le \CA2  ( (1- \Psi ^\alpha ) \Psi  +  |\nabla \Psi | +  |\Delta \Psi |) + \CA2  (  |U|^\alpha +  |w|^\alpha ) |w|  .
\end{equation} 
Note that $(1- \Psi ^\alpha ) \Psi  +  |\nabla \Psi | +  |\Delta \Psi | \le 3  \| \Psi \| _{ W^{2, \infty } }$ and vanishes on $\{  |x| \le \delta \}$. Therefore, we deduce from~\eqref{fePSS1:1} that
\begin{equation} \label{fpPSS4:6} 
(1- \Psi ^\alpha ) \Psi  +  |\nabla \Psi | +  |\Delta \Psi | \le  3  \| \Psi \| _{ W^{2, \infty } } \chi _0  \le  3  \| \Psi \| _{ W^{2, \infty } } \chi _1  \le  3  \| \Psi \| _{ W^{2, \infty } } h .
\end{equation} 
Moreover,  \eqref{fET1} and  \eqref{fET2} imply
\begin{equation} \label{fpPSS4:7} 
 |w|^{\alpha +1} \le  \| \Theta \| _{ L^\infty  }^\alpha \Theta \le (2K)^\alpha \Theta \le 2^\alpha K^{\alpha +1} h .
\end{equation} 
In addition, \eqref{ePSSZ1:3} implies
\begin{equation*} 
 \frac { |U|^{\alpha } } {K\CA1} \Theta  =  \frac { |U|^{\alpha } } {\CA1}  t^{\frac {m} {2}} +  \frac { |U|^{\alpha } } {\CA1}  \sum_{ j=1 }^m t^{\frac {j-1} {2}} \chi _j 
 \le  t^{-1} t^{\frac {m} {2}} +  |x|^{-2}  \sum_{ j=1 }^m t^{\frac {j-1} {2}} \chi _j  \le h
\end{equation*} 
so that
\begin{equation} \label{fpPSS4:8} 
 |U|^{\alpha }  |w| \le  |U|^{\alpha } \Theta \le  K \CA1 h.
\end{equation} 
Estimate~\eqref{ePSS4:1} follows from~\eqref{fpPSS4:5}, \eqref{fpPSS4:6},  \eqref{fpPSS4:7} and~\eqref{fpPSS4:8}. 

Next, given $w_1, w_2 \in L^\infty  ((0, T) \times \R^N )$ with $ |w_1|,  |w_2|\le \Theta$, we deduce from~\eqref{fPT3} that
\begin{equation*} 
  \Md w_1 - \Md w_2 =    | \Psi U+w_1 |^\alpha (\Psi U+w_1) -  | \Psi U+w_2 |^\alpha (\Psi U+w_2) 
\end{equation*} 
so that (for some constant $\CA3$)
\begin{equation} \label{fpPSS4:20} 
\begin{split} 
  | \Md w_1 - \Md w_2|  & \le  \CA3  (  |U|^\alpha +  |w_1|^\alpha +  |w_2|^\alpha ) |w_1 - w_2 | \\
  & =  \CA3  (  |U|^\alpha +  |w_1|^\alpha +  |w_2|^\alpha ) \Theta  \textstyle{ | \frac {w_1 - w_2} {\Theta} | } . 
\end{split} 
\end{equation} 
Since $ | w_j|^\alpha \Theta \le  \| \Theta \| _{ L^\infty  }^{\alpha } \Theta \le (2K)^{\alpha +1} h$ by~\eqref{fpPSS4:7}  and $|U|^{\alpha } \Theta \le  K \CA1 h$ by~\eqref{fpPSS4:8}, estimate~\eqref{fpPSS4:21} follows from~\eqref{fpPSS4:20} by possibly choosing $A$  larger still independent of $K$ (and using the inequality $K+K^{\alpha +1} \le 2(1+K^{\alpha +1}$).
\end{proof} 

We now can prove Theorem~\ref{ePSSZ1} by using a fixed point argument.

\begin{proof} [Proof of Theorem~$\ref{ePSSZ1}$]
We set 
\begin{equation} \label{fDK1} 
K= 2  \| \DIbd \| _{ L^\infty  }
\end{equation} 
and
\begin{equation} \label{fCTS} 
B= (1 + K^{\alpha +1})  A,
\end{equation} 
 where $A$ is given by Lemma~\ref{ePSS4}. 
 We now choose an integer $m\ge 2$ sufficiently large so that
\begin{equation} \label{fDFM} 
 \frac {4} {m}   \le \frac {K} {4B}.
\end{equation} 
Next, we fix $T$ satisfying~\eqref{fTUQ} sufficiently small so that
\begin{gather} 
2 ^{\frac {N} {2}} e^{-\frac {a_1^{2}}{8 t}} \le \frac {1} {2} t^{\frac {m}{2}} \text{ for all } 0<t\le T \label{fDFM:1}  \\
   T^{\frac {1} {2}}  (1+  a _{ m }^{-2})  \le \frac {K} {4B}  \label{fDFM:2} \\
  m   2^{\frac {N} {2}}   (1+  a _{ m }^{-2})      (1+ t^{\frac {m+1} {2}}) e^{-\frac { a _{m +1} ^2} {8 t }} \le  \frac {K} {4B} t^{\frac {m}{2}} \text{ for all } 0<t\le T .\label{fDFM:3} 
\end{gather} 
We let $\Theta $ be defined by~\eqref{fDTTabZ1} with $K$ given by~\eqref{fDK1} and we define the set $\Ens$ by
\begin{equation} \label{fPSDZ14} 
\Ens = \{ w\in L^\infty ((0,T) \times \Omega );\,    |w| \le \Theta  \} .
\end{equation} 
Given $w,z\in \Ens$ we set
\begin{equation*} 
\dist (w, z) =   \Bigl\| \frac {w-z} {\Theta} \Bigr\| _{ L^\infty ((0,T)\times \Omega  ) }
\end{equation*} 
so that $(\Ens, \dist )$ is a complete metric space.

By~\eqref{ePSSZ1:5}, we have  $ | \DIbd  |  \le  \| \DIbd  \| _{ L^\infty  } \chi _0 $, and we deduce from  Lemma~\ref{ePSS2}~\eqref{ePES1:5}, \eqref{fDK1}, and~\eqref{fDFM:1}   that
\begin{equation}  \label{fPSDZ15} 
 e ^{t \Delta _\Omega } | \DIbd  |   \le  \frac {K} {2} ( \chi _1 + 2^{\frac {N} {2}} e^{-\frac {a_1^{2}}{8 t}} )   \le \frac {K} {2}  \chi _1 + \frac {K} {4}   t^{\frac {m}{2}} 
  \end{equation} 
for all $0< t\le T$.  Moreover, it follows from~\eqref{ePSS4:1} and~\eqref{fCTS}  that 
\begin{equation} \label{fAVR1} 
 | \Md w   | \le B h \quad w\in \Ens
\end{equation} 
and from~\eqref{fpPSS4:21}  and~\eqref{fCTS} that  
\begin{equation} \label{fAVR2} 
  | \Md w - \Md z | \le  B   \dist (w, z) h \quad w, z\in \Ens
\end{equation} 
where $h$ is defined by~\eqref{fPSDZ17b2}. 
In particular, we see that $\Md w\in L^\infty ((0,T) \times \Omega )$ for all $w\in \Ens$.
We define $\Phi : \Ens \mapsto L^\infty ((0,T) \times \Omega )$ by
\begin{equation} \label{fNVDP1} 
 \Phi (w) (t) = e ^{t \Delta _\Omega } \DIbd  + \Map  ( \Md w) (t)
\end{equation} 
for $w\in \Ens$, where
\begin{equation*} 
\Map  (f) (t) = \int _0^t  e^{ (t-s) \Delta  _\Omega } f(s) \, ds 
\end{equation*} 
for $f\in L^\infty ((0,T)\times \Omega )$. 
It follows from~\eqref{fPSDZ20} and~\eqref{fTUQ} that
\begin{equation*} 
\begin{split} 
 | \Map (h) (t) | &   \le   T^{\frac {1} {2}}  (1+  a _{ m }^{-2})  \Bigl(   t^{\frac {m} {2}} +
  \sum_{ j=2 }^{m}   t^{\frac {j-1} {2}}  \chi   _j \Bigr) \\ & + m  2^{\frac {N} {2}}   (1+  a _{ m }^{-2})      (1+ t^{\frac {m+1} {2}}) e^{-\frac { a _{m+1} ^2} {8 t }}  +   \frac {4} {m}  t^{\frac {m}{2}}  .
\end{split} 
\end{equation*} 
Applying~\eqref{fDFM:2}, \eqref{fDFM:3}, and \eqref{fDFM}  we deduce that
\begin{equation} \label{fEMap1} 
 | \Map (h) (t) |   \le \frac {3K} {4B} t^{\frac {m} {2}}  +  \frac {K} {4B}
 \sum_{ j=2 }^{m}   t^{\frac {j-1} {2}}  \chi   _j .
\end{equation} 
It follows from~\eqref{fNVDP1}, \eqref{fPSDZ15}, \eqref{fAVR1}, and~\eqref{fEMap1} that
\begin{equation} \label{fFNZ4b1} 
 | \Phi (w) (t) |  \le  K  t^{\frac {m}{2}}  + \frac {K} {2}  \chi _1   + 
 \frac {K} {4}  \sum_{ j=2 }^{m}   t^{\frac {j-1} {2}}  \chi   _j  \le  \Theta 
\end{equation} 
where the last inequality follows from~\eqref{fDTTabZ1}.

Similarly, applying~\eqref{fNVDP1}, \eqref{fAVR2}, and~\eqref{fEMap1} we obtain 
\begin{equation} \label{fFNZ3b1} 
  | \Phi (w) - \Phi (z) |   \le \frac {3K} {4}   \Bigl( t^{\frac {m} {2}} + \sum_{ j=2 }^{m}   t^{\frac {j-1} {2}}  \chi   _j \Bigr) \dist (w,z)   \le  \frac {3} {4} \Theta \dist (w, z) .
\end{equation} 
Estimate~\eqref{fFNZ4b1}  implies that $\Phi : \Ens \to \Ens$,  then~\eqref{fFNZ3b1} implies that $\Phi $ is a strict contraction. 
Thus $\Phi $ has a unique fixed point $w \in  \Ens$, which proves property~\eqref{ePSSZ1:11}. Since $w\in \Ens$, we have $\Md w\in L^\infty ((0,T) \times \Omega )$ by~\eqref{fAVR1}, which proves property~\eqref{ePSSZ1:10}. Property~\eqref{ePSSZ1:12} follows, see Lemma~\ref{eHO2}.
\end{proof} 

\section{Perturbations of self-similar solutions} \label{sPSSS} 

Consider the equation~\eqref{Inlh} 
set on $\R^N $. Theorem~\ref{ePSSZ1} yields the following result.

\begin{thm} \label{ePSS1} 
Let $N\ge 1$ and $\alpha >0$. 
Suppose $U$ is a radially symmetric, regular self-similar solution of~\eqref{Inlh} on $\R^N $ with initial value $\MUU  |x|^{-\frac {2} {\alpha }}$, for some $\MUU \in \R$, in the sense~\eqref{fEP2}. 
Let $\DI \in L^\infty _\Loc (\R^N \setminus \{0\}) \cap L^\infty  ( \{  |x|>1 \} )  $ and suppose that there exists $\delta >0$ such that
\begin{equation} \label{fePSS1:1zz} 
\DI (x) = \MUU  |x|^{-\frac {2} {\alpha }}, \quad  |x|<\delta .
\end{equation} 
It follows that there exist $T>0$ and a solution $u\in C ((0, T], \Cbu )$ of~\eqref{Inlh} on $\R^N $ such that $u (t)  \to  \DI$ as $t\to 0$ in $L^p _\Loc (\R^N \setminus \{0\}) $ for all $1\le p<\infty $. 
Moreover, the following properties hold.
\begin{enumerate}[{\rm (i)}] 

\item \label{ePSS1:1} 
$u- U \in L^\infty ((0,T) \times \R^N )$ and there exists a constant $C$ such that
\begin{equation} \label{fePSS1:1:1} 
 \| u(t) - U(t) - e^{t \Delta } (\DI - \mu  |\cdot |^{-\frac {2} {\alpha }}) \|  _{ L^\infty }\le C t
\end{equation} 
for all $0<t<T$.

\item \label{ePSS1:2} 
If $\alpha >\frac {2} {N}$, then $u$ is a solution of the integral equation
\begin{equation} \label{NLHI} 
u(t) = e^{t\Delta } \DI + \int _0^t e^{(t-s) \Delta }  |u(s)|^\alpha u(s) \, ds
\end{equation} 
where the integral is convergent  in $L^r (\R^N ) + L^\infty  (\R^N ) $ for all $1\le  r < \frac {N\alpha } {2}$ and  each term is in $C((0,T ), \Cbu )$. Moreover, the map $t \mapsto u (t) - e^{t \Delta } \DI $ is in $ C([0,T ), L^r (\R^N ) + L^\infty  (\R^N ) )$ for all $1\le  r < \frac {N\alpha } {2}$.

\item \label{ePSS1:3} 
If 
\begin{equation} \label{ePSS1:1b} 
\esup  _{  | x | >R }  | \DI (x) | \goto  _{ R \to \infty  } 0
\end{equation} 
 then $u \in C( ( 0, T], \Cz )$. 
\end{enumerate} 
\end{thm}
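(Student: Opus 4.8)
The plan is to apply Theorem~\ref{ePSSZ1} with the given self-similar solution $U$ and the cut-off function $\Psi$, decomposing the sought solution as $u = \Psi U + w$. First I would verify that the hypotheses of Theorem~\ref{ePSSZ1} are met. Since $U$ is a radially symmetric regular self-similar solution, the profile bound $|f(r)| + (1+r)|f'(r)| \le C(1+r^2)^{-1/\alpha}$ from~\cite[Proposition~3.1]{HarauxW} gives exactly the gradient and pointwise bounds~\eqref{ePSSZ1:3}--\eqref{ePSSZ1:4}, as recorded in~\eqref{fSSS2}. I would choose any $\Psi$ satisfying~\eqref{ePSSZ1:4:b} and set the perturbation initial value to $\DIbd = \DI - \MUU |\cdot|^{-2/\alpha}$, which vanishes on $\{|x|<\delta\}$ by~\eqref{fePSS1:1zz} and lies in $L^\infty(\R^N)$ by the hypotheses $\DI \in L^\infty_\Loc(\R^N \setminus\{0\}) \cap L^\infty(\{|x|>1\})$; since $\Psi \equiv 1$ near $0$, one checks that $\Psi U + w$ with $w(0) = \DIbd$ produces the correct initial singularity. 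Theorem~\ref{ePSSZ1} then furnishes $T>0$ and $w \in L^\infty((0,T)\times\R^N)$ solving the integral equation~\eqref{fPT4}, with $\|w(t) - e^{t\Delta}\DIbd\|_{L^\infty} \le Ct$.

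Next I would upgrade the regularity of $u = \Psi U + w$ and establish that it actually solves~\eqref{Inlh} in the classical sense, showing $u \in C((0,T], \Cbu)$. The function $\Psi U$ is smooth and bounded for $t>0$ away from the origin's singularity, and the results of Appendix~\ref{sLHERN} (on when solutions of the integral equation~\eqref{fPT4} are genuine solutions of the differential equation) convert $w$ into a classical solution, yielding continuity into $\Cbu$. To prove property~\eqref{ePSS1:1}, note $u - U = (\Psi - 1)U + w$, where $(\Psi-1)U$ is supported in $\{|x|\ge\delta\}$ and there bounded uniformly by~\eqref{ePSSZ1:3}; combining this with the $L^\infty$ bound on $w$ and the estimate $\|w(t) - e^{t\Delta}\DIbd\|_{L^\infty}\le Ct$ from Theorem~\ref{ePSSZ1}~\eqref{ePSSZ1:12}, and identifying $\DIbd = \DI - \MUU|\cdot|^{-2/\alpha}$, gives~\eqref{fePSS1:1:1} after absorbing the smooth correction terms into the $Ct$ remainder. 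The initial-value convergence $u(t)\to\DI$ in $L^p_\Loc(\R^N\setminus\{0\})$ follows from the self-similar convergence~\eqref{fEP2} for $U$ together with $w \to \DIbd$ in $L^\infty$.

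For property~\eqref{ePSS1:2}, assuming $\alpha > 2/N$, I would combine the integral equation~\eqref{fNZ2:b1} satisfied by $U$ (from Proposition~\ref{eRESS1}) with the integral equation~\eqref{fPT4} satisfied by $w$. Adding the two and using the semigroup's action on $\Psi$ via Duhamel's formula, the cross terms $2\nabla U\cdot\nabla\Psi + U\Delta\Psi$ in $\Md w$ are precisely what is needed to reconcile $\Psi U$ with $U$ and convert the nonlinearity $\Psi|U|^\alpha U$ into $|u|^\alpha u$; the convergence of the integral in $L^r(\R^N)+L^\infty(\R^N)$ for $1\le r<\frac{N\alpha}{2}$ follows from the corresponding convergence for $U$ in Proposition~\ref{eRESS1} plus the boundedness of $\Md w$. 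Property~\eqref{ePSS1:3}, the decay into $\Cz$, would use the extra hypothesis~\eqref{ePSS1:1b} to show the spatial tails of $e^{t\Delta}\DIbd$ and of the Duhamel integral decay at infinity, so that $u(t) \in \Cz$ for each $t$.

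I expect the main obstacle to be the bookkeeping in property~\eqref{ePSS1:2}: carefully matching the two integral equations for $U$ and $w$ and verifying that the algebra of the cutoff terms in $\Md w$ exactly reassembles $|u|^\alpha u$ while tracking the precise function spaces in which each piece of the Duhamel integral converges. The delicate point is that $U$'s integral equation is posed in $L^r(\R^N)$ for the homogeneous data $\MUU|\cdot|^{-2/\alpha}$, whereas $w$'s is posed in $L^\infty$; reconciling these so that the sum lands in $L^r + L^\infty$ for the full subcritical range, and confirming each term individually lies in $C((0,T),\Cbu)$, requires the regularity machinery of Appendix~\ref{sLHERN} and the sharp convergence estimate from~\cite{CDNW1} cited in Proposition~\ref{eRESS1}.
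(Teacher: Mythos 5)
Your overall strategy (apply Theorem~\ref{ePSSZ1}, then upgrade regularity via the appendices) matches the paper's, but two points go wrong. First, on $\R^N$ the paper takes $\Psi\equiv 1$, which is admissible under~\eqref{ePSSZ1:4:b}; then $u=U+w$ exactly and $\Md w=|u|^\alpha u-|U|^\alpha U$ with no cut-off terms. Your version with a genuine cut-off does not work as described: with $\DIbd=\DI-\MUU|\cdot|^{-2/\alpha}$ the initial value of $\Psi U+w$ is $\MUU\Psi|\cdot|^{-2/\alpha}+\DIbd$, which differs from $\DI$ wherever $\Psi\neq 1$; and, more seriously, the term $(\Psi-1)U(t)$ in $u-U$ converges as $t\downarrow 0$ to the nonzero function $(\Psi-1)\MUU|\cdot|^{-2/\alpha}$, so it cannot be ``absorbed into the $Ct$ remainder'' of~\eqref{fePSS1:1:1}. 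The cut-off is genuinely needed only on a bounded domain (Theorem~\ref{ePSD1}), where the analogue of property~\eqref{ePSS1:1} is correspondingly weakened to mere boundedness of $u-U$.

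Second, property~\eqref{ePSS1:3} is a real gap. You propose to show that the spatial tails of the Duhamel integral decay, but $|\Md w|\le C(|U|^\alpha+|w|^\alpha)|w|$, and while $|U|^\alpha\le C|x|^{-2}$ decays, the contribution $|w|^{\alpha+1}$ decays at infinity only if $w$ itself does; the fixed point gives only $|w|\le\Theta$, and $\Theta$ does not decay at infinity (cf.\ Remark~\ref{eRemNU4}). So the argument as sketched is circular: the integrand is in $\Cz$ only if the conclusion already holds. The paper closes this loop with a comparison argument: writing $(1-\xi)\Md w=(1-\xi)\rho w$ with $\rho$ bounded, it builds a supersolution $z$ of $z_t-\Delta z=D\xi+Kz$ with $z(0)=z_0\in\Cz$, $|\DIbd|\le z_0$, and proves $|w|\le z$ by testing the equation for $W=w-z$ against $\varphi W^+$ with an exponentially decaying weight $\varphi$ and applying a Gronwall estimate. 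That supersolution/weighted-energy step is the substance of~\eqref{ePSS1:3} and is absent from your sketch. Your outline of property~\eqref{ePSS1:2} is essentially the paper's argument and simplifies further once $\Psi\equiv1$, since there are then no cross terms to reconcile.
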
 

\begin{rem} \label{eRemNU4} 
In part~\eqref{ePSS1:2} of Theorem~\ref{ePSS1}, if in addition $\DI\in L^r (\R^N  ) $ for some $r \ge 1$ such that $\frac {N\alpha } {2(\alpha +1) } < r < \frac {N\alpha } {2}$, we expect that the solution $u $ will be in $C([0, T), L^r (\R^N ) )$, instead of $C([0, T), L^r (\R^N )+ L^\infty  (\R^N )  )$ as stated.
The obstacle to proving this is that the function $\Theta $ in the proof of Theorem~\ref{ePSSZ1} (see formula~\eqref{fDTTabZ1}) does not decay to $0$ at infinity.
\end{rem} 

\begin{rem} \label{eRemNU1} 
Suppose $U^1 \not = U^2$ are two radially symmetric, regular self-similar solutions of~\eqref{Inlh} on $\R^N $ with the same initial value $\MUU  |x|^{-\frac {2} {\alpha }}$, where $\MUU \in \R$, in the sense~\eqref{fEP2}. 
Suppose $u^1, u^2$ are solutions of~\eqref{Inlh}  on $(0,T)$, which are perturbations of the solutions $U^1, U^2$, respectively, in the sense of Theorem~\ref{ePSS1}, with the same initial value $\DI$. 
It follows that $u^1 \not = u^2$. More precisely,
\begin{equation} \label{eRemNU1:1} 
\liminf  _{ t\downarrow 0 } t^{\frac {1} {\alpha }}  | u^1 (t, 0)- u^2 (t, 0) | >0.
\end{equation} 
This is clear, since $U^1, U^2$ correspond to two profiles $f^1, f^2$ with $f^1 (0) \not = f^2 (0)$. (Otherwise, $f^1= f^2$, cf. equation~\eqref{fpr3}-\eqref{fpr3:1}.)
Therefore, $  t^{ \frac {1} {\alpha }}  |U^1(t,0) - U^2 (t, 0) | = | f^1 (0) - f^2 (0) | \not = 0$. Since $  | u^1(t, 0) - U^1 (t, 0) |$ is bounded by~\eqref{fePSS1:1:1}, the lower estimate~\eqref{eRemNU1:1} follows. 
\end{rem} 

\begin{proof} [Proof of Theorem~$\ref{ePSS1}$]
 Let 
\begin{equation*} 
\DIbd (x)= \DI (x) - \MUU  |x|^{-\frac {2} {\alpha }} =
\begin{cases} 
0 &  |x|< \delta \\ \DI (x) - \MUU  |x|^{-\frac {2} {\alpha }} &  |x| >\delta .
\end{cases} 
\end{equation*} 
Applying Theorem~\ref{ePSSZ1} with $\Omega =\R^N $ and $\Psi \equiv 1$, it follows that there exist $T>0$ and a function $w\in L^{\infty}((0,T)\times \R^{N})$ such that $\Md w \in L^{\infty}((0,T)\times \R^{N})$, which is a solution of~\eqref{fPT4}. Note that, since $\Psi \equiv 1$ on $\R^N $, we have 
\begin{equation} \label{fAD1} 
\Md w=  | u|^\alpha u -  |U|^\alpha U, 
\end{equation} 
where
\begin{equation} \label{fAD2} 
u = U + w .
\end{equation} 
We claim that $u$ is a classical solution of~\eqref{Inlh}  on $(0,T)\times \R^N $.
To see this, we first observe, as shown in Appendix~\ref{sLHERN}, that equation~\eqref{fPT4} implies that $w\in C((0, T] , \Cbu )$, and that, given any $0<\tau <T$,
\begin{equation}  \label{fPT4:b1} 
w (t+ \tau ) = e^{t \Delta  } w(\tau ) + \int _0^t e^{(t -s) \Delta  } \Md w (\tau +s) \, ds
\end{equation}  
for $0\le t\le T - \tau $. Since $U$ is a classical solution of~\eqref{Inlh}  on $(0,\infty )\times \R^N $, we have
\begin{equation}  \label{fPT4:b2} 
U (t+ \tau ) = e^{t \Delta  } U (\tau ) + \int _0^t e^{(t -s) \Delta  }  |U (\tau +s)|^\alpha U (\tau +s) \, ds
\end{equation}  
for all $t\ge 0$. Since $ |w|^\alpha w$ and $ |U|^\alpha U$ both belong to $C([\tau ,T], \Cbu )$, we deduce from~\eqref{fAD1}, \eqref{fAD2}, \eqref{fPT4:b1} and~\eqref{fPT4:b2} that $u\in C((0, T], \Cbu )$ satisfies
\begin{equation}  \label{fPT4:b3} 
u (t+ \tau ) = e^{t \Delta  } u (\tau ) + \int _0^t e^{(t -s) \Delta  }  |u (\tau +s)|^\alpha u (\tau +s) \, ds
\end{equation}  
for all $0\le t\le T-\tau $. Applying Theorem~\ref{eRHE0}, we conclude that $u$ is indeed a classical solution of~\eqref{Inlh}  on $(0,T)\times \R^N $.
Moreover, $ \| w(t) - e^{t \Delta } \DIbd \| _{ L^\infty  } \to 0$ as $t\downarrow 0$ by Theorem~\ref{ePSSZ1}~\eqref{ePSSZ1:12}, and it follows (see Lemma~\ref{eHO1}) that $w(t) \to \DIbd$ in $L^p _\Loc (\R^N )$ for all $1 \le p <\infty $. Applying Proposition~\ref{eRESS1}, we conclude that $u (t)  \to  \DI$ in $L^p _\Loc ( \R^N  \setminus \{ 0\})$, which proves the first part of Theorem~~\ref{ePSS1}.

Property~\eqref{ePSS1:1} follows from  Theorem~\ref{ePSSZ1}~\eqref{ePSSZ1:12}, since $u(t) - U(t) - e^{t \Delta } (\DI - \mu  |\cdot |^{-\frac {2} {\alpha }})= w(t)- e^{t\Delta }\DIbd$. 

Next, we prove Property~\eqref{ePSS1:2}, so we assume $\alpha >\frac {2} {N}$. 
Since $\DIbd = \DI  - \MUU  |\cdot |^{-\frac {2} {\alpha }}$,  it follows from equation~\eqref{fPT4}, and~\eqref{fAD1}-\eqref{fAD2} that
\begin{equation} \label{fNZ2:b2} 
u (t) = U(t) +  e^{t \Delta  } \DI - \MUU  e^{t \Delta  }  |\cdot |^{-\frac {2} {\alpha }} + \int _0^t e^{(t -s) \Delta  }( |u|^\alpha u -  |U|^\alpha  U) (s) \, ds .
\end{equation} 
Note that $ |u|^\alpha u -  |U|^\alpha  U = \Md w \in L^{\infty}((0,T)\times \R^{N})$, so that the integral on the right-hand side of~\eqref{fNZ2:b2} is in $C([0, T], \Cbu )$ by Lemma~\ref{eHRN2}. 
We claim that 
\begin{equation} \label{fNZ2:b4} 
\int _0^t e^{(t -s) \Delta  }( |u|^\alpha u -  |U|^\alpha  U) (s)   = \int _0^t e^{(t -s) \Delta  } |u|^\alpha u (s)   - \int _0^t e^{(t -s) \Delta  } |U|^\alpha  U (s) 
\end{equation} 
where the integrals in the right-hand side of~\eqref{fNZ2:b4} are convergent in $L^r (\R^N ) +L^\infty  (\R^N ) $ for all $1\le r< \frac {N\alpha } {2}$. 
Indeed, by Proposition~\ref{eRESS1}, the second integral is convergent in $L^p(\R^N )$ for all $p \ge 1$,  $p >\frac {N\alpha } {2(\alpha +1) } $. 
Moreover, 
\begin{equation*} 
 \big| |u|^\alpha u \big| \le C (  |U|^{\alpha +1} +  |w|^{\alpha +1} ) .
\end{equation*} 
Since
\begin{equation}  \label{feQSol3} 
   \| U(s)\| _{ L^{ ( \alpha +1) r } }^{\alpha +1}
=   s^{- \frac {\alpha +1} {\alpha } + \frac {N} {2 r }}  \| f  \| _{ L^{(\alpha +1) r } }^{\alpha +1} 
\end{equation} 
for all $s>0$, $- \frac {\alpha +1} {\alpha } + \frac {N} {2 r } >-1$, and  $w\in L^\infty ((0,T) \times \R^N )$, we see that  the first integral is convergent in $L^p(\R^N ) + L^\infty  (\R^N ) $.
Now if $p $ is as above and $1\le r\le \frac {N\alpha } {2 (\alpha +1)}$, we have $L^p (\R^N ) \hookrightarrow L^r(\R^N ) + L^\infty  (\R^N )$, so that both integrals are convergent in $L^r(\R^N ) + L^\infty  (\R^N )$. This proves the claim~\eqref{fNZ2:b4}. 
Equation~\eqref{NLHI} follows from~\eqref{fNZ2:b2}, \eqref{fNZ2:b1} and~\eqref{fNZ2:b4}, hence Property~\eqref{ePSS1:2} is established.

We finally prove Property~\eqref{ePSS1:3}, so we assume~\eqref{ePSS1:1b}. We use a comparison argument. Let $\xi \in C^\infty _\Comp (\R^N )$ satisfy $0\le \xi \le 1$ and $\xi (x)=1$ for $ |x|\le 1$, and set
\begin{equation} \label{fBD1} 
D =  \| \Md w \| _{ L^\infty ((0,T) \times \R^N ) } .
\end{equation} 
It follows from~\eqref{fAD1}-\eqref{fAD2} that 
$(1-\xi) |\Md w| \le C (1-\xi)( |U|^\alpha +  |w|^\alpha ) |w|$. Since $U$ is bounded on the support of $1-\xi $, we deduce that
\begin{equation} \label{fBD2} 
(1-\xi)\Md w = (1-\xi) \rho w
\end{equation} 
for some $\rho \in L^\infty ( (0,T) \times \R^N )$, and we let
\begin{equation} \label{fBD2:b1} 
K=  \| \rho \| _{ L^\infty  }.
\end{equation} 
On the other hand, it follows from~\eqref{ePSS1:1b} that there exists $z_0 \in \Cz$ such that
\begin{equation} \label{fBD3} 
 | \DIbd | \le z_0 
\end{equation} 
a.e. We let $z\in C ([0, T], \Cz)$ be the solution of 
\begin{equation}  \label{fBD4} 
\begin{cases} 
z_t - \Delta z = D \xi + K  z \\
z (0) =  z_0
\end{cases} 
\end{equation} 
so that $z\ge 0$, and $z$ is smooth on $(0,T) \times \R^N $.
Moreover, it follows from~\eqref{fBD1} and~\eqref{fBD2:b1} that
\begin{equation} \label{fBD2:b2} 
D \xi + K  z \ge   |\Md w| \xi  + (1- \xi ) \rho z .
\end{equation} 
Note also that both $u$ and $U$ are classical solutions of~\eqref{Inlh} on $(0,T) \times \R^N $, so that $w_t - \Delta w= \Md w$ on  $(0,T) \times \R^N $. Setting $W= w-z$ and applying~\eqref{fBD4}, \eqref{fBD2:b2}, and~\eqref{fBD2},  we deduce that  
\begin{equation} \label{fBD2:b3} 
\begin{split} 
W _t -   \Delta W  -  (1- \xi ) \rho W  
& = - D \xi - K  z +  \xi \Md w  +  (1- \xi )[ \Md w-  \rho W ] \\
&\le -(  | \Md w| - \Md w ) \xi  +  (1- \xi )[ \Md w- \rho w ]  \\
&= -(  | \Md w| - \Md w ) \xi \le 0 .
\end{split} 
\end{equation} 
Let now $\varphi (x)= \exp (- \sqrt {1 +  |x|^2})$, so that $\Delta \varphi \le 2\varphi $. 
Multiplying~\eqref{fBD2:b3} by $\varphi W^+$, where $W^+= \max\{ W, 0\}$ is the positive part of $W$, and integrating by parts, we obtain
\begin{equation} \label{fBD2:b4} 
\begin{split} 
\frac {1} {2} \frac {d} {dt} \int  _{ \R^N  } \varphi  (W^+)^2 & \le \int  _{ \R^N  } (1- \xi ) \rho \varphi (W^+)^2 - \int  _{ \R^N  } 
\nabla W \cdot \nabla [\varphi W^+ ] \\ &
\le K \int  _{ \R^N  } \varphi  (W^+)^2 - \int  _{ \R^N  } 
 W^+ \nabla (W^+) \cdot \nabla \varphi    \\ &
= K \int  _{ \R^N  } \varphi  (W^+)^2  + \frac {1} {2} \int  _{ \R^N  } 
(W^+)^2 \Delta \varphi \\ & \le (K + 2) \int  _{ \R^N  } \varphi  (W^+)^2.
\end{split} 
\end{equation} 
Note that all the above calculations are justified by the exponential decay of $\varphi $. 
Moreover, it follows from~\eqref{fePSS1:1:1} that $ \|w(t)- e^{t\Delta } w_0 \| _{ L^\infty  }\to 0$ as $t\to 0$.
Since $w_0\in L^\infty  (\R^N ) $, we also have $e^{t\Delta } w_0  \to w_0$ in $L^1_\Loc  (\R^N ) $,
so that $ W(t) \to w_0 - z_0 $ in $L^1_\Loc (\R^N ) $. Since $W\in L^\infty ((0,T)\times \R^N )$, we deduce that
 \begin{equation*} 
  \int  _{ \R^N  } \varphi  (W^+)^2 \goto  _{ t\to 0 }   \int  _{ \R^N  } \varphi  [( w_0-z_0 )^+]^2 =0.
 \end{equation*} 
This, together with inequality~\eqref{fBD2:b4}, implies that $W^+ \equiv 0$, so that $w\le z$. 
A similar calculation with $ \widetilde{W} = -w-z$ shows that $w\ge -z$.
Thus we see that $ |w|\le z$. It follows in particular that $  | w- e^{t \Delta } \DIbd | \le z + e^{t \Delta } z_0 \in C( [0,T], \Cz )$.  Since $  w- e^{t \Delta } \DIbd  \in C([0, T], \Cbu )$, we obtain $  w- e^{t \Delta } \DIbd  \in C([0, T], \Cz )$, and
Property~\eqref{ePSS1:3} easily follows.
\end{proof} 

\begin{proof} [Proof of Theorem~$\ref{ePR1}$]
{To show the existence of multiple solutions, apply} Theorem~\ref{ePSS1} to each of the infinitely many radially symmetric regular self-similar solutions of~\eqref{Inlh} given by Proposition~\ref{eRESS1}. The corresponding solutions are distinct by Remark~\ref{eRemNU1}. {Setting $\MUU_0=0$ if $\alpha \le \frac {2} {N}$, and $\MUU_0= [ \alpha ^{\frac {1} {\alpha  } }   [e^{ \Delta }  |\cdot |^{-\frac {2} {\alpha } }] (0) ]^{-1}$ 
if $\alpha > \frac {2} {N}$,  the fact that there is no local nonnegative solutions if $u_0 \ge 0$ and $\MUU>\MUU_0$ follows from Corollary~\ref{eFR3}.}
\end{proof} 

\begin{rem} \label{eRR1} 
We can let $\MUU =0$ in Theorem~\ref{ePR1}.  In particular, if we let $\DI \in L^\infty  (\R^N ) $ which vanishes in a neighborhood of $0$, then we obtain infinitely many sign-changing solutions of~\eqref{Inlh} (which have a singularity as $t\to 0$). This  extends the nonuniqueness results of~\cite{HarauxW, Weissler6}, which correspond to $\DI \equiv 0$.
\end{rem} 

\section{Sign-changing solutions on domains} \label{sSCSD} 

Let $\Omega $ be a bounded, smooth domain of $\R^N $ and assume $0\in \Omega $.
We consider the equation~\eqref{NLHD} and we look for singular solutions that behave like perturbations of self-similar solutions.

\begin{thm} \label{ePSD1} 
Let $\Omega $ be a bounded, smooth domain of $\R^N $, $N\ge 1$, and let $\alpha >0$. 
Suppose $U$ is a radially symmetric, regular self-similar solution of~\eqref{Inlh} on $\R^N $ with initial value $\MUU  |x|^{-\frac {2} {\alpha }}$, for some $\MUU \in \R$, in the sense~\eqref{fEP2}.
Suppose that there exists $\delta >0$ such that $\{  |x|<\delta \} \subset \Omega $ and let  $\DI \in L^\infty  (\Omega \cap \{  |x|> \delta \}) $ such that 
\begin{equation} \label{ePSD1:1zzz} 
\DI (x) = \MUU  |x|^{-\frac {2} {\alpha }}, \quad  |x|<\delta .
\end{equation} 
It follows that there exist $T>0$ and a solution $u\in C ((0, T], \Czo )$ of~\eqref{NLHD} such that  $u (t)  \to  \DI$ as $t\to 0$ in $L^p ( \Omega \cap \{  |x| > \varepsilon  \} )$ for all $ \varepsilon >0$ and all $p<\infty $.
Moreover, the following properties hold.
\begin{enumerate}[{\rm (i)}] 
\item \label{ePSD1:1} 
$u- U \in L^\infty ((0,T) \times \Omega )$. 

\item \label{ePSD1:2} 
If $\alpha >\frac {2} {N}$, then $u$ is a solution of the integral equation~\eqref{NLHID}  where  the integral is convergent in $L^r (\Omega  ) )$ for all $1\le  r < \frac {N\alpha } {2}$, and each term is in $C((0,T  ), \Czo )$. Moreover, $ u (t) \to \DI $ as $t\to 0$ in $   L^r (\Omega )$ for all $1\le  r < \frac {N\alpha } {2}$.
\end{enumerate}
\end{thm}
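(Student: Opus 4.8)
The plan is to mirror the proof of Theorem~\ref{ePSS1}, again invoking the perturbation result Theorem~\ref{ePSSZ1}, the one genuine difference being that on a bounded domain the cut-off $\Psi$ must be nontrivial so that $u=\Psi U+w$ inherits the Dirichlet condition. First I would fix $\Psi\in C^\infty_\Comp(\Omega)$ (extended by $0$ to $\R^N$) with $0\le\Psi\le1$ and $\Psi\equiv1$ on $\{|x|<\delta\}$, which is possible since $\{|x|<\delta\}\subset\Omega$; such $\Psi$ satisfies~\eqref{ePSSZ1:4:b}. Because $U$ is a radially symmetric regular self-similar solution, the bound~\eqref{fSSS2} holds, and from it one reads off~\eqref{ePSSZ1:1b1}--\eqref{ePSSZ1:4} for a suitable $\CA1$; in particular $|\nabla U|^\alpha$ is bounded on $\Omega\cap\{|x|>\delta\}$ because there $|U|^\alpha+|\nabla U|^{\frac{2\alpha}{\alpha+2}}\le C\delta^{-2}$. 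I would then set $\DIbd=\DI-\Psi\,\MUU|\cdot|^{-\frac{2}{\alpha}}$: on $\{|x|<\delta\}$ we have $\Psi\equiv1$ and $\DI=\MUU|x|^{-\frac{2}{\alpha}}$, so $\DIbd=0$ there, while on $\Omega\cap\{|x|>\delta\}$ both $\DI$ and $\Psi\MUU|\cdot|^{-\frac{2}{\alpha}}$ are bounded, so $\DIbd\in L^\infty(\Omega)$ vanishes near $0$, i.e.~\eqref{ePSSZ1:5} holds. Theorem~\ref{ePSSZ1} then produces $T>0$ and $w\in L^\infty((0,T)\times\Omega)$ solving~\eqref{fPT4}, with $\Md w\in L^\infty$ and $\|w(t)-e^{t\Delta_\Omega}\DIbd\|_{L^\infty}\le Ct$.

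Next I set $u=\Psi U+w$. The key algebraic identity, obtained by expanding $\Delta(\Psi U)$ and using~\eqref{fPT3}, is that the inhomogeneity of $\Psi U$, namely $\Psi|U|^\alpha U-U\Delta\Psi-2\nabla\Psi\cdot\nabla U$, adds to $\Md w$ to give precisely $|u|^\alpha u$. To use this rigorously I would, for each $0<\tau<T$, write the Dirichlet Duhamel identity on $\Omega$ for $w$ (from the appendix, using $\Md w\in L^\infty$ and~\eqref{fPT4}) and the one for $\Psi U$ (a classical solution of an inhomogeneous heat equation on $(\tau,T)\times\Omega$ that vanishes on $\partial\Omega$ since $\Psi\in C^\infty_\Comp(\Omega)$), and add them; the cut-off terms cancel and yield
\[
u(t+\tau)=e^{t\Delta_\Omega}u(\tau)+\int_0^t e^{(t-s)\Delta_\Omega}|u(\tau+s)|^\alpha u(\tau+s)\,ds .
\]
Since $w,\Psi U\in C((0,T],\Czo)$, also $u\in C((0,T],\Czo)$, and the bounded-domain analogue of Theorem~\ref{eRHE0} (Appendix~\ref{sRHE}) shows $u$ is a classical solution of~\eqref{NLHD}; the condition $u_{|\partial\Omega}=0$ holds because both summands lie in $\Czo$. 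This also gives Property~\eqref{ePSD1:1}, since $u-U=(\Psi-1)U+w$ vanishes on $\{|x|<\delta\}$ and is bounded on $\{|x|>\delta\}$, where $U$ is bounded by~\eqref{fSSS2}.

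For the initial trace I would use that, $\Omega$ being bounded, $\DIbd\in L^\infty(\Omega)\subset L^p(\Omega)$ and the Dirichlet semigroup is strongly continuous on $L^p(\Omega)$, so $e^{t\Delta_\Omega}\DIbd\to\DIbd$ in $L^p(\Omega)$; together with $\|w(t)-e^{t\Delta_\Omega}\DIbd\|_{L^\infty}\le Ct$ this gives $w(t)\to\DIbd$ in $L^p(\Omega)$ for every $p<\infty$. Meanwhile $\Psi U(t)\to\Psi\MUU|\cdot|^{-\frac{2}{\alpha}}$ uniformly on $\{|x|>\varepsilon\}$ by~\eqref{fEP2}, and summing the two limits on $\Omega\cap\{|x|>\varepsilon\}$ gives $u(t)\to\Psi\MUU|\cdot|^{-\frac{2}{\alpha}}+\DIbd=\DI$ in $L^p(\Omega\cap\{|x|>\varepsilon\})$.

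For Property~\eqref{ePSD1:2} I assume $\alpha>\frac{2}{N}$ and pass to the limit $\tau\downarrow0$ in the displayed Duhamel identity. The $L^r(\Omega)$ convergence $u(\tau)\to\DI$ (for $1\le r<\frac{N\alpha}{2}$) and continuity of $e^{t\Delta_\Omega}$ on $L^r$ handle the linear term, while the pointwise bound $|u|^\alpha u\le C(|U|^{\alpha+1}+|w|^{\alpha+1})$ together with $\|U(s)\|_{L^{(\alpha+1)r}}^{\alpha+1}=s^{-\frac{\alpha+1}{\alpha}+\frac{N}{2r}}\|f\|_{L^{(\alpha+1)r}}^{\alpha+1}$ shows the Duhamel integrand is integrable in $L^r(\Omega)$ near $s=0$ exactly when $r<\frac{N\alpha}{2}$; because $\Omega$ is bounded, $L^p(\Omega)\hookrightarrow L^r(\Omega)$ for $r\le p$, so no $L^\infty$ correction is needed and~\eqref{NLHID} holds with genuine $L^r(\Omega)$ convergence, the same estimate giving $u(t)\to\DI$ in $L^r(\Omega)$. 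I expect the main obstacle to be the rigorous passage, on the bounded domain, from the $L^\infty$-in-space integral equation~\eqref{fPT4} built from the Dirichlet kernel to the assertion that $u$ is a classical solution in $C((0,T],\Czo)$ vanishing on $\partial\Omega$: this is precisely where the smoothness of $\partial\Omega$ enters, through the boundary regularity of the Dirichlet heat semigroup collected in the appendices. Everything else is a routine adaptation of the $\R^N$ argument, with the bounded-domain embeddings actually simplifying the $L^r$ statements.
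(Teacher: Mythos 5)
Your proposal is correct and follows essentially the same route as the paper's proof: the same cut-off $\Psi$, the same choice $\DIbd=\DI-\Psi\,\MUU|\cdot|^{-\frac{2}{\alpha}}$, the same application of Theorem~\ref{ePSSZ1}, the same summation of the Duhamel identities for $w$ and $\Psi U$ followed by Theorem~\ref{eRHE2}, and the same limiting argument in the shifted integral equation for part~(ii). The only point to make fully explicit is that the $L^r(\Omega)$ convergence $u(\tau)\to\DI$ used in part~(ii) requires the convergence $U(t)\to\MUU|\cdot|^{-\frac{2}{\alpha}}$ in $L^r(\Omega)$ from Proposition~\ref{eRESS1} (valid since $\alpha>\frac{2}{N}$), not just the $L^p(\Omega\cap\{|x|>\varepsilon\})$ convergence established earlier.
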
 

\begin{rem} \label{eRemNU2} 
Suppose $U^1 \not = U^2$ are two radially symmetric, regular self-similar solutions of~\eqref{Inlh} on $\R^N $ with the same initial value $\MUU  |x|^{-\frac {2} {\alpha }}$, where $\MUU \in \R$, in the sense~\eqref{fEP2}. 
Suppose $u^1, u^2$ are solutions of~\eqref{Inlh}  on $(0,T)$, which are perturbations of the solutions $U^1, U^2$, respectively, in the sense of Theorem~\ref{ePSD1}. 
It follows that $u^1 \not = u^2$. More precisely, estimate~\eqref{eRemNU1:1} holds. 
This follows from the argument of Remark~\ref{eRemNU1}.
\end{rem} 

\begin{proof} [Proof of Theorem~$\ref{ePSD1}$]
We let $\nu >0$ be sufficiently small so that $\{  |x|<\delta +\nu \} \subset \Omega $, we fix a function $\Psi \in C^\infty _\Comp (\Omega )$ such that $0\le \Psi \le 1$, $\Psi (x)= 1$ for $ |x|\le \delta $,  $\Psi (x)=0$ for $ |x|\ge \delta +\nu$, and we define $\DIbd \in L^\infty (\Omega ) $ by
\begin{equation} \label{fCP1} 
\DIbd (x)= \DI (x) - \MUU  |x|^{-\frac {2} {\alpha }} \Psi (x) .
\end{equation} 
We see in particular that
\begin{equation} \label{fCP1:1} 
\DIbd (x) =0\quad  \text{if}\quad  |x|\le \delta .
\end{equation} 
Applying Theorem~\ref{ePSSZ1}, it follows that there exist $T>0$ and and a function $w\in L^{\infty}((0,T)\times \Omega )$ such that $\Md w \in L^{\infty}((0,T)\times \Omega )$, which is a solution of~\eqref{fPT4} with 
\begin{equation} \label{fCP4}
\Md w= 2\nabla U\cdot \nabla \Psi + U\Delta \Psi +  |u|^\alpha u - \Psi  |U|^\alpha U
\end{equation} 
where
\begin{equation} \label{fCP3}
u = \Psi U + w .
\end{equation} 
Note that $\Md w \in  L^{q}((0,T)\times \Omega )= L^q ((0, T), L^q (\Omega ) )$, for every $q<\infty $, so it follows easily from equation~\eqref{fPT4} that $w\in C ((0, T], \Czo )$, $w - e^{t\Delta _\Omega } \DIbd \in C ( [0,T], \Czo )$ and that, given any $0<\tau <T$,
\begin{equation}  \label{fCP5} 
w (t+ \tau ) = e^{t \Delta  _\Omega } w(\tau ) + \int _0^t e^{(t -s) \Delta  _\Omega } \Md w (\tau +s) \, ds
\end{equation}  
for $0\le t\le T - \tau $. Moreover, $V= \Psi U$ is $C^1 $ in $t$, $C^2$ in $x$, vanishes on a compact subset of $\Omega $, and satisfies the equation 
\begin{equation*} 
V_t - \Delta V= - 2\nabla U\cdot \nabla \Psi -  U\Delta \Psi + \Psi  |U|^\alpha U = - \Md w +  |u|^\alpha u
\end{equation*} 
on $(0,\infty ) \times \Omega $. It follows that 
\begin{equation}  \label{fCP6} 
V (t+ \tau ) = e^{t \Delta  _\Omega } V(\tau ) + \int _0^t e^{(t -s) \Delta  _\Omega } (- \Md w +  |u|^\alpha u) (\tau +s) \, ds
\end{equation}  
for $t\ge 0$. Summing~\eqref{fCP5} and~\eqref{fCP6}, we deduce that $u\in C((0,T), \Czo)$ satisfies 
\begin{equation}  \label{fCP7} 
u (t+ \tau ) = e^{t \Delta  _\Omega } u (\tau ) + \int _0^t e^{(t -s) \Delta  _\Omega }( |u|^\alpha u) (\tau +s) \, ds
\end{equation}  
for $0\le t\le T - \tau $. By standard regularity (see e.g. Theorem~\ref{eRHE2}), $u$ is a classical solution of~\eqref{NLHD} on  $(0,T) \times \Omega $.
In addition, it follows from~\eqref{fCP1} and~\eqref{fCP3} that
\begin{equation}  \label{fCP8} 
u(t) - \DI =  \Psi ( U (t)  - \MUU  |\cdot |^{-\frac {2} {\alpha }})+ w (t) - e^{t \Delta _\Omega }\DIbd  + e^{t \Delta _\Omega }\DIbd - \DIbd.
\end{equation} 
We have $ \| w(t) - e^{t \Delta _\Omega } \DIbd \| _{ L^\infty  } \to 0$ as $t\downarrow 0$ by Theorem~\ref{ePSSZ1}~\eqref{ePSSZ1:12}. Moreover, $\DIbd \in L^\infty  (\Omega ) $, so that $ \|  e^{t \Delta _\Omega } \DIbd - \DIbd \| _{ L^p  } \to 0$ as $t\downarrow 0$, for all $p<\infty $. Also, $U (t)  - \MUU  |\cdot |^{-\frac {2} {\alpha }} \to 0$ as $t\downarrow 0$ uniformly on $ \{  |x|>\varepsilon  \} $ for every $\varepsilon >0$, and this  proves the first part of the statement.

Next, we observe that by~\eqref{fCP3}, $u-U= (\Psi -1) U + w$. Since $1-\Psi $ vanishes in a neighborhood of $0$, it follows that $ (\Psi -1) U\in L^\infty ((0,\infty ) \times \R^N ))$. 
Moreover, $w\in L^{\infty}((0,T)\times \R^{N})$, and Property~\eqref{ePSD1:1} follows.

We now prove Property~\eqref{ePSD1:2}, so we suppose $\alpha >\frac {2} {N}$.
It follows in particular (see Proposition~\ref{eRESS1}) that  $U (t)  \to \MUU  |\cdot |^{-\frac {2} {\alpha }}$ in $L^p (\Omega ) $ as $t\downarrow 0$, for all $1\le p<\frac {N\alpha } {2}$. 
Therefore, we deduce from~\eqref{fCP8} that $u(t) \to \DI$  likewise.  
Moreover, $u\in C ((0, T], \Czo )$ is a solution of~\eqref{NLHD}, so that
\begin{equation}  \label{fCP9} 
u(t)= e^{ (t-\varepsilon )\Delta _\Omega } u(\varepsilon ) + \int _\varepsilon ^t e^{ (t-\varepsilon )\Delta _\Omega }  |u|^\alpha u(s)\,ds
\end{equation} 
for all $0<\varepsilon <t<T$.
Since $u(t) \to \DI$ in $L^1 (\Omega ) $, we see that
\begin{equation} \label{fCP10} 
e^{ (t-\varepsilon )\Delta _\Omega } u(\varepsilon ) \goto  _{ \varepsilon \downarrow 0 }  e^{ t\Delta _\Omega }  \DI 
\end{equation} 
in $L^\infty  ( \Omega ) $. 
Let now $1\le r< \frac {N\alpha } {2}$ and let $ \frac {N\alpha } {2(\alpha +1)}\le p< \frac {N\alpha } {2}$ be such that $p\ge r$. We have $ | u |^{\alpha +1}\le C (  |U|^{\alpha +1} +  |w|^{\alpha +1})$, and $ |w|^{\alpha +1}$ is bounded in $L^\infty  (\Omega ) $, hence in $L^r (\Omega ) $. Moreover, $  \|  |U|^{\alpha +1} \| _{ L^p } $, hence  $  \|  |U|^{\alpha +1} \| _{ L^r} $, is integrable on $(0,T)$, see formula~\eqref{feQSol3}. 
Therefore, one easily passes to the limit in~\eqref{fCP9} as $\varepsilon \downarrow 0$ and obtain equation~\eqref{NLHID}, where the integral is convergent in $L^r (\Omega ) $ for all $1\le r<  \frac {N\alpha } {2}$. Since the first two terms in~\eqref{NLHID} are in $C((0,\infty ), \Czo )$, so is the integral term.
This proves Property~\eqref{ePSD1:2}.
\end{proof} 

\begin{proof} [Proof of Theorem~$\ref{ePR2}$]
Without loss of generality, we suppose $x_0=0$. 
{To show the existence of multiple solutions, apply} Theorem~\ref{ePSD1} to each of the infinitely many radially symmetric regular self-similar solutions of~\eqref{Inlh} given by Proposition~\ref{eRESS1}, with $\DI (x) = \MUU  \zeta  (x)  |x|^{-\frac {2} {\alpha }}$. The corresponding solutions are distinct by Remark~\ref{eRemNU2}. {Setting $\MUU_0=0$ if $\alpha \le \frac {2} {N}$, and $\MUU_0= [ \alpha ^{\frac {1} {\alpha  } }   [e^{ \Delta }  |\cdot |^{-\frac {2} {\alpha } }] (0) ]^{-1}$ 
if $\alpha > \frac {2} {N}$,  the fact that there is no local nonnegative solutions if $u_0>0$ and $\MUU>\MUU_0$ follows from Corollary~\ref{eFR3}. }
\end{proof} 

\begin{rem} 
If we apply Theorem~\ref{ePR2} in the case $\mu =0$, we obtain a nonuniqueness result for equation~\eqref{NLHD} with initial values $\DI \in L^\infty  (\Omega ) $. The only requirement is that $\DI$ vanish on some open subset of $\Omega $. Unlike the results in~\cite{Baras} and~\cite[Theorem~15.3~(ii)]{QuittnerS}, we do not require radial symmetry or positivity.
\end{rem} 

\appendix

\section{The heat equation on a domain} \label{sLHEOM} 

We consider an open, connected subset $\Omega \subset \R^N $. 
We recall that the heat semigroup on $\Omega $ with Dirichlet boundary conditions, $(e^{t \Delta }) _{ t\ge 0 }$ is the strongly continuous semigroup on $L^2 (\Omega ) $ generated by the operator $\Delta $ with domain $\{u\in H^1_0 (\Omega ) ;\, \Delta u\in L^2 (\Omega )  \}$. 
We recall that $e^{t \Delta }$ is a contraction of $L^p (\Omega ) $ for every $1\le p\le \infty $. See e.g.~\cite[Theorem~1.3.3, p.~14]{Davies}.
The corresponding heat kernel  $G_\Omega  $ satisfies
\begin{gather} 
G_\Omega \in C^\infty ((0,\infty ) \times \Omega \times \Omega  ) \label{fHK1} \\
0 \le G_\Omega (t, x, y) \le K t^{-\frac {N} {2}} e^{- \frac { |x-y|^2} {\delta t}} \label{fHK2}
\end{gather} 
where $K, \delta >0$ are two constants independent of $t,x$. See e.g.~\cite{Davies}, in particular Theorem~5.2.1 p.~149 and Corollary~3.2.8 p.~89. 

\begin{lem} \label{eHO1} 
Let $\DI \in L^\infty  (\Omega  ) $ and let $u(t)= e^{t \Delta }\DI $ for $t\ge 0$, in the sense that
\begin{equation*} 
u(t, x)= \int _\Omega G_\Omega (t, x, y) \DI (y) \, dy
\end{equation*} 
It follows that $u \in  C((0, \infty )\times \Omega  )$. 
Moreover, if $1\le p<\infty $, then 
\begin{equation} \label{eHO1:1} 
  \|  u(t) - \DI \| _{ L^p (B) } \goto _{  t\downarrow 0 } 0 
\end{equation}  
for every bounded subset $B\subset \Omega $.
\end{lem}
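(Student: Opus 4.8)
The plan is to treat the two assertions separately, deducing both from the kernel properties \eqref{fHK1}--\eqref{fHK2} together with the sub-Markovian bound $0\le\int_\Omega G_\Omega(t,x,y)\,dy\le1$, which follows from positivity of $G_\Omega$ and the fact that $e^{t\Delta _\Omega}$ is an $L^\infty$-contraction.

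First I would establish that $u\in C((0,\infty)\times\Omega)$ by a dominated-convergence argument. Fix a compact neighborhood $[t_1,t_2]\times\overline B$ of a point of $(0,\infty)\times\Omega$, with $0<t_1\le t_2$ and $B$ bounded. For $(t,x)$ in this neighborhood, \eqref{fHK2} gives $G_\Omega(t,x,y)\,|\DI(y)|\le K\,t_1^{-N/2}e^{-|x-y|^2/(\delta t_2)}\|\DI\|_{L^\infty}$, and since $|x|$ stays bounded on $\overline B$ this is dominated by a single function of $y$ that is integrable over $\Omega$, thanks to the Gaussian decay. Because $G_\Omega(\cdot,\cdot,y)$ is continuous by \eqref{fHK1}, the dominated convergence theorem applied along arbitrary sequences $(t_n,x_n)\to(t_0,x_0)$ yields $u(t_n,x_n)\to u(t_0,x_0)$, whence continuity.

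For \eqref{eHO1:1}, the key reduction is that, since $\|u(t)-\DI\|_{L^\infty}\le 2\|\DI\|_{L^\infty}$ and $B$ has finite measure, it suffices by dominated convergence to prove $u(t,x)\to\DI(x)$ as $t\downarrow0$ for a.e.\ $x\in\Omega$. Fix a Lebesgue point $x\in\Omega$ of $\DI$ and write
\[
u(t,x)-\DI(x)=\int_\Omega G_\Omega(t,x,y)\bigl(\DI(y)-\DI(x)\bigr)\,dy+\DI(x)\Bigl(\int_\Omega G_\Omega(t,x,y)\,dy-1\Bigr).
\]
The first integral tends to $0$: dominating $G_\Omega$ by the Gaussian $K\,t^{-N/2}e^{-|x-y|^2/(\delta t)}$ via \eqref{fHK2}, this is bounded by a truncated Gaussian approximate identity applied to $|\DI(\cdot)-\DI(x)|$, which tends to $0$ at the Lebesgue point $x$ (the Gaussian tail for $|y-x|\ge r$, where $B(x,r)\subset\Omega$, is $O(\|\DI\|_{L^\infty}\int_{|z|\ge r}t^{-N/2}e^{-|z|^2/(\delta t)}\,dz)\to0$, and the inner part is controlled by the Lebesgue-point property). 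It then remains to show that the mass $\int_\Omega G_\Omega(t,x,y)\,dy$ tends to $1$.

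This mass recovery is where the boundary enters, and I expect it to be the main obstacle, since the upper bound \eqref{fHK2} gives no lower control on $G_\Omega$ and mass can a priori be lost near $\partial\Omega$. The upper bound $\le1$ is immediate; for the lower bound I would use an inner ball. Choosing $0<\rho<R$ with $B(x,R)\subset\Omega$ and using the domain monotonicity $G_{B(x,R)}\le G_\Omega$ (already invoked in Proposition~\ref{eFR2}) together with the comparison \eqref{fVDB1} centered at $x$ (applied to $\varphi=\mathbf{1}_{B(x,\rho)}$), one obtains at the point $x$
\[
\int_\Omega G_\Omega(t,x,y)\,dy\ \ge\ e^{t\Delta_{B(x,R)}}\mathbf{1}_{B(x,\rho)}(x)\ \ge\ e^{-\pi^2N^2t/(4(R-\rho)^2)}\,e^{t\Delta}\mathbf{1}_{B(x,\rho)}(x).
\]
The right-hand side tends to $1$ as $t\downarrow0$, because the free heat semigroup applied to the indicator of a ball tends to $1$ at its center and the exponential prefactor tends to $1$. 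Hence $\liminf_{t\downarrow0}\int_\Omega G_\Omega(t,x,y)\,dy\ge1$, which combined with the upper bound gives the mass convergence. This yields $u(t,x)\to\DI(x)$ for a.e.\ $x\in\Omega$, and the dominated convergence theorem on $B$ then gives \eqref{eHO1:1}.
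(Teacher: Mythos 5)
Your proof is correct, but the second half takes a genuinely different route from the paper's. The continuity of $u$ on $(0,\infty)\times\Omega$ is handled identically in both (Gaussian domination from \eqref{fHK2} plus dominated convergence using \eqref{fHK1}). For \eqref{eHO1:1}, however, the paper does not argue pointwise at all: it splits $\DI=u_1+u_2$ with $u_1=\DI\,1_{\{|x|<R+1\}}$ compactly supported (hence in $L^p(\Omega)$ for every $p$) and $u_2$ vanishing on a neighborhood of $B$, then invokes the strong continuity of $e^{t\Delta_\Omega}$ on $L^p(\Omega)$ to get $\|e^{t\Delta_\Omega}u_1-u_1\|_{L^p(\Omega)}\to0$, and uses the Gaussian upper bound \eqref{fHK2} together with $\dist(B,\Supp u_2)\ge1$ to get $\|e^{t\Delta_\Omega}u_2\|_{L^\infty(B)}\le C e^{-1/(2\delta t)}\to0$. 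That decomposition completely sidesteps the question of whether $\int_\Omega G_\Omega(t,x,y)\,dy$ recovers full mass as $t\downarrow0$, which you correctly identify as the delicate point of your approach and which you resolve with domain monotonicity of the kernel plus van den Berg's inner-ball comparison \eqref{fVDB1} centered at $x$ --- tools the paper already deploys in the proof of Proposition~\ref{eFR2}, so nothing outside the paper's toolkit is required. The trade-off: your Lebesgue-point argument is longer and leans on heavier kernel estimates, but it yields a slightly stronger conclusion (pointwise convergence $u(t,x)\to\DI(x)$ at every Lebesgue point, hence a.e., from which \eqref{eHO1:1} follows by dominated convergence on the finite-measure set $B$), whereas the paper's argument is shorter and reduces everything to the cited $C_0$-semigroup property of the Dirichlet heat semigroup on $L^p(\Omega)$.
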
 

\begin{proof} 
The property $u \in  C((0, \infty )\times \Omega  )$ follows easily from the continuity property~\eqref{fHK1}, the bound~\eqref{fHK2}, and the dominated convergence theorem.
Let $R>0$ be such that $B\subset  \{  |x|<R \} $ and let
\begin{equation*} 
u_1 = \DI 1 _{ \{  |x|<R+1 \} } \quad u_2 = \DI - u_1. 
\end{equation*} 
In particular, $u_1$ has compact support, hence $u_1 \in L^p (\Omega ) $ for all $1\le p\le \infty $.  Since $\DI= u_1 + u_2$, we see that
\begin{equation} 
\begin{split} 
 \|  u(t) - \DI \| _{ L^p (B) } &  \le   \|   e^{t\Delta }u_1 -u_1  \| _{ L^p (B ) } +  \|   e^{t\Delta }u_2 - u_2 \| _{ L^p (B) } \\ &  \le   \|   e^{t\Delta }u_1 -u_1  \| _{ L^p (\Omega ) } +  \|   e^{t\Delta }u_2  \| _{ L^p (B) }
\end{split} 
\end{equation} 
since $u_2 =0$ on $B$. Since $u_1\in L^p (\Omega ) $ and $(e^{t \Delta }) _{ t\ge 0 }$ is a strongly continuous semigroup on $L^p (\Omega ) $, it follows that $  \|   e^{t\Delta }u_1 -u_1  \| _{ L^p (\Omega ) } \to 0$  as $t\to 0$. Thus we need only show that $  \|   e^{t\Delta }u_2  \| _{ L^\infty  (B) } \to 0$. This is immediate since by~\eqref{fHK2}, we have for every $x\in B$
\begin{equation*} 
 |e^{t\Delta }u_2 (x) | \le K  \| \DI \| _{ L^\infty  } t^{-\frac {N} {2}}  \int  _{  \Omega \cap \{  |y|>R+1 \} } e^{- \frac { |x-y|^2} {\delta t}} dy.
\end{equation*} 
Since $ |x|<R$ and $ |y|> R+1$ we have $ |x-y |\ge 1$ so that
\begin{equation*} 
 |e^{t\Delta }u_2 (x) | \le K  \| \DI \| _{ L^\infty  } e^{- \frac {1} {2\delta t}} t^{-\frac {N} {2}}  \int  _{ \R^N } e^{- \frac { |x-y|^2} { 2 \delta t}} = K  \| \DI \| _{ L^\infty  } (2\pi \delta )^{\frac {N} {2}} e^{- \frac {1} {2\delta t}} .
\end{equation*} 
It follows that $  \|   e^{t\Delta }u_2  \| _{ L^\infty  (B) } \to 0$, which completes the proof.
\end{proof} 

\begin{lem} \label{eHO2} 
Let $T>0$,  $f\in L^\infty ( (0,T) \times \Omega  ) $ and set
\begin{equation} \label{fHO1} 
\Phi (t,x)= \int _0^t \int  _{ \Omega   } G_\Omega (t-s, x, y) f(s, y) \, dy \, ds
\end{equation} 
for all $0<t<T$ and $x\in \Omega  $. It follows that $\Phi (t,x)$ is well defined for all $0<t<T$ and $x\in \Omega  $ as a Lebesgue integral on $(0,t)\times \Omega $, and that $\Phi \in C((0,T)\times \Omega )$. 
In addition, there exists a constant $C$ such that $  \| \Phi  (t, \cdot )\| _{ L^\infty  } \le C t $   for all $0 < t <T$. 
\end{lem}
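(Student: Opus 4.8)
The plan is to obtain the well-definedness and the bound $\|\Phi(t,\cdot)\|_{L^\infty}\le Ct$ directly from the Gaussian upper bound~\eqref{fHK2}, and to establish continuity by a splitting argument that isolates the singular contribution near $s=t$. The key elementary fact, immediate from~\eqref{fHK2}, is that for every $\tau>0$ and $x\in\Omega$
\begin{equation*}
\int_\Omega G_\Omega(\tau,x,y)\,dy \le K\tau^{-\frac N2}\int_{\R^N}e^{-\frac{|x-y|^2}{\delta\tau}}\,dy = K(\pi\delta)^{\frac N2}=:K'.
\end{equation*}
Since $f\in L^\infty((0,T)\times\Omega)$, Tonelli's theorem together with this bound gives
\begin{equation*}
\int_0^t\!\int_\Omega G_\Omega(t-s,x,y)\,|f(s,y)|\,dy\,ds \le K'\|f\|_{L^\infty}\,t,
\end{equation*}
so the integrand in~\eqref{fHO1} is absolutely integrable on $(0,t)\times\Omega$; hence $\Phi(t,x)$ is a well-defined Lebesgue integral and $\|\Phi(t,\cdot)\|_{L^\infty}\le Ct$ with $C=K'\|f\|_{L^\infty}$. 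This settles the first and last assertions simultaneously.

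For the continuity I would fix $(t_0,x_0)\in(0,T)\times\Omega$ and, for $\varepsilon\in(0,t_0)$, split $\Phi=\Phi^{\mathrm{far}}_\varepsilon+\Phi^{\mathrm{near}}_\varepsilon$, where $\Phi^{\mathrm{near}}_\varepsilon$ integrates over $s\in(t-\varepsilon,t)$ and $\Phi^{\mathrm{far}}_\varepsilon$ over $s\in(0,t-\varepsilon)$. By the same computation, $|\Phi^{\mathrm{near}}_\varepsilon(t,x)|\le K'\|f\|_{L^\infty}\varepsilon$ uniformly for $t>\varepsilon$. For the far part, for $(t,x)$ near $(t_0,x_0)$ I would write
\begin{equation*}
\Phi^{\mathrm{far}}_\varepsilon(t,x)=\int_0^{t_0-\varepsilon}\!\int_\Omega G_\Omega(t-s,x,y)f(s,y)\,dy\,ds+\int_{t_0-\varepsilon}^{t-\varepsilon}\!\int_\Omega G_\Omega(t-s,x,y)f(s,y)\,dy\,ds,
\end{equation*}
the second (sliver) term being bounded by $K'\|f\|_{L^\infty}|t-t_0|\to0$. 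On the domain $s\in(0,t_0-\varepsilon)$ of the first term the elapsed time $t-s$ stays $\ge\varepsilon/2$ once $t>t_0-\varepsilon/2$, so the integrand converges pointwise to $G_\Omega(t_0-s,x_0,y)f(s,y)$ by the smoothness~\eqref{fHK1} and is dominated, uniformly in $(t,x)$ near $(t_0,x_0)$, by an $L^1((0,t_0-\varepsilon)\times\Omega)$ function furnished by~\eqref{fHK2}; dominated convergence then gives convergence to $\Phi^{\mathrm{far}}_\varepsilon(t_0,x_0)$. Combining the three estimates yields $\limsup_{(t,x)\to(t_0,x_0)}|\Phi(t,x)-\Phi(t_0,x_0)|\le 2K'\|f\|_{L^\infty}\varepsilon$, and letting $\varepsilon\downarrow0$ proves $\Phi\in C((0,T)\times\Omega)$.

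The two Gaussian computations are routine; the step I expect to require care is the continuity of $\Phi^{\mathrm{far}}_\varepsilon$, where one must control the moving upper limit $t-\varepsilon$ and, more importantly, produce a single dominating function valid uniformly for $(t,x)$ in a neighborhood of $(t_0,x_0)$ over the possibly unbounded domain $\Omega$. It is exactly the Gaussian tail in~\eqref{fHK2}---together with the facts that $x$ then ranges over a bounded set and $t-s$ is bounded above and below---that supplies this dominating function, so that the mere $L^\infty$ regularity of $f$ poses no obstruction.
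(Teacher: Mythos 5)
Your proposal is correct and follows essentially the same route as the paper: the well-definedness and the bound $\|\Phi(t,\cdot)\|_{L^\infty}\le Ct$ come from the Gaussian upper bound~\eqref{fHK2} and the identity~\eqref{fHO3}, and continuity is proved by cutting off the time integral near the singular endpoint $s=t$ (bounded by the Gaussian mass times the length of the cut) and applying dominated convergence on the remaining bulk, exactly as in the paper's decomposition $I_1+I_2+I_3$. The only difference is bookkeeping: the paper uses a common cutoff $\tau$ below both $t$ and $t_n$, while you use $t_0-\varepsilon$ plus a sliver term, which is equivalent.
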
 

\begin{proof} 
Let $0<t<T$ and $x\in \Omega  $. It follows from~\eqref{fHK1} that $G_\Omega (t-s, x, y) f(s, y)$ is a measurable function of $(s,y)\in (0,t) \times \Omega $. Moreover, the Gaussian bound~\eqref{fHK2} implies that
\begin{equation}  \label{fHO2} 
 | G_\Omega (t-s, x, y) f(s, y) | \le  K  \| f \| _{ L^\infty  }  (t-s) ^{-\frac {N} {2}} e^{- \frac { |x-y|^2} {\delta (t-s)}}. 
\end{equation} 
Since 
\begin{equation} \label{fHO3} 
 (t-s) ^{-\frac {N} {2}}  \int  _{ \R^N  } e^{- \frac { |x-y|^2} {\delta (t-s)}} dy = (\pi \delta  )^{\frac {N} {2}}
\end{equation} 
the right-hand side of~\eqref{fHO2}  is clearly integrable on $(0,t) \times \Omega $. Thus the integral~\eqref{fHO1} is well defined.  
To show the continuity of $\Phi $, fix $(t,x)\in (0,T)\times \Omega $ and let $(t_n ) _{ n\ge 1 }\subset (0, T) $ and $(x_n) _{ n\ge 1 }\subset \Omega $ satisfy $t_n \to t$ and $x_n \to x$ as $n\to \infty $. 
Fix $0 < h < \frac {t} {2}$, so that there exists $\tau >0$ such that
\begin{equation} \label{fHO3:1} 
\tau +h \le t_n, t \le \tau + 2h .
\end{equation} 
We have
\begin{equation}\label{fHO7} 
\begin{split} 
\Phi (t,x)-\Phi (t_n , x_n )&=\int _0^{ \tau  }  \int  _{ \Omega   } (G_\Omega (t-s, x, y)-G_\Omega (t_n -s,  x_ n, y)) f(s, y) \, dy \, ds\\
&+\int _{ \tau  } ^t  \int  _{ \Omega   } G_\Omega (t-s, x, y)\, dy \, ds\\
&- \int _\tau^{t_n } \int  _{ \Omega   } G_\Omega (t_n -s, x_n , y) f(s, y) \, dy \, ds \\ & \Eqdef I_1+I_2+I_3.
\end{split} 
\end{equation}
Applying~\eqref{fHO2} and~\eqref{fHO3}, we see that
\begin{equation} \label{fHO3:2} 
|I_2| \le K\| f \| _{ L^\infty  }\int _{ \tau } ^t \int  _{ \Omega   } (t-s) ^{-\frac {N} {2}} e^{- \frac { |x-y|^2} {\delta (t-s)}} \, dy \, ds\\
 \le 2K\| f \| _{ L^\infty  }(\pi \delta  )^{\frac {N} {2}} h .
\end{equation} 
Similarly,
\begin{equation} \label{fHO3:3} 
|I_3| \le 2K\| f \| _{ L^\infty  }(\pi \delta  )^{\frac {N} {2}} h .
\end{equation} 
Next, we show that $I_1 \to 0$ as $n\to \infty $ by dominated convergence. 
Indeed, the integrand converges pointwise to $0$ by~\eqref{fHK1}. 
Therefore, it suffices to show that the integrand is bounded by a fixed function in $L^1 ( (0,T) \times \Omega  )$. Indeed, by~\eqref{fHK2} 
\begin{equation*} 
G_\Omega (t-s, x, y) \le K (t - s) ^{-\frac {N} {2}} e^{- \frac { |x-y|^2} {\delta (t - s ) }} .
\end{equation*} 
Since $ h \le t-s \le T$, it follows that
\begin{equation*} 
G_\Omega (t-s, x, y) \le K h ^{-\frac {N} {2}} e^{- \frac { |x-y|^2} {\delta T }} 
\end{equation*} 
and, similarly,
\begin{equation*} 
G_\Omega (t_n -s, x_n , y) \le K h ^{-\frac {N} {2}} e^{- \frac { |x_n -y|^2} {\delta T }} 
\end{equation*} 
For $n$ large, $ |x_n - y|^2 \ge \frac {1} {2}  |x-y |^2- 1$ for all $y\in \R^N $, so that 
\begin{equation*} 
 |G_\Omega (t-s, x, y)-G_\Omega (t_n -s,  x_ n, y))| \le  C h ^{-\frac {N} {2}} e^{- \frac { |x-y|^2} {2 \delta T }} 
\end{equation*} 
which shows that $I_1 \to 0$ as $n\to \infty $.
Together with~\eqref{fHO3:2} and~\eqref{fHO3:3},  this implies that 
\begin{equation*} 
\limsup  _{ n\to \infty  }  | \Phi (t,x)-\Phi (t_n , x_n ) | \le 4 K\| f \| _{ L^\infty  }(\pi \delta  )^{\frac {N} {2}} h .
\end{equation*} 
The conclusion follows by letting $h\to0$.
\end{proof} 

\section{The heat equation on $\R^N $} \label{sLHERN} 

We let
\begin{equation} 
K(t, x) =  (4\pi t)^{- \frac {N} {2}} e^{- \frac { |x|^2} {4t}}, \quad t>0, x\in \R^N 
\end{equation} 
so that $G (t, x,y) = K (t, x-y)$. 

\begin{lem} \label{eHRN1} 
Let $\DI \in L^\infty  (\R^N ) $ and set $u(t)= e^{t \Delta }\DI $ for $t\ge 0$. 
It follows that $u \in  C((0, \infty ), \Cbu )$. 
\end{lem}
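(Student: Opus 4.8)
The plan is to read Lemma~\ref{eHRN1} as the standard smoothing-plus-strong-continuity statement for the heat semigroup, and to prove it in three steps: first that for each fixed $t>0$ the function $u(t)$ lies in $\Cbu$, then that $(e^{t\Delta })$ is strongly continuous on $\Cbu$ as $t\downarrow 0$, and finally to propagate this to continuity on all of $(0,\infty )$ using the contraction property together with the semigroup law. For the first step I would fix $t>0$ and write $u(t,x)=\int _{\R^N }K(t,x-y)\DI (y)\,dy$. Since $K(t,\cdot )\ge 0$ and $\int _{\R^N }K(t,z)\,dz=1$, one gets immediately $\| u(t)\| _{L^\infty }\le \| \DI \| _{L^\infty }$. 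For uniform continuity, a change of variable gives $|u(t,x+h)-u(t,x)|\le \| \DI \| _{L^\infty }\,\| K(t,\cdot +h)-K(t,\cdot )\| _{L^1 }$ for all $x,h\in \R^N $; bounding the $L^1$ modulus of translation of the smooth, integrable function $K(t,\cdot )$ by $|h|\,\| \nabla K(t,\cdot )\| _{L^1 }$ and using $\| \nabla K(t,\cdot )\| _{L^1 }\le Ct^{-\frac {1} {2}}$, one obtains $|u(t,x+h)-u(t,x)|\le Ct^{-\frac {1} {2}}\| \DI \| _{L^\infty }\,|h|$, a Lipschitz modulus independent of $x$. Hence $u(t)\in \Cbu $.

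For the second step I would prove that $\| e^{\sigma \Delta }v-v\| _{L^\infty }\to 0$ as $\sigma \downarrow 0$ for every $v\in \Cbu $. Using $\int _{\R^N }K(\sigma ,z)\,dz=1$, one writes $e^{\sigma \Delta }v(x)-v(x)=\int _{\R^N }K(\sigma ,z)\,(v(x-z)-v(x))\,dz$ and splits the integral over $\{ |z|<\rho \} $ and $\{ |z|\ge \rho \} $. On the first region the integrand is controlled, uniformly in $x$, by the modulus of uniform continuity of $v$ at scale $\rho $; on the second it is bounded by $2\| v\| _{L^\infty }\int _{\{ |z|\ge \rho \} }K(\sigma ,z)\,dz$, which tends to $0$ as $\sigma \downarrow 0$ by the scaling $z=\sqrt \sigma \,w$. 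Choosing first $\rho $ small and then $\sigma $ small yields the claim, with an estimate uniform in $x$.

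Finally I would propagate continuity from $\sigma =0$ to all of $[0,\infty )$. Since $e^{\sigma \Delta }$ is a contraction on $L^\infty (\R^N )$ (again from $K\ge 0$ and $\int _{\R^N }K=1$) and the kernels satisfy the Chapman--Kolmogorov identity, hence the semigroup law $e^{(\sigma +\sigma ')\Delta }=e^{\sigma \Delta }e^{\sigma '\Delta }$ on bounded data, strong continuity at $0$ gives continuity at every $\sigma _0\ge 0$: for $\sigma \ge \sigma _0$ one estimates $\| e^{\sigma \Delta }v-e^{\sigma _0\Delta }v\| _{L^\infty }=\| e^{\sigma _0\Delta }(e^{(\sigma -\sigma _0)\Delta }v-v)\| _{L^\infty }\le \| e^{(\sigma -\sigma _0)\Delta }v-v\| _{L^\infty }$, and symmetrically for $\sigma <\sigma _0$. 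To conclude at a given $t_0>0$, set $t_1=t_0/2$; by the first step $v:=u(t_1)\in \Cbu $, and the semigroup law gives $u(t)=e^{(t-t_1)\Delta }v$ for $t\ge t_1$, so $t\mapsto u(t)$ is continuous on $[t_1,\infty )$, in particular at $t_0$. Since $t_0>0$ is arbitrary, $u\in C((0,\infty ),\Cbu )$.

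The point requiring the most care — and the reason the smoothing step must precede the continuity argument — is that $\DI $ is merely bounded and need not be uniformly continuous, so the strong-continuity argument of the second step cannot be applied directly with $v=\DI $; one must first use the smoothing to produce a $\Cbu $ datum $u(t_1)$, and only then invoke strong continuity. The genuine analytic content lies in the uniform-in-$x$ control of the moduli in the first two steps; the propagation in the third step is routine contraction-semigroup theory.
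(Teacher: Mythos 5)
Your proof is correct, but it takes a genuinely different route from the paper. The paper's argument is a one-liner at the level of the kernel: writing $u(t)=K(t,\cdot)\star \DI$, it invokes the continuity of the map $t\mapsto K(t,\cdot)$ from $(0,\infty)$ into $W^{1,1}(\R^N)$, so that Young's inequality gives $u\in C((0,\infty),W^{1,\infty}(\R^N))\hookrightarrow C((0,\infty),\Cbu)$ directly — time-continuity is transferred from the kernel to $u$ in a single convolution estimate. You instead use the semigroup structure: smoothing at one fixed time (your Lipschitz bound $Ct^{-1/2}\|\DI\|_{L^\infty}|h|$, which is exactly the quantitative content of the paper's $W^{1,1}$ bound on $\nabla K$), then strong continuity of $(e^{\sigma\Delta})$ on $\Cbu$ at $\sigma=0$, then propagation by the contraction property and the semigroup law. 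Both are complete; yours is longer but proves along the way the strong continuity of the heat semigroup on $\Cbu$, which in the paper is the content of the \emph{subsequent} Lemma~\ref{eHRN1:b1} (whose proof in turn cites Lemma~\ref{eHRN1}), so the total work is comparable — you have simply reordered the logical dependencies. The paper's route yields slightly more, namely continuity of $t\mapsto \nabla u(t)$ in $L^\infty$ as well. Your closing remark correctly identifies why the conclusion is only on $(0,\infty)$: since $\DI$ is merely bounded, strong continuity cannot be applied at $t=0$, and the smoothing step must come first.
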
 

\begin{proof} 
Note that $u (t) = K(t, \cdot ) \star \DI$. Since $K\in C((0,\infty ), W^{1, 1} (\R^N ) )$, we see that $u\in  C((0,\infty ), W^{1, \infty } (\R^N ) )$. Hence the result, since $W^{1, \infty } (\R^N ) \hookrightarrow \Cbu $.
\end{proof} 

\begin{lem} \label{eHRN1:b1} 
The heat semigroup $(e^{t\Delta }) _{ t\ge 0 }$ is a $C_0$ semigroup of contractions on $\Cbu$. 
In addition, $(e^{t\Delta }) _{ t\ge 0 }$ is an analytic semigroup on $\Cbu$.
Moreover, both statements are true if $\Cbu$ is replaced by $\Cz$.
\end{lem}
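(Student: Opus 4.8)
The plan is to handle the three assertions---contraction $C_0$ semigroup, analyticity, and the passage to $\Cz$---in turn, using throughout the representation $e^{t\Delta}\DI=K(t,\cdot)\star\DI$ together with the elementary properties of the Gaussian kernel $K$.

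First I would record that $K(t,\cdot)\ge 0$, that $\|K(t,\cdot)\|_{L^1}=1$, and that $K(t,\cdot)\star K(s,\cdot)=K(t+s,\cdot)$. The first two facts give $\|e^{t\Delta}\DI\|_{L^\infty}\le\|\DI\|_{L^\infty}$ (contraction), the third gives the semigroup property, and Lemma~\ref{eHRN1} already shows that $e^{t\Delta}$ maps $\Cbu$ into $\Cbu$. For strong continuity I would write $e^{t\Delta}\DI(x)-\DI(x)=\int_{\R^N}K(t,y)[\DI(x-y)-\DI(x)]\,dy$ and split the integral over $\{|y|<\eta\}$ and $\{|y|\ge\eta\}$. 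The uniform continuity of $\DI$ bounds the first piece by $\EPS$ uniformly in $x$ once $\eta$ is small, while the scaling identity $\int_{\{|y|\ge\eta\}}K(t,y)\,dy=\int_{\{|z|\ge\eta/\sqrt t\}}K(1,z)\,dz\goto 0$ as $t\downarrow 0$ controls the second; hence $\|e^{t\Delta}\DI-\DI\|_{L^\infty}\to0$.

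The main work is analyticity. I would extend the semigroup to complex times by setting $e^{t\Delta}\DI=K(t,\cdot)\star\DI$ with $K(t,x)=(4\pi t)^{-N/2}e^{-|x|^2/(4t)}$, using the principal branch of $t^{-N/2}$ on the sector $\Sigma_\theta=\{t\in\C\setminus\{0\}:|\arg t|<\theta\}$ for any fixed $\theta<\frac{\pi}{2}$. Writing $t=re^{i\phi}$ one has $\Re(1/t)=r^{-1}\cos\phi$, and a Gaussian computation yields the uniform bound $\|K(t,\cdot)\|_{L^1}=(\cos\phi)^{-N/2}\le(\cos\theta)^{-N/2}$ on $\Sigma_\theta$. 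To obtain holomorphy of $t\mapsto K(t,\cdot)$ as an $L^1$-valued map I would combine this local boundedness with weak holomorphy: for each $g\in L^\infty=(L^1)^*$ the scalar function $t\mapsto\int_{\R^N}K(t,x)g(x)\,dx$ is holomorphic on $\Sigma_\theta$ by differentiation under the integral sign, legitimate because the integrand is holomorphic in $t$ and dominated, uniformly on compact subsectors, by an integrable function; and a locally bounded, weakly holomorphic Banach-space-valued map is holomorphic. Composing with the bounded linear map $f\mapsto f\star\DI$ from $L^1$ into $\Cbu$ then shows that $t\mapsto e^{t\Delta}\DI$ is holomorphic on $\Sigma_\theta$ with values in $\Cbu$, bounded there by $(\cos\theta)^{-N/2}\|\DI\|_{L^\infty}$; the semigroup identity persists on $\Sigma_\theta$ by analytic continuation from the real axis. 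Since $\theta<\frac{\pi}{2}$ is arbitrary, $(e^{t\Delta})_{t\ge0}$ is a bounded analytic semigroup (of angle $\frac{\pi}{2}$) on $\Cbu$.

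Finally, for $\Cz$ I would check that it is an invariant closed subspace. If $\DI\in C^\infty_\Comp(\R^N)$ is supported in $\{|z|\le R\}$, then $|e^{t\Delta}\DI(x)|\le\|\DI\|_{L^\infty}\int_{\{|y|\ge|x|-R\}}K(t,y)\,dy\goto 0$ as $|x|\to\infty$, so $e^{t\Delta}\DI\in\Cz$; by density of $C^\infty_\Comp(\R^N)$ in $\Cz$ and the contraction bound, $e^{t\Delta}(\Cz)\subset\Cz$. All the properties established on $\Cbu$ then restrict to the closed subspace $\Cz$, which gives the last assertion. The only delicate point in the whole argument is the vector-valued holomorphy in the analyticity step; everything else reduces to a direct computation with the Gaussian kernel.
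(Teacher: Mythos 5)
Your proof is correct, and two of its three parts follow the paper's own argument: the strong continuity is proved by exactly the same split of $\int K(t,y)\,[\DI(x-y)-\DI(x)]\,dy$ over $\{|y|<\eta\}$ and $\{|y|\ge\eta\}$, and the passage to $\Cz$ is, as in the paper, just restriction to a closed invariant subspace (you verify the invariance by density of compactly supported functions, which the paper leaves implicit). Where you genuinely diverge is the analyticity step. The paper stays entirely on the real axis: it invokes the criterion from Yosida that it suffices to have $\sup_{0<t\le 1}\|t\Delta e^{t\Delta}\|_{\mathcal L(\Cbu)}<\infty$, and reduces this to the elementary bound $\sup_{0<t\le 1}\|t\Delta K(t,\cdot)\|_{L^1}<\infty$. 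You instead build the holomorphic extension directly: the complex Gaussian kernel on a sector of half-angle $\theta<\frac\pi2$, the computation $\|K(re^{i\phi},\cdot)\|_{L^1}=(\cos\phi)^{-N/2}$, and the upgrade from weak holomorphy plus local boundedness to $L^1$-valued holomorphy, followed by composition with $f\mapsto f\star\DI$. Both routes are sound. The paper's is shorter and purely real-variable, at the price of outsourcing the implication to a cited theorem; yours is self-contained modulo the standard weak-to-strong holomorphy lemma and gives a little more (a bounded extension to every proper subsector). One small point worth making explicit in your version: what the paper actually consumes downstream (in Lemma~\ref{eKTS1}) is the derivative bound $\|t\Delta e^{t\Delta}\|\le C$, equivalently $\|(\lambda-\Delta)e^{-t(\lambda-\Delta)}\|\le C/t$; from your sectorial extension this follows by a Cauchy estimate for $\frac{d}{dt}e^{t\Delta}=\Delta e^{t\Delta}$ on a disc of radius proportional to $t$ contained in the sector, a one-line remark that closes the loop between your definition of analyticity and the one the paper uses.
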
 

\begin{proof} 
We first prove that $(e^{t\Delta }) _{ t\ge 0 }$ is a $C_0$ semigroup of contractions on $\Cbu$. 
Let $\DI \in \Cbu$ and set $u(t)= e^{t\Delta } \DI$. By Lemma~\ref{eHRN1} it suffices to show that $  \| u(t) - \DI \| _{ L^\infty  } \to 0$ as $t\to 0$. Note that 
\begin{equation*} 
u(t,x) - \DI (x)   = \int  _{ \R^N  } K(t, x-y ) \DI (y) \, dy - \DI (x) \int  _{ \R^N  } K(t, x-y )  \, dy 
\end{equation*} 
so that
\begin{equation} 
\begin{split} 
 |u (t, x) - \DI (x)| \le &  \int  _{ \{  |x-y|<\delta  \} } K(t, x-y ) |\DI (y) - \DI (x)| \, dy \\
& +  \int  _{ \{  |x-y|>\delta  \} } K(t, x-y ) |\DI (y) - \DI (x)| \, dy \\ & = I_1 + I_2 .
\end{split} 
\end{equation} 
Let $\varepsilon >0$. There exists $\delta >0$ such that $ | \DI (x)- \DI (y) |\le \frac {\varepsilon } {2}$ for $ |x-y|\le \delta $. Therefore,
\begin{equation} 
I_1 \le \frac {\varepsilon } {2} \int  _{ \R^N  } K(t, y )\, dy = \frac {\varepsilon } {2} .
\end{equation} 
Next, 
\begin{equation} 
I_2 \le 2  \| \DI \| _{ L^\infty  } \int  _{ \{  |y|>\delta  \} } K(t, y ) \, dy \goto _{ t\to 0 }0
\end{equation} 
Hence the result. 

To prove analyticity on $\Cbu$, we observe that by~\cite[Chapter~IX, Section~10]{Yosida}, it suffices to show that 
\begin{equation*} 
\sup  _{ 0<t\le 1 }  \| t \Delta e^{t \Delta } \| _{ \mathcal L (\Cbu) } <\infty .
\end{equation*} 
Since $ e^{t \Delta } \DI = K(t, \cdot ) \star \DI$, we see that $t \Delta e^{t \Delta } \DI = (t \Delta  K(t, \cdot ) )\star \DI$. Therefore, it suffices to prove that
\begin{equation*} 
\sup  _{ 0<t\le 1 }  \| t \Delta  K(t, \cdot )  \| _{ L^1} <\infty 
\end{equation*} 
which follows from elementary calculations. 

Finally, since $\Cz $ is a closed subspace of $\Cbu$, which is invariant under the action of $e^{t \Delta }$, 
the corresponding statements for $\Cz$ are an immediate consequence.
\end{proof} 

\begin{lem} \label{eHRN2}
Let $T>0$,  $f\in L^\infty ( (0,T) \times \R^N ) $ and set
\begin{equation} 
\Phi (t,x)= \int _0^t \int  _{ \R^N  } K(t-s, x-y ) f(s, y) \, dy \, ds
\end{equation} 
for all $0<t<T$ and $x\in \R^N $. It follows that $\Phi \in C([0, T], \Cbu )$. 
Moreover, given any $\tau >0$, 
\begin{equation} \label{eHRN2:1}
e^{\tau \Delta } \Phi (t, x)= \int _0^t \int  _{ \R^N  } K(\tau + t-s, x-y ) f(s, y) \, dy \, ds
\end{equation} 

\end{lem}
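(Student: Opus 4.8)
The plan is to exploit the explicit convolution structure $\Phi(t)=\int_0^t K(t-s,\cdot)\star f(s)\,ds$ together with the $L^1$ bounds on the heat kernel and its gradient, rather than joint continuity alone. First I would record the elementary kernel estimates $\|K(\sigma,\cdot)\|_{L^1}=1$, $\|\nabla K(\sigma,\cdot)\|_{L^1}\le c\,\sigma^{-1/2}$, and the first-moment bound $\int_{\R^N}K(\sigma,y)|y|\,dy\le c'\,\sigma^{1/2}$, all obtained by the change of variables $y=2\sqrt\sigma\,z$. Passing the spatial gradient onto the kernel gives $\nabla_x\Phi(t)=\int_0^t(\nabla K(t-s,\cdot))\star f(s)\,ds$, so that for $0<t\le T$,
\[
\|\Phi(t)\|_{L^\infty}\le t\,\|f\|_{L^\infty},\qquad \|\nabla\Phi(t)\|_{L^\infty}\le c\,\|f\|_{L^\infty}\int_0^t(t-s)^{-1/2}\,ds=2c\,\|f\|_{L^\infty}\,t^{1/2}.
\]
In particular each $\Phi(t)$ lies in $W^{1,\infty}(\R^N)\hookrightarrow\Cbu$, with Lipschitz constants bounded uniformly by $M:=2c\,\|f\|_{L^\infty}T^{1/2}$ on $[0,T]$.

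Next I would prove the semigroup identity~\eqref{eHRN2:1}, since it is what organizes the continuity argument. Writing $e^{\tau\Delta}\Phi(t)(x)=\int_{\R^N}K(\tau,x-z)\Phi(t,z)\,dz$ and inserting the definition of $\Phi$, the triple integral in $(z,s,y)$ is absolutely convergent: bounding $|f|$ by $\|f\|_{L^\infty}$ and integrating out first $y$ and then $z$ against the $L^1$-normalized Gaussians leaves $\|f\|_{L^\infty}\,t<\infty$. Fubini's theorem then permits interchanging the $z$-integration with the $(s,y)$-integration, and the Chapman--Kolmogorov identity $\int_{\R^N}K(\tau,x-z)K(t-s,z-y)\,dz=K(\tau+t-s,x-y)$ collapses the inner integral to the stated right-hand side.

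With~\eqref{eHRN2:1} in hand I would establish $t$-continuity into $\Cbu$. For $t'<t$, the identity with $\tau=t-t'$ gives $e^{(t-t')\Delta}\Phi(t')=\int_0^{t'}e^{(t-s)\Delta}f(s)\,ds$, whence
\[
\Phi(t)-\Phi(t')=\int_{t'}^t e^{(t-s)\Delta}f(s)\,ds+\bigl[e^{(t-t')\Delta}-I\bigr]\Phi(t').
\]
The first term has sup norm at most $(t-t')\|f\|_{L^\infty}$, while the second is controlled by the uniform Lipschitz bound via $\|[e^{\sigma\Delta}-I]g\|_{L^\infty}\le\|\nabla g\|_{L^\infty}\int_{\R^N}K(\sigma,y)|y|\,dy\le c'M\,\sigma^{1/2}$ with $\sigma=t-t'$ and $g=\Phi(t')$; the symmetric decomposition handles $t'>t$. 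Both bounds tend to $0$, and since $\Phi(0)=0$ with $\|\Phi(t)\|_{L^\infty}\le t\,\|f\|_{L^\infty}$, continuity holds also at $t=0$. This proves $\Phi\in C([0,T],\Cbu)$.

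I expect the main obstacle to be the $t$-continuity step, because the naive decomposition leaves a term $[e^{\sigma\Delta}-I]\Phi(t')$ that one is tempted to control by strong continuity of the semigroup, which would make the estimate circular, since the modulus of continuity of $\Phi(t')$ is precisely what we are trying to bound. The device that breaks the circularity is the uniform-in-$t$ gradient estimate from the first step together with the first-moment bound on $K$, which quantifies the action of $[e^{\sigma\Delta}-I]$ on uniformly Lipschitz functions as $O(\sigma^{1/2})$. By contrast, the identity~\eqref{eHRN2:1} and the $\Cbu$ membership of each $\Phi(t)$ are comparatively routine, resting only on Fubini's theorem and the Chapman--Kolmogorov relation.
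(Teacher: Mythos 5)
Your proof is correct, but it takes a genuinely different route from the paper's. The paper's argument is a space--time convolution trick: extending $K$ and $f$ by zero to $\R\times\R^N$, one has $\Phi=\widetilde{K}\,\widetilde{\star}\,\widetilde{f}$ with $\widetilde{K}\in L^1(\R\times\R^N)$ and $\widetilde{f}\in L^\infty(\R\times\R^N)$, and the general fact that an $L^1$--$L^\infty$ convolution is bounded and uniformly continuous gives $\Phi\in C_{\mathrm{b,u}}(\R\times\R^N)\subset C(\R,\Cbu)$ in one stroke; the identity~\eqref{eHRN2:1} is then read off from $K(\tau,\cdot)\star K(t-s,\cdot)=K(\tau+t-s,\cdot)$, essentially your Fubini/Chapman--Kolmogorov step. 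You instead decouple the two variables: uniform spatial regularity comes from $\|\nabla K(\sigma,\cdot)\|_{L^1}\le C\sigma^{-1/2}$, giving a Lipschitz bound on $\Phi(t)$ uniform in $t$, and temporal continuity comes from the semigroup identity plus the first-moment bound on $K$, which correctly quantifies the action of $e^{\sigma\Delta}-I$ on uniformly Lipschitz functions as $O(\sigma^{1/2})$ and does break the circularity you flag. Your version is longer but more quantitative --- it yields H\"older-$\frac12$ continuity of $t\mapsto\Phi(t)$ into $\Cbu$ and a uniform $W^{1,\infty}$ bound, neither of which the paper's argument exhibits --- whereas the paper's is shorter and needs no kernel derivative estimates. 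The only places where you are slightly informal (interchanging $\nabla_x$ with the $s$-integral, and the measurability needed for Fubini) are routine given the absolute convergence and the integrability of $(t-s)^{-1/2}$ that you establish.
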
 

\begin{proof} 
We define 
\begin{equation*} 
 \widetilde{K} (t,x) = 
 \begin{cases} 
 K(t, x) & x\in \R^N , 0< t <T \\ 0 & x\in \R^N ,  t(T-t) <0
 \end{cases}  
\end{equation*} 
so that $ \widetilde{K} \in L^1 (\R \times \R^N ) $ and
\begin{equation*} 
 \widetilde{f} (t,x) =
 \begin{cases} 
 f(t, x) & x\in \R^N , 0<t<T \\  0 & x\in \R^N , t(T-t) <0
 \end{cases}  
\end{equation*} 
so that $ \widetilde{f} \in   L^\infty  (\R \times \R^N )$. We may write
\begin{equation*} 
\Phi (t,x)= \int  _{ \R\times \R^N  }  \widetilde{K } (t-s, x-y)  \widetilde{f} (s,y) \,dy\,ds 
\end{equation*} 
which means that $\Phi =  \widetilde{K}  \widetilde{\star }  \widetilde{f}  $, where $ \widetilde{\star} $ is the convolution on $\R \times \R^N $. Since $ \widetilde{K} \in L^1$ and $ \widetilde{f} \in L^\infty $, we have $\widetilde{K}  \widetilde{\star }  \widetilde{f}   \in C _{ \mathrm {b,u} } (\R \times \R^N ) \subset C(\R, \Cbu )$. This proves the first part of the result.
Identity~\eqref{eHRN2:1} easily follows from the above considerations and the fact that if $\tau >0$ and $0<s<t$, then $K (\tau , \cdot ) \star K( t-s, \cdot )= K( \tau +t-s, \cdot )$.
\end{proof} 

\section{Regularity for the nonlinear heat equation} \label{sRHE} 

We begin with the case of the heat equation set on $\R^N $.

\begin{thm} \label{eRHE0} 
Let $X$  be either $ \Cz$ or $\Cbu$. 
Let $\alpha >0$, $\DI \in  X$, $T>0$, and suppose $u\in C( [0,T], X)$ satisfies
\begin{equation}  \label{eRHE0:1} 
u(t)= e^{t \Delta } \DI +  \int _0^t e^{ (t -s) \Delta }  |u(s)|^\alpha u(s)  \, ds
\end{equation} 
for all $0\le t\le T$. It follows that $u \in C^1 ( (0,T), X)$, $\Delta u \in C ( (0,T), X)$, and $u_t= \Delta u+  |u|^\alpha u$ for all $0<t<T$.
In addition, $\nabla u \in C^1 ( (0,T ), X)$, $\Delta \nabla u\in C ( (0,T ), X)$, and all space derivatives of $u$ of order  two are in $C((0,T), X)$.
\end{thm}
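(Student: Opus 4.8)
The plan is to read the integral equation as an abstract semilinear problem governed by the analytic semigroup $(e^{t\Delta})_{t\ge0}$ on $X$ provided by Lemma~\ref{eHRN1:b1}, and to bootstrap regularity, first in time and then in space. Write $f(t)=|u(t)|^\alpha u(t)$ and $v(t)=\int_0^t e^{(t-s)\Delta}f(s)\,ds$, so that $u=e^{t\Delta}\DI+v$ on $[0,T]$. Since $z\mapsto|z|^\alpha z$ is bounded and Lipschitz on bounded subsets of $\R$ and $u\in C([0,T],X)$, the associated Nemytskii map shows $f\in C([0,T],X)$; in particular $f$ is bounded on $[0,T]$.

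First I would upgrade $f$ to a locally H\"older continuous map on $(0,T]$, which is exactly the regularity needed to invoke the classical strong-solution theory for analytic semigroups. The mechanism is the smoothing bound $\|\Delta e^{\sigma\Delta}\|_{\mathcal L(X)}\le C/\sigma$ coming from analyticity (Lemma~\ref{eHRN1:b1}), which yields $\|(e^{h\Delta}-I)e^{r\Delta}\|_{\mathcal L(X)}\le C\min\{1,h/r\}\le C(h/r)^\theta$ for every $\theta\in(0,1)$. Applying this with $r=t$ gives local H\"older continuity of $t\mapsto e^{t\Delta}\DI$ on $(0,T]$, and splitting $v(t+h)-v(t)$ into the contribution of $[t,t+h]$ (bounded by $h\|f\|_{L^\infty}$) and that of $[0,t]$ (bounded by $C h^\theta\|f\|_{L^\infty}\int_0^t(t-s)^{-\theta}\,ds$) gives $v\in C^\theta([0,T],X)$. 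Hence $u$ is locally H\"older in time, and since $z\mapsto|z|^\alpha z$ is Lipschitz on the bounded range of $u$, so is $f$. Now the standard theorem for analytic semigroups applies: a locally H\"older source $f$ makes $v\in C^1((0,T),X)$ with $v(t)\in D(\Delta)$, $\Delta v\in C((0,T),X)$ and $v'=\Delta v+f$; together with the analyticity of $t\mapsto e^{t\Delta}\DI$ on $(0,T)$ this gives $u\in C^1((0,T),X)$, $\Delta u\in C((0,T),X)$, and $u_t=\Delta u+|u|^\alpha u$, which is the first assertion.

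For the second assertion I would bootstrap by differentiating the equation once in space. Fix $0<\tau<T$. Since $u(\tau)\in D(\Delta)$ and $D(\Delta)\hookrightarrow C^1$ in both $\Cz$ and $\Cbu$, we have $\nabla u(\tau)\in X$; likewise $\nabla f(s)=(\alpha+1)|u(s)|^\alpha\nabla u(s)=:g(s)\in X$ for $s\in(\tau,T)$. Using that $\nabla$ commutes with $e^{\sigma\Delta}$ on functions whose distributional gradient lies in $X$, differentiating $u(t)=e^{(t-\tau)\Delta}u(\tau)+\int_\tau^t e^{(t-s)\Delta}f(s)\,ds$ gives
\[
\nabla u(t)=e^{(t-\tau)\Delta}\nabla u(\tau)+\int_\tau^t e^{(t-s)\Delta}g(s)\,ds ,
\]
an integral equation of exactly the same type, with datum $\nabla u(\tau)\in X$ and source $g\in C((\tau,T),X)$. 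Repeating the H\"older estimate of the previous paragraph shows $\nabla u$ is locally H\"older in time; as $|u|^\alpha$ is bounded and locally H\"older, so is $g=(\alpha+1)|u|^\alpha\nabla u$. A second application of the analytic-semigroup regularity theorem then gives $\nabla u\in C^1((\tau,T),X)$ and $\nabla u(t)\in D(\Delta)$ with $\Delta\nabla u=\nabla\Delta u\in C((\tau,T),X)$; since $\tau$ is arbitrary this holds on $(0,T)$. Finally $\nabla u\in C((0,T),D(\Delta))$ together with $D(\Delta)\hookrightarrow C^1$ shows every second-order space derivative $\partial_i\partial_j u$ lies in $C((0,T),X)$.

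The hard part is the H\"older-in-time step: the classical theorem producing $v(t)\in D(\Delta)$ with $v'=\Delta v+f$ genuinely requires $f$ to be locally H\"older rather than merely continuous, so one cannot pass directly from $f\in C([0,T],X)$ to the differentiability statements. Analyticity of the heat semigroup on $X$ (Lemma~\ref{eHRN1:b1}) is precisely what supplies this, and, via the commutation $\nabla e^{\sigma\Delta}=e^{\sigma\Delta}\nabla$ together with the embedding $D(\Delta)\hookrightarrow C^1$, what lets the same argument be iterated to control the spatial derivatives.
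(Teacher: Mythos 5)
Your proof is correct and follows essentially the same route as the paper: the same decomposition of $u$ into $e^{t\Delta}\DI$ plus the Duhamel term, a H\"older-in-time bootstrap feeding into the classical analytic-semigroup regularity theorem (the paper cites Kato), and then spatial differentiation of the integral equation combined with the embedding $D(\Delta)\hookrightarrow C^1$ (the paper's Lemma~\ref{eKTS2}) to reach the gradient and second-derivative statements. The only deviation is at the sub-lemma level: you get the H\"older continuity of the Duhamel term from the elementary smoothing bound $\|(e^{h\Delta}-I)e^{r\Delta}\|_{\mathcal L(X)}\le C\min\{1,h/r\}\le C(h/r)^\theta$, whereas the paper's Lemma~\ref{eRHE1} proves the same estimate via fractional powers $(\lambda-A)^{\pm\gamma}$ (Lemma~\ref{eKTS1}).
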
 

We use the following lemma.

\begin{lem} \label{eRHE1} 
Let $e^{tA}$ be a $C_0 $ semigroup of contractions on a Banach space $X$, which is also an analytic semigroup. 
Suppose $f\in C([0,T], X) $ and let 
\begin{equation*} 
v(t)= \int _0^t e^{ (t -s) A } f(s) \, ds
\end{equation*} 
for all $0\le t\le T$.
Given any $0<\gamma <1$, the map $v:[0 ,T] \to X$ is H\"older continuous of exponent $\gamma $.
\end{lem}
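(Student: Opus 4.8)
The plan is to exploit the two defining features of the semigroup separately: the contraction bound $\| e^{tA} \| _{ \Lt (X) } \le 1$, which controls the short-time ``diagonal'' contribution, and the analytic smoothing estimate $\| A e^{tA} \| _{ \Lt (X) } \le C/t$ for $t>0$, which is what lets us differentiate the semigroup and thereby compare $e^{(t+h-s)A}$ with $e^{(t-s)A}$. Write $\| f \| _{ \infty } = \sup _{ 0\le s\le T } \| f(s) \|$. Fix $0\le t < t+h \le T$ and split the difference as
\begin{equation*}
v(t+h) - v(t) = \int _0^t \bigl[ e^{(t+h-s)A} - e^{(t-s)A} \bigr] f(s) \, ds + \int _t^{t+h} e^{(t+h-s)A} f(s)\, ds .
\end{equation*}
The last integral is the easy one: by the contraction bound its norm is at most $\| f \| _{ \infty } h$, so it is even Lipschitz in $h$.

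For the first integral I would use the fundamental theorem of calculus in the form
\begin{equation*}
e^{(t+h-s)A} - e^{(t-s)A} = \int _{ t-s }^{ t+h-s } A e^{\sigma A}\, d\sigma ,
\end{equation*}
which is justified for $s<t$ because then all the exponents are positive, $e^{\sigma A}x \in D(A)$ with $\frac{d}{d\sigma} e^{\sigma A} x = A e^{\sigma A} x$, and $\sigma \mapsto A e^{\sigma A}$ is continuous into $\Lt(X)$ by analyticity. Applying $\| A e^{\sigma A} \| \le C/\sigma$ and integrating gives the pointwise bound $\| e^{(t+h-s)A} - e^{(t-s)A} \| \le C \ln \frac { t+h-s }{ t-s }$. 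Hence the norm of the first integral is controlled by $C \| f \| _{ \infty } \int _0^t \ln \frac { t+h-s }{ t-s }\, ds$, and the substitution $r = t-s$ turns this into $C \| f \| _{ \infty } \int _0^t \ln \frac { r+h }{ r }\, dr$.

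The crux is to estimate $\int _0^t \ln \frac { r+h }{ r }\, dr$. Rescaling by $r = h\rho$ gives $h \int _0^{ t/h } \ln \frac { \rho +1 }{ \rho }\, d\rho$; the integrand is integrable at $\rho = 0$ and behaves like $1/\rho$ at infinity, so the integral is at most $C + \ln(t/h)$, yielding $\int _0^t \ln \frac { r+h }{ r }\, dr \le C h (1 + |\ln h|)$ for small $h$. Combining the two pieces gives
\begin{equation*}
\| v(t+h) - v(t) \| \le C \| f \| _{ \infty } \, h \, ( 1 + |\ln h| ) .
\end{equation*}
This logarithmic factor is exactly why the conclusion is H\"older rather than Lipschitz, and I expect it to be the only real obstruction. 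It is harmless, though: for any fixed $\gamma \in (0,1)$ one has $h(1 + |\ln h|) \le C _\gamma h^\gamma$ on $[0,T]$, since $h^{1-\gamma}(1+|\ln h|) \to 0$ as $h\to 0$ and the left-hand side stays bounded for $h$ bounded away from $0$. This gives $\| v(t+h) - v(t) \| \le C _\gamma \| f \| _{ \infty } h^\gamma$, which is H\"older continuity of exponent $\gamma$; the case $h<0$ follows by interchanging the roles of $t$ and $t+h$, and the remaining verifications are a routine use of the analytic smoothing estimate.
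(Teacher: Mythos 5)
Your proof is correct, and it takes a genuinely different route from the paper's. The splitting itself coincides with the paper's (the paper writes the two pieces as $(e^{\tau A}-I)\int_0^t e^{(t-s)A}f(s)\,ds$ and $\int_0^\tau e^{sA}f(t+\tau-s)\,ds$, which after a change of variable are exactly your two integrals), and both arguments dispose of the near-diagonal piece with the contraction bound. The difference lies in the main term: the paper inserts fractional powers, writing the first piece as $(e^{\tau A}-I)(I-A)^{-\gamma}\int_0^t(I-A)^{\gamma}e^{(t-s)A}f(s)\,ds$ and invoking the pair of estimates $\|[e^{\tau A}-I](\lambda-A)^{-\gamma}\|\le C\tau^{\gamma}$ and $\|(\lambda-A)^{\gamma}e^{-t(\lambda-A)}\|\le Ct^{-\gamma}$ from its Lemma~\ref{eKTS1}, which yields the bound $C\tau^{\gamma}t^{1-\gamma}\|f\|_{L^\infty((0,T),X)}$ for the fixed exponent $\gamma$. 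You instead differentiate the semigroup under the integral and use only the first-order smoothing bound $\|Ae^{\sigma A}\|_{\mathcal L(X)}\le C/\sigma$ (uniform on bounded time intervals for an analytic contraction semigroup), reducing everything to the elementary logarithmic estimate $\int_0^t\ln\frac{r+h}{r}\,dr\le Ch(1+|\ln h|)$. Your version is more elementary, since it avoids fractional powers altogether, and it is quantitatively sharper: the log-Lipschitz modulus $h(1+|\ln h|)$ delivers every H\"older exponent $\gamma<1$ in one stroke, whereas the paper's computation must be run separately for each $\gamma$ (its extra factor $t^{1-\gamma}$, vanishing at $t=0$, is not needed for the statement). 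The steps you leave as routine are indeed routine: the fundamental theorem of calculus applies because $t-s>0$ throughout the first integral, and $h^{1-\gamma}(1+|\ln h|)$ is bounded on $(0,T]$.
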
 

Lemma~\ref{eRHE1} is proved in a slightly different context in~\cite[Proposition~1.2]{Weissler1}. For completeness, we give the proof here.
We use the following estimates which involve the $\Gamma $ function 
\begin{equation*} 
\Gamma (\gamma )= \int _0^\infty  s^{\gamma -1} e^{- s} \, ds.
\end{equation*} 
(See the proof of~\cite[Theorem~11.3]{Komatsu} for~\eqref{fKTS1},  and~\cite[Theorem~12.1]{Komatsu} for~\eqref{fKTS3})

\begin{lem} \label{eKTS1} 
Let $(e^{tA}) _{ t\ge 0 }$ be a $C_0 $ semigroup of contractions on a Banach space $X$. 
Let $\lambda >0$, $ \gamma \in (0,1)$ and
\begin{equation}  \label{fKTS2} 
(\lambda - A)^{- \gamma } f = \frac {1} { \Gamma (\gamma )} \int _0^\infty  s^{\gamma -1} e^{- \lambda s} e^{ sA} f \, ds
\end{equation} 
for $f\in X$. It follows that 
\begin{equation} \label{fKTS1} 
 \| [ e^{t A}  - I ] (\lambda - A)^{- \gamma }   \| _{ \mathcal L  ( X) } \le \frac {2} {\Gamma (\gamma +1)}  t^\gamma 
\end{equation} 
for all $t\ge 0$. In addition, if $(e^{tA}) _{ t\ge 0 }$ is an analytic semigroup, then there exists a constant $C$ such that
\begin{equation} \label{fKTS3} 
 \| (\lambda - A)^{ \gamma } e^{-t(\lambda -A)} \| _{ \mathcal L  ( X) } \le C t^{-\gamma } 
\end{equation} 
for all $t> 0$.
\end{lem}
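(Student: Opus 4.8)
The plan is to reduce each operator estimate to a scalar integral estimate via the subordination formula \eqref{fKTS2}, handling \eqref{fKTS1} by a monotonicity argument and \eqref{fKTS3} by the product rule for fractional powers together with the analyticity bound. Throughout I use that $\|e^{sA}\|_{\mathcal{L}(X)}\le 1$ and that, since $\lambda>0$ and $A$ generates a contraction semigroup, $\lambda-A$ is closed, invertible, and its fractional powers are well defined.

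For \eqref{fKTS1} I would set $\phi(u)=u^{\gamma-1}e^{-\lambda u}$, so that $(\lambda-A)^{-\gamma}f=\frac{1}{\Gamma(\gamma)}\int_0^\infty\phi(s)\,e^{sA}f\,ds$. Using the semigroup property $e^{tA}e^{sA}=e^{(t+s)A}$ and the change of variable $u=s+t$ in the term produced by $e^{tA}$, I would first establish the identity
\[
[e^{tA}-I](\lambda-A)^{-\gamma}f = \frac{1}{\Gamma(\gamma)}\Bigl(\int_t^\infty [\phi(u-t)-\phi(u)]\,e^{uA}f\,du - \int_0^t \phi(u)\,e^{uA}f\,du\Bigr).
\]
Because $0<\gamma<1$, the weight $\phi$ is nonnegative and decreasing on $(0,\infty)$, so $\phi(u-t)\ge\phi(u)$ for $u>t$; combined with $\|e^{uA}f\|\le\|f\|$ this gives
\[
\|[e^{tA}-I](\lambda-A)^{-\gamma}f\| \le \frac{\|f\|}{\Gamma(\gamma)}\Bigl(\int_t^\infty [\phi(u-t)-\phi(u)]\,du + \int_0^t \phi(u)\,du\Bigr).
\]
A further change of variable shows $\int_t^\infty[\phi(u-t)-\phi(u)]\,du=\int_0^t\phi(u)\,du$, and $\phi(u)\le u^{\gamma-1}$ yields $\int_0^t\phi(u)\,du\le t^\gamma/\gamma$. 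Adding the two equal contributions and recalling $\Gamma(\gamma+1)=\gamma\Gamma(\gamma)$ produces \eqref{fKTS1}.

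For \eqref{fKTS3} I would first observe that $e^{-t(\lambda-A)}=e^{-\lambda t}e^{tA}$ is a bounded analytic semigroup with generator $-(\lambda-A)$, and invoke the product rule $(\lambda-A)^\gamma=(\lambda-A)\,(\lambda-A)^{-(1-\gamma)}$. Applying \eqref{fKTS2} with exponent $1-\gamma\in(0,1)$, commuting $\lambda-A$ through the integral, and using the semigroup property, I obtain
\[
(\lambda-A)^\gamma e^{-t(\lambda-A)} = \frac{1}{\Gamma(1-\gamma)}\int_0^\infty s^{-\gamma}(\lambda-A)e^{-(s+t)(\lambda-A)}\,ds.
\]
Analyticity furnishes a constant $M$ with $\|(\lambda-A)e^{-\sigma(\lambda-A)}\|\le M\sigma^{-1}$ for all $\sigma>0$; inserting this and evaluating the Beta integral
\[
\int_0^\infty s^{-\gamma}(s+t)^{-1}\,ds = t^{-\gamma}\int_0^\infty \sigma^{-\gamma}(1+\sigma)^{-1}\,d\sigma = \Gamma(1-\gamma)\Gamma(\gamma)\,t^{-\gamma}
\]
gives \eqref{fKTS3} with $C=M\Gamma(\gamma)$.

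The scalar estimates are routine, so I expect the main obstacle to be the operator-theoretic bookkeeping. For \eqref{fKTS1} one must verify that the integral defining $(\lambda-A)^{-\gamma}$ converges as a Bochner integral, so that the change of variable and the splitting of the integral are legitimate; the contraction bound controls both the tail and the $s^{\gamma-1}$ singularity at $0$. The genuinely delicate step is in \eqref{fKTS3}: justifying the factorization $(\lambda-A)^\gamma=(\lambda-A)(\lambda-A)^{-(1-\gamma)}$ and passing the \emph{closed, unbounded} operator $\lambda-A$ inside the integral. This is valid because analyticity places the range of $e^{-(s+t)(\lambda-A)}$ in the domain of every power of $\lambda-A$ and because $\lambda-A$ is closed, so that $(\lambda-A)\int=\int(\lambda-A)$; these facts, together with the analyticity estimate itself, are exactly the content of the cited results of Komatsu and may be invoked rather than reproved.
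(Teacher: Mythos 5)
Your proposal is correct and follows essentially the same route as the paper's proof: the same change of variables and splitting into two integrals for \eqref{fKTS1}, with the same monotonicity of $s\mapsto s^{\gamma-1}e^{-\lambda s}$ and telescoping identity giving two contributions each bounded by $t^\gamma/\gamma$, and the same factorization $(\lambda-A)^\gamma=(\lambda-A)(\lambda-A)^{-(1-\gamma)}$ combined with the analyticity bound and the Beta integral for \eqref{fKTS3}. The only (harmless) difference is that you evaluate the final integral exactly as $\Gamma(1-\gamma)\Gamma(\gamma)t^{-\gamma}$, where the paper simply absorbs it into a constant.
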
 

\begin{proof} 
Given $f\in X$,
\begin{equation*} 
\begin{split} 
\Gamma (\gamma ) [e^{t A} &- I ]  (\lambda - A)^{- \gamma }  f   \\  & =
 \int _0^\infty  s^{\gamma -1}  e^{- \lambda s} e^{ (t+ s)A} f \, ds
-   \int _0^\infty  s^{\gamma -1} e^{- \lambda s} e^{ sA} f \, ds \\ &
= 
  \int _t ^\infty  (s-t) ^{\gamma -1}  e^{- \lambda (s-t) }  e^{s A} f \, ds
-   \int _0^\infty  s^{\gamma -1} e^{- \lambda s} e^{ sA} f \, ds \\ &
= 
  \int _t ^\infty  [ (s-t) ^{\gamma -1}  e^{- \lambda (s-t) }  - s^{\gamma -1} e^{- \lambda s} ]  e^{s A} f \, ds
- \int _0^t  s^{\gamma -1} e^{- \lambda s} e^{ sA} f \, ds  \\ & = I_1 + I_2 .
\end{split} 
\end{equation*} 
Clearly,
\begin{equation*} 
 \|I_2 \|_X \le   \| f \|_X \int _0^t   s^{\gamma -1} e^{- \lambda s}  ds \le  \|f\|_X  \frac {  t^\gamma} {\gamma } .
\end{equation*} 
Since $\gamma \le 1$, $ (s-t) ^{\gamma -1}  e^{- \lambda (s-t) }  \ge s^{\gamma -1} e^{- \lambda s} $, so that
\begin{equation*} 
\begin{split} 
 \| I_1 \|_X & \le    \| f \|_X \int _t ^\infty  [ (s-t) ^{\gamma -1}  e^{- \lambda (s-t) }  - s^{\gamma -1} e^{- \lambda s} ] \, ds \\ & =   \| f \|_X  \Bigl(  \int _t ^\infty   (s-t) ^{\gamma -1}  e^{- \lambda (s-t) }   \, ds -  \int _t ^\infty   s^{\gamma -1} e^{- \lambda s}  \, ds \Bigr)
 \\ & =   \| f \|_X  \Bigl(  \int _0 ^\infty  s ^{\gamma -1}  e^{- \lambda s }   \, ds -  \int _t ^\infty   s^{\gamma -1} e^{- \lambda s}  \, ds \Bigr)
  \\ & =   \| f \| _X  \int _0 ^t  s ^{\gamma -1}  e^{- \lambda s }   \, ds  \le \|f\| _X \frac {  t^\gamma} {\gamma } .
\end{split} 
\end{equation*} 
Hence~\eqref{fKTS1} follows. 

We now prove~\eqref{fKTS3}, so we assume in addition that $(e^{tA}) _{ t\ge 0 }$ is an analytic semigroup. In particular, $(e^{-t(\lambda -A)}) _{ t\ge 0 }$ is analytic, so that there exists $C$ such that
\begin{equation} \label{fKTS4} 
\|  (\lambda - A) e^{-t(\lambda -A)} \| _{ \mathcal L  ( X) } \le \frac {C} {t} 
\end{equation} 
for all $t>0$. Given $f\in X$, it follows from~\eqref{fKTS2} that
\begin{equation*} 
(\lambda - A)^{-(1- \gamma ) }f  = \frac {1} { \Gamma (1- \gamma )} \int _0^\infty  s^{- \gamma }  e^{ -s(\lambda -A)} f \, ds .
\end{equation*} 
Replacing $f$ by $ e^{-t(\lambda -A)} f$ with $t>0$, then applying $ \lambda I -A$, we obtain
\begin{equation*} 
\begin{split} 
(\lambda - A)^{ \gamma } e^{-t(\lambda -A)} f &= (\lambda - A) (\lambda - A)^{-(1- \gamma ) } e^{-t(\lambda -A)} f \\ &= \frac {1} { \Gamma (1- \gamma )} \int _0^\infty  s^{- \gamma }   (\lambda - A))  e^{- s (\lambda -A}  e^{-t(\lambda -A)} f \, ds 
 \\ &= \frac {1} { \Gamma (1- \gamma )} \int _0^\infty  s^{- \gamma }   (\lambda - A)  e^{- (s+t) (\lambda -A)}   f \, ds .
\end{split} 
\end{equation*} 
Estimate~\eqref{fKTS4} yields
\begin{equation*} 
 \| (\lambda - A)^{ \gamma } e^{-t(\lambda -A)} f \|_X \le  \frac {C} { \Gamma (1- \gamma )} \int _0^\infty   \frac {s^{- \gamma }} {s+t}ds =  \frac {C t^{-\gamma }} { \Gamma (1- \gamma )} \int _0^\infty   \frac {s^{- \gamma }} {s+1}ds ,
\end{equation*} 
from which~\eqref{fKTS3} easily follows.
\end{proof}

\begin{proof} [Proof of Lemma~$\ref{eRHE1}$]
Given $0\le t\le t+\tau \le T$,
\begin{equation*} 
\begin{split} 
v(t+\tau )- v(t) & =  \int _0^{t+\tau } e^{ (t+ \tau  -s) A} f(s) \, ds -  \int _0^t e^{ (t -s) A} f(s) \, ds \\
& =  (e^{\tau A} - I) \int _0^ t  e^{ (t  -s) A} f(s) \, ds +   \int _0^\tau  e^{s A} f(t+ \tau - s) \, ds
\\ &= I_1 + I_2 .
\end{split} 
\end{equation*} 
Clearly,
\begin{equation*} 
 \| I_2 \| \le  \tau   \| f \| _{ L^\infty ((0,T), X) }. 
\end{equation*} 
Moreover, given $0< \gamma <1$, 
\begin{equation*} 
I_1 =  (e^{\tau A} - I) (I - A)^{-\gamma } \int _0^ t (I - A)^{ \gamma }  e^{ (t  -s) A} f(s) \, ds .
\end{equation*} 
It follows from~\eqref{fKTS1} and~\eqref{fKTS3} that
\begin{equation*} 
\begin{split} 
 \| I_1 \| & \le \frac {2 \tau ^\gamma } {\Gamma ( \gamma +1)} \int _0^t  \| (I - A)^{ \gamma }  e^{ (t  -s) A} f(s)  \| \, ds \\ & \le C \tau ^\gamma   \int _0^t (t-s)^{-\gamma }  \| f(s) \|_X \, ds
  \\ & \le C \tau ^\gamma t^{1-\gamma } \| f \| _{ L^\infty ((0,T), X) }. 
\end{split} 
\end{equation*} 
This completes the proof.
\end{proof} 

\begin{lem} \label{eKTS2} 
Let $X$  be either $ \Cz$ or $\Cbu$. 
If $f\in X$ and $\Delta f\in X$, then $\nabla f\in X$ and
\begin{equation} 
 \| \nabla f \| _{ L^\infty  } \le C(  \| f \| _{ L^\infty  } +  \| \Delta f\| _{ L^\infty  }).
\end{equation} 
\end{lem}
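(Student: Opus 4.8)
The plan is to recover $\nabla f$ from $f$ and $\Delta f$ by exploiting the smoothing of the heat semigroup, which on $X$ is a $C_0$ semigroup of contractions by Lemma~\ref{eHRN1:b1} and for which $e^{t\Delta} g = K(t,\cdot)\star g$. The essential ingredient is the kernel bound $\|\nabla K(t,\cdot)\|_{L^1} = C t^{-\frac12}$, which follows from $\nabla K(t,x) = -\frac{x}{2t}K(t,x)$ by scaling, and which yields
\begin{equation*}
\|\nabla e^{t\Delta} g\|_{L^\infty} = \|(\nabla K(t,\cdot))\star g\|_{L^\infty} \le \|\nabla K(t,\cdot)\|_{L^1}\,\|g\|_{L^\infty} = C t^{-\frac12}\|g\|_{L^\infty}
\end{equation*}
for every $g\in X$.

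First I would establish the identity
\begin{equation*}
f = e^{t\Delta} f - \int_0^t e^{s\Delta}\Delta f\, ds \qquad \text{in } X,\quad t>0.
\end{equation*}
Since $f,\Delta f\in X\subset L^\infty (\R^N )$, an integration by parts against the smooth kernel $K$ gives $\partial_s e^{s\Delta} f = e^{s\Delta}\Delta f$ for $s>0$; integrating from $\varepsilon$ to $t$ and letting $\varepsilon\downarrow 0$, using that $e^{s\Delta} f\to f$ and $e^{s\Delta}\Delta f\to\Delta f$ in $X$ as $s\downarrow 0$ (the semigroup is $C_0$), produces the identity.

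Next I would differentiate this identity in $x$. The term $e^{t\Delta} f$ is smooth with $\nabla e^{t\Delta} f=(\nabla K(t,\cdot))\star f$. For $h(x)=\int_0^t e^{s\Delta}\Delta f(x)\, ds$, each integrand $x\mapsto e^{s\Delta}\Delta f(x)$ is of class $C^1$ with $|\nabla e^{s\Delta}\Delta f(x)|\le C s^{-\frac12}\|\Delta f\|_{L^\infty}$, uniformly in $x$ and integrable on $(0,t)$; the Leibniz rule then shows that $h$ is $C^1$ with $\nabla h=\int_0^t \nabla e^{s\Delta}\Delta f\, ds$. Hence $f$ is $C^1$ and $\nabla f = \nabla e^{t\Delta} f - \int_0^t \nabla e^{s\Delta}\Delta f\, ds$. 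Taking $L^\infty$ norms and applying the smoothing estimate,
\begin{equation*}
\|\nabla f\|_{L^\infty}\le C t^{-\frac12}\|f\|_{L^\infty} + C\Bigl(\int_0^t s^{-\frac12}\,ds\Bigr)\|\Delta f\|_{L^\infty} = C t^{-\frac12}\|f\|_{L^\infty} + 2C t^{\frac12}\|\Delta f\|_{L^\infty},
\end{equation*}
and the choice $t=1$ yields the asserted bound. Finally, to see $\nabla f\in X$, note that convolution with the $L^1$ function $\nabla K(t,\cdot)$ maps $X$ into $X$ (it preserves uniform continuity, and for $\Cz$ also decay at infinity), so $\nabla e^{t\Delta} f\in X$; moreover $s\mapsto\nabla e^{s\Delta}\Delta f$ is continuous from $(0,t]$ into $X$ with norm dominated by the integrable function $C s^{-\frac12}\|\Delta f\|_{L^\infty}$, so its integral is a convergent $X$-valued Bochner integral and lies in $X$.

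I expect the main obstacle to be the rigorous justification of the identity $f = e^{t\Delta} f - \int_0^t e^{s\Delta}\Delta f\, ds$, specifically the step $\partial_s e^{s\Delta} f = e^{s\Delta}\Delta f$: transferring the Laplacian off the kernel and onto $f$ must be carried out in the distributional sense, relying on the hypothesis $\Delta f\in X$ rather than on any a priori regularity of $f$. The differentiation under the integral sign, though elementary, also demands care because of the $s^{-\frac12}$ singularity at $s=0$, which is precisely balanced by the integrability of $s^{-\frac12}$ near the origin.
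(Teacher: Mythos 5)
Your proof is correct and follows essentially the same route as the paper's: both recover $\nabla f$ by representing $f$ through the heat semigroup acting on $f$ and $\Delta f$, and then invoke the kernel bound $\| \nabla K(s,\cdot )\| _{ L^1 } = C s^{-1/2}$. The only difference is cosmetic --- the paper uses the resolvent identity $f = (I-\Delta )^{-1}(f-\Delta f) = \int _0^\infty e^{-s} e^{s\Delta }(f-\Delta f)\,ds$, whereas you use the finite-time identity $f = e^{t\Delta }f - \int _0^t e^{s\Delta }\Delta f\,ds$ with $t=1$; both reduce to the same integrable $s^{-1/2}$ singularity.
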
 

\begin{proof} 
We write 
\begin{equation*} 
f= (I-\Delta ) ^{-1} ( f - \Delta f).
\end{equation*} 
By formula~\eqref{fKTS2}, this means
\begin{equation} 
f =   \int _0^\infty   e^{-  s} e^{ s \Delta } (f - \Delta f ) \, ds
\end{equation} 
Therefore, we need only show that the map
\begin{equation} 
u \mapsto  \int _0^\infty   e^{-  s} \nabla e^{ s \Delta } u \, ds
\end{equation} 
is a bounded operator on $X$. This is clear, since 
\begin{equation} 
\nabla e^{ s \Delta } u = \nabla K ( s, \cdot ) \star u 
\end{equation} 
and the map $s \mapsto  \nabla K ( s, \cdot )$ is continuous $(0,\infty ) \to L^1 (\R^N ) $ with the estimate
\begin{equation} 
 \| \nabla K ( s , \cdot ) \| _{ L^1 } \le s^{- \frac {1} {2}} \pi ^{-\frac {N} {2}} \int  _{ \R^N  }  |x| e^{-  |x|^2} dx 
\end{equation} 
This completes the proof.
\end{proof} 

\begin{proof} [Proof of Theorem~$\ref{eRHE0}$]
We let
\begin{equation*} 
u (t) = e^{t\Delta }\DI + v(t),
\end{equation*} 
where
\begin{equation*} 
v(t)=    \int _0^t e^{ (t -s) \Delta }  |u(s)|^\alpha u(s)  \, ds .
\end{equation*} 
It follows from Lemma~\ref{eRHE1} that $v:[0 ,T] \to X$ is H\"older continuous. 
By analyticity (Lemma~\ref{eHRN1:b1}), $t\mapsto e^{t\Delta }\DI $ is H\"older continuous $[\varepsilon ,T] \to X$ for every $0<\varepsilon < T$. Hence so is $u$, and therefore also $ |u|^\alpha u$. 
Furthermore, $w(t) = u(t+\varepsilon )$ for $0\le t\le T-\varepsilon $ satisfies
\begin{equation} \label{fRHE5} 
w(t)= e^{t\Delta } u(\varepsilon ) + \int _0^t  e^{ (t -s) \Delta }  |w(s)|^\alpha w(s)  \, ds .
\end{equation} 
Since $ |w|^\alpha w$ is H\"older continuous $[0,T-\varepsilon ] \to X$, it follows from~\cite[Chapter~9, Theorem~1.27]{Kato} that  $w\in C^1 ( (0,T-\varepsilon ), X)$, $\Delta w\in C ( (0,T-\varepsilon ), X)$, and $w_t= \Delta w+  |w|^\alpha w$ for all $0<t<T-\varepsilon $.
Since $0<\varepsilon <T$ is arbitrary, this proves the first part of the theorem.

Next, it follows from Lemma~\ref{eKTS2} that $\nabla u \in C ((0, T), X)$, so that $\nabla (  |u|^\alpha u )\in C((0,T), X)$. 
Therefore, we may take the gradient of~\eqref{fRHE5}, and we obtain
\begin{equation} \label{fRHE6} 
\nabla w(t)= e^{t\Delta } \nabla u(\varepsilon ) + \int _0^t  e^{ (t -s) \Delta } \nabla ( |w(s)|^\alpha w(s) ) \, ds .
\end{equation} 
As above, we deduce using Lemma~\ref{eRHE1} and analyticity that $\nabla u$ is H\"older continuous $[\varepsilon ,T] \to X$ for every $0<\varepsilon < T$. Hence so is $\nabla ( |u|^\alpha u) $. 
Still as above, it follows from~\cite[Chapter~9, Theorem~1.27]{Kato} that  $\nabla u \in C^1 ( (0,T ), X)$ and $\Delta \nabla u\in C ( (0,T ), X)$. 
Applying again Lemma~\ref{eKTS2}, we deduce that every space derivative of $u$ of order  two  is in $C((0,T), X)$.
\end{proof}  

We now consider the case of the heat equation on a bounded, smooth domain. 
The following result is standard (see e.g.~\cite[Theorem~15.2]{QuittnerS}), but we give here a rather simple proof similar to the proof of Theorem~\ref{eRHE0}. 

\begin{thm} \label{eRHE2} 
Let $\Omega \subset \R^N $ be a bounded domain with boundary of class $C^{2, \mu }$ for some $\mu >0$,  let $X= \Czo$, and $(e^{t \Delta _\Omega }) _{ t\ge 0 }$ the heat semigroup on $X$. 
Let $\alpha >0$, $\DI \in  X$, $T>0$, and suppose $u\in C( [0,T], X)$ satisfies
\begin{equation}  \label{eRHE2:1} 
u(t)= e^{t \Delta _\Omega } \DI +  \int _0^t e^{ (t -s) \Delta _\Omega }  |u(s)|^\alpha u(s)  \, ds
\end{equation} 
for all $0\le t\le T$. It follows that $u \in C^1 ( (0,T), X)$, $\Delta u \in C ( (0,T), X)$, and $u_t= \Delta u+  |u|^\alpha u$ for all $0<t<T$. 
In addition, given any $\omega \subset \subset \Omega $, it follows that $\nabla u\in C^1 ((0, T), C( \overline{\omega })) $ and $u\in C((0, T), C^2 ( \overline{\omega } ))$.
\end{thm}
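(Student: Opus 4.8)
The plan is to establish the first assertion by transcribing, essentially verbatim, the proof of Theorem~\ref{eRHE0}, and then to obtain the interior regularity by localizing in space and invoking interior parabolic estimates. The one structural point to check for the first part is that $(e^{t\Delta_\Omega})_{t\ge0}$ is a $C_0$ semigroup of contractions on $X=\Czo$ which is moreover analytic: contractivity is recalled in the text, and analyticity follows, exactly as in Lemma~\ref{eHRN1:b1}, from the bound $\sup_{0<t\le1}\|t\Delta_\Omega e^{t\Delta_\Omega}\|_{\mathcal L(X)}<\infty$, itself a consequence of the kernel estimates~\eqref{fHK1}--\eqref{fHK2} (or one may simply cite the standard fact for $C^{2,\mu}$ domains). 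Since $u\in C([0,T],X)$ and $z\mapsto|z|^\alpha z$ is continuous and fixes $0$, we have $|u|^\alpha u\in C([0,T],X)$.

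For the first assertion I would then write $u=e^{t\Delta_\Omega}\DI+v$ with $v$ the Duhamel term. Lemma~\ref{eRHE1} gives that $v$ is H\"older continuous on $[0,T]$, and analyticity gives that $t\mapsto e^{t\Delta_\Omega}\DI$ is H\"older continuous on $[\varepsilon,T]$; hence $u$, and therefore $|u|^\alpha u$, is H\"older continuous $[\varepsilon,T]\to X$. Applying \cite[Chapter~9, Theorem~1.27]{Kato} to $w(t)=u(t+\varepsilon)$, which solves the integral equation with data $u(\varepsilon)$ and H\"older source $|w|^\alpha w$, yields $w\in C^1((0,T-\varepsilon),X)$, $\Delta w\in C((0,T-\varepsilon),X)$ and $w_t=\Delta w+|w|^\alpha w$; letting $\varepsilon\downarrow0$ proves that $u$ is a classical solution, and in particular $u_t=\Delta u+|u|^\alpha u$ with $u_t,\Delta u\in C((0,T),\Czo)$.

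For the interior regularity I would fix $\omega\subset\subset\Omega$ and choose $\omega\subset\subset\omega'\subset\subset\Omega$. The starting observation is that, for each $t$, both $u(t)$ and $\Delta u(t)=u_t(t)-|u(t)|^\alpha u(t)$ lie in $\Czo$, so that interior elliptic estimates (Calder\'on--Zygmund followed by Sobolev embedding) give $\|u(t)\|_{C^{1,\gamma}(\overline{\omega'})}\le C(\|u(t)\|_{L^\infty(\Omega)}+\|\Delta u(t)\|_{L^\infty(\Omega)})$ for every $\gamma<1$; applying this to differences in $t$ shows $\nabla u\in C((0,T),C(\overline{\omega'}))$ and that $u(t)$ is uniformly Lipschitz in $x$ on $\overline{\omega'}$. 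Combining this spatial Lipschitz bound with the H\"older-in-$t$ bound from the first step, $u$ is parabolically H\"older continuous on $[\varepsilon,T]\times\overline{\omega'}$, and since $z\mapsto|z|^\alpha z$ is $C^1$ (hence locally Lipschitz), so is $g:=|u|^\alpha u$. Interior parabolic Schauder estimates applied to $u_t-\Delta u=g$ then give $u\in C^{2+\rho,\,1+\rho/2}_{\mathrm{loc}}$ for some $\rho>0$, whence $u\in C((0,T),C^2(\overline{\omega}))$. To reach $\nabla u\in C^1((0,T),C(\overline{\omega}))$ I would bootstrap once more: each $\partial_k u$ solves $(\partial_k u)_t-\Delta(\partial_k u)=(\alpha+1)|u|^\alpha\partial_k u$, whose right-hand side is again parabolically H\"older continuous on interior cylinders now that $u\in C^{2+\rho,\,1+\rho/2}_{\mathrm{loc}}$, and a second application of interior parabolic Schauder gives $\partial_k u\in C^{2+\rho',\,1+\rho'/2}_{\mathrm{loc}}$ for some $\rho'>0$; in particular $\partial_t\partial_k u$ is continuous on interior cylinders, so $t\mapsto\nabla u(t)$ is $C^1$ into $C(\overline{\omega})$.

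The main obstacle is the passage from the semigroup conclusion $u_t,\Delta u\in C((0,T),\Czo)$ to a setting where Schauder theory can be launched: one must manufacture genuine \emph{parabolic} H\"older continuity of the source $g$, coupling the time-H\"older continuity produced by Lemma~\ref{eRHE1} and analyticity with the spatial regularity produced by the interior elliptic estimates, and then verify that the limited smoothness of $z\mapsto|z|^\alpha z$ (only $C^1$, with derivative $|z|^\alpha$ merely $\min(\alpha,1)$-H\"older) still leaves a positive H\"older exponent at each bootstrap step. Everything else is a routine transcription of the $\R^N$ argument, the restriction to a compactly contained $\omega$ being harmless precisely because no boundary regularity of $\nabla u$ is claimed.
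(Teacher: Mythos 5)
Your proof of the first assertion coincides with the paper's (the paper quotes \cite[Theorem~5]{Stewart} for analyticity of $(e^{t\Delta_\Omega})_{t\ge0}$ on $\Czo$ and then repeats the argument of Theorem~\ref{eRHE0}; your alternative justification of analyticity is also acceptable). For the interior regularity you take a genuinely different route, and your route is correct. The paper stays entirely within the semigroup framework: since $u(t)\in D(\Delta_\Omega)$ for $t>0$, it translates in time so that $\DI\in D(\Delta_\Omega)$ and $u\in C^1([0,T],X)$, writes the Duhamel term as $\int_0^t e^{s\Delta_\Omega}|u(t-s)|^\alpha u(t-s)\,ds$ and differentiates in $t$ so that the derivative falls on the nonlinearity, then reapplies Lemma~\ref{eRHE1} and \cite[Chapter~9, Theorem~1.27]{Kato} to $u'$ to obtain $u\in C^1((0,T),D(\Delta_\Omega))$; Stewart's bound $\|\nabla v\|_{L^\infty}\le C(\|v\|_{L^\infty}+\|\Delta v\|_{L^\infty})$ then gives $\nabla u\in C^1((0,T),Z)$ directly and $\nabla\Delta u=\nabla(u_t-|u|^\alpha u)\in C((0,T),Z)$, after which a single application of interior \emph{elliptic} regularity ($W^{3,p}_{\mathrm{loc}}\hookrightarrow C^2$ for $p>N$) yields $u\in C((0,T),C^2(\overline\omega))$. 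You instead manufacture parabolic H\"older continuity of the source from interior elliptic estimates combined with the time-H\"older continuity of the first step, and run an interior parabolic Schauder bootstrap, differentiating the equation in $x_k$ for the second pass. What the paper's route buys is economy: it never invokes parabolic Schauder theory (consistent with its stated aim of giving a simple semigroup proof), it obtains $\nabla u\in C^1((0,T),C(\overline\omega))$ in one stroke from the gradient bound applied to $u'$, and the only composition it must control is $\tfrac{d}{dt}(|u|^\alpha u)=(\alpha+1)|u|^\alpha u'$ with $u'$ already H\"older in $t$. What your route buys is independence from the specific operator-theoretic input (Stewart's $\|\nabla v\|_{L^\infty}$ estimate, fractional powers), at the cost of importing interior parabolic Schauder estimates for distributional solutions and of tracking the degradation of the H\"older exponent through the merely $\min(\alpha,1)$-H\"older factor $|u|^\alpha$ at each bootstrap step --- a point you correctly identify and which does remain positive, so there is no gap.
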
 

\begin{proof} 
We recall that $(e^{t \Delta _\Omega }) _{ t\ge 0 }$ is an analytic semigroup on $X$, see~\cite[Theorem~5]{Stewart}. 
Therefore, we can argue as in the proof of Theorem~\ref{eRHE0}, which establishes the first part of the result. 
To prove the local regularity, we argue as follows. 
Recall that the domain of the Laplacian on $\Czo$ is $Y= \{ u\in X; \, \Delta u\in X \}$, which we equip with the graph norm (so that $Y$ is a Banach space). We also consider  $Z= C(\Omega ) \cap L^\infty (\Omega ) $ equipped with the sup norm.  It follows from~\cite[Theorem~1]{Stewart} that if $u\in Y$, then $\nabla u\in Z $ and 
\begin{equation} \label{eRHE2:2} 
 \| \nabla u \| _{ L^\infty  } \le C (  \| u \| _{ L^\infty  } +  \| \Delta u\| _{ L^\infty  }).
\end{equation} 
Next, we observe that $u(t) \in Y$ for all $0< t\le T$. Therefore, after possibly a time translation, we may assume that $\DI\in Y$ and $u\in C^1([0, T], X)$. We write $u(t) = w(t) + v (t) $ where $w(t) = e^{t\Delta _\Omega } \DI$ and
\begin{equation}  \label{eRHE2:3} 
v (t)=   \int _0^t e^{ s \Delta _\Omega }  |u(t-s)|^\alpha u(t- s)  \, ds .
\end{equation} 
Since $u\in C^1([0, T], X)$, it follows easily that $ |u|^\alpha u\in C^1([0, T], X)$ and $\frac {d} {dt} (  |u|^\alpha u )= (\alpha +1)  |u|^\alpha \frac {du} {dt}$. Therefore, we may differentiate~\eqref{eRHE2:3} with respect to $t$, and we obtain that 
\begin{equation}  \label{eRHE2:3:b1} 
v'  (t)=  (\alpha +1)  \int _0^t e^{ s \Delta _\Omega }  |u(t-s)|^\alpha u' (t- s)  \, ds .
\end{equation} 
It follows from Lemma~\ref{eRHE1} that $v' :[0 ,T] \to X$ is H\"older continuous. 
Since $\DI\in Y$, $w: [0,T] \to X$ is $C^1$, hence also H\"older continuous. Therefore, so is $u'$, so that  $ |u|^\alpha u'$ is also H\"older continuous. It now follows from~\cite[Chapter~9, Theorem~1.27]{Kato} that  $ v ' \in C^1 ( (0,T ), X)$ and $\Delta v ' \in C ( (0,T ), X)$. By analyticity, the same is true for $w'$, hence for $u'$. 
In particular, $u\in C^1 ((0,T), Y)$, and we deduce from~\eqref{eRHE2:2} that $\nabla u\in C^1((0,T), Z )$. 
Therefore, $\nabla \Delta u=\nabla (u_t-|u|^\alpha u) \in C((0,T), Z)$.
Given $N<p<\infty $, we deduce that $u\in C((0, T), W^{1,p} (\Omega ) )$ and $\Delta u\in C((0, T), W^{1,p} (\Omega ) )$. By local regularity (see e.g.~\cite[Theorem~17.1.3]{Hormander}), it follows that if $\omega \subset \subset \Omega $, then $u\in C((0,T), W^{3, p}  (\omega ) )$, hence $u\in C((0,T), C^2 ( \overline{\omega}  ) )$ since $p> N$. This completes the proof.
\end{proof}

\end{document}